\documentclass[11 pt]{amsart}
\usepackage[T1]{fontenc}\usepackage{amssymb,array, mathtools, amsthm, booktabs, float}

\usepackage[backend=biber,style=alphabetic]{biblatex}
\renewbibmacro*{in:}{%
	\ifentrytype{article}
	{}
	{\printtext{\bibstring{in}\intitlepunct}}}
\DeclareFieldFormat[article]{title}{\mkbibemph{#1}}
\DeclareFieldFormat[article]{journaltitle}{#1}
\DeclareFieldFormat[article]{volume}{\mkbibbold{#1}}
\DeclareFieldFormat[article]{number}{nº~#1}
\renewbibmacro*{volume+number+eid}{%
	\printfield{volume}%
	\setunit*{\addcomma\space}
	\printfield{number}%
	\setunit{\addcomma\space}%
	\printfield{eid}}
\usepackage{hyperref}
\hypersetup{hidelinks}

\usepackage{caption}
\newtheorem{theorem}{Theorem}[section]

\newtheorem{lemma}[theorem]{Lemma}
\theoremstyle{definition}
\newtheorem{definition}[theorem]{Definition}
\theoremstyle{remark}
\newtheorem{remark}[theorem]{Remark}
\theoremstyle{plain}
\newtheorem{proposition}[theorem]{Proposition}
\newtheorem{thmAlpha}{Theorem}

\newtheorem{propAlpha}[thmAlpha]{Proposition}

\DeclareMathOperator{\Lie}{Lie}
\DeclareMathOperator{\ric}{ric}
\DeclareMathOperator{\Ric}{Ric}
\DeclareMathOperator{\Der}{Der}
\DeclareMathOperator{\Id}{Id}
\DeclareMathOperator{\diag}{diag}
\DeclareMathOperator{\ad}{ad}
\DeclareMathOperator{\Tr}{Tr}
\DeclareMathOperator{\vol}{vol}
\DeclareMathOperator{\grad}{grad}
\DeclareMathOperator{\End}{End}

\newcommand{\brc}{\Ric^B_{g,H}}

\numberwithin{equation}{section}
\allowdisplaybreaks

\title{SOLVABLE ALGEBRAIC GENERALIZED RICCI SOLITONS IN LOW DIMENSIONS}
\author{Lorenzo Mucciante}
\address{Dipartimento di Matematica e Informatica ``U.~Dini'' \\ Universit\`a degli Studi di Firenze\\
	 Viale Morgagni 67/A\\ 50134 Firenze\\ Italy\\ }
\email{lorenzo.mucciante@unifi.it}
\subjclass[2020]{53E20, 53C30, 22E25}
\keywords{generalized Ricci flow, generalized Ricci soliton, solvable Lie algebra, homogeneous Riemannian space.}

\begin{document}
	 \begin{abstract}
	We prove new structural results for algebraic generalized Ricci solitons on solvable Lie algebras and we discuss the existence of examples in low dimensions. We establish that all non-trivial solvable algebraic generalized Ricci solitons must be expanding and give necessary conditions for the existence of such solitons on one-dimensional extensions of abelian Lie algebras. Finally, we classify algebraic generalized Ricci solitons on three-dimensional Lie algebras, and algebraic generalized Ricci solitons on four-dimensional solvable unimodular Lie algebras, up to isometry and scaling.
	 \end{abstract}
	\maketitle
	\pagestyle{plain}
	
	\section{Introduction}
Let $M$ be an oriented smooth manifold endowed with a Riemannian metric $g$ and with a closed $3$-form $H$. The pair $(g,H)$ is called a \emph{generalized Riemannian metric} on $M$. It is said to be a \emph{generalized Ricci soliton} (GRS for short) if there are a vector field $X \in \Gamma(TM)$ and a constant $\lambda \in \mathbb{R}$ such that
	\begin{equation*}
		\begin{cases}
			\Ric_g=\lambda g-\frac{1}{2}\mathcal{L}_Xg+\frac{1}{4}H_g^2\\
			\Delta H = 2\lambda H - \mathcal{L}_XH
		\end{cases}
	\end{equation*}
	where $\Delta$ is the Hodge Laplacian induced by the metric and the orientation, while $H_g^2$ is the symmetric (0,2)-tensor defined by
	\begin{equation*}
		H_g^2(Y,Z):=g(\imath_Y H,\imath_Z H).
	\end{equation*}
	
	We will say that a soliton is \emph{expanding} if $\lambda<0$, \emph{shrinking} if $\lambda>0$ and \emph{steady} if $\lambda=0$. Moreover, if $X=\grad(f)$ for some smooth function $f$ we will say that the soliton is of \emph{gradient type}. 
	
	Generalized Ricci solitons are closely related to the \emph{generalized Ricci flow} (GRF for short), a geometric flow that evolves generalized metrics according to
	\begin{equation*}
		\begin{cases}
			\partial_tg_t=-2\Ric^B_{g_t,H_t}\\
			\partial_tH_t=-\Delta_{g_t}H_t
		\end{cases}
	\end{equation*}
	where we set $\Ric^B_{g_t,H_t}=\Ric_{g_t}-\frac{1}{4}H^2_{g_t}$.
	More precisely, a GRF solution starting at a generalized soliton is self-similar, meaning that it evolves only by diffeomorphisms and simultaneous scalings of the pair $( g,H)$. This flow first appeared in the physics literature in \cite{CALLAN1985593} as renormalization group flow. Later, in \cite{OLIYNYK2006441} the authors show that on compact manifolds it can be seen as gradient flow for a certain Schr\"odinger operator. 
	 
	Similarly to the classical case, generalized Ricci solitons are expected as singularity models and asymptotic limits for the GRF. For instance, in \cite{fusi2024homogeneousgeneralizedricciflow} it is proved that on nilmanifolds the flow converges asymptotically to a GRS in a topology that generalizes the usual Cheeger--Gromov, provided that the initial form is harmonic.
	Special solutions to the GRF have been studied in recent works. Spaces with flat Bismut connection are the fixed points of the GRF and in \cite{Podest__2022_A} an example of Bismut-Ricci flat manifold that is non-flat is given (see also \cite{Podest__2022_B}). Also, the $SO(3)$-invariant, steady, gradient GRS analogue of the classical Bryant soliton has been built in \cite{Podest__2025}.
	
	On the other hand, there are many rigidity results on compact GRS that force the 3-form $H$ to vanish. For instance, by employing suitable energy functionals, \cite{STREETS2008900} and \cite{li2024shrinkingsolitonsgeneralizedricci} showed that this vanishing occurs for both expanding and shrinking gradient GRS. Furthermore, all compact steady generalized solitons are necessarily of gradient type, as proved in \cite{garcia2021generalized}. 
	
	An important class of examples is given by invariant generalized metrics on homogeneous Riemannian spaces. If $(M, g)$ is a $G$-homogeneous Riemannian space, we can give a presentation of $M$ as the coset space $M \cong G/K$. Here $G$ is a transitive Lie subgroup of $\operatorname{Iso}(M,g)$ and $K$ its isotropy subgroup at a point $o\in M$. For such spaces we can relate the GRS equation on $M$ to an algebraic equation on the Lie algebra $\mathfrak{g}$ of $G$. More precisely, we say that a metric Lie algebra $(\mathfrak{g}, g)$ endowed with a closed $3$-form $H\in \Lambda^3\mathfrak{g}^*$ is an \emph{algebraic generalized Ricci soliton} if there exist a symmetric derivation $D\in \Der(\mathfrak{g})$ and $\lambda \in \mathbb{R}$, such that
	\begin{equation*}
		\begin{cases}
			\ric-\frac{1}{4}g^{-1}H^2=\lambda\text{Id}+ D\\
			\Delta H=2\lambda H-\rho(D)H.
		\end{cases}
	\end{equation*}
	where 
	\begin{equation*}
		\rho(E)H=-H(E\cdot,\cdot,\cdot)-H(\cdot,E\cdot,\cdot)-H(\cdot,\cdot,E\cdot)
	\end{equation*}
	for every $E\in \End(\mathfrak{g})$, and $\ric$ is the Ricci operator on $\mathfrak{g}$.
	
	This notion was first introduced to characterize the solitons of the generalized bracket flow, a flow of Dorfman brackets on the Courant algebroid $\mathfrak{g}\oplus\mathfrak{g}^*$. For further details on this topic and on generalized homogeneous geometry we refer to the papers \cite{fusi2024homogeneousgeneralizedricciflow} (where the authors also provide a classification of generalized nilsolitons) and \cite{fusi2026homogeneousgeneralizedricciflows}. Other examples of generalized nilsolitons are presented in \cite{chen2026harmonicityexistencealgebraicgeneralized}. In particular, the authors construct a generalized nilsoliton with $H\neq0$ on a Lie algebra admitting no classical Ricci nilsolitons.
	
	 Classical algebraic solitons on solvmanifolds are well understood, see for instance \cite{Jablonski+2015+159+182}, \cite{Lafuente2014}, and \cite{Jablonski_2014}. They have been classified in low dimensions in \cite{Lauret+2011+1+21} and are either Ricci-flat or expanding. This behaviour persists in the generalized case on nilpotent Lie algebras, see \cite{fusi2024homogeneousgeneralizedricciflow}.
	
	In this paper we prove that this actually holds on solvable Lie algebras. The first result is the following
	\begin{thmAlpha}[see Theorem \ref{teoA} below]
		\label{A}
		Let $(\mathfrak{s}=\mathfrak{a}\oplus \mathfrak{n}, \langle \cdot, \cdot \rangle)$ be a solvable metric Lie algebra, where $\mathfrak{n}$ is the nilradical and $\mathfrak{a}$ its orthogonal complement. Assume that $(\langle \cdot, \cdot \rangle, H)$ is an algebraic GRS for $H \in \Lambda^3\mathfrak{s}^*$ with $H\neq 0$. Then it is necessarily expanding.
	\end{thmAlpha}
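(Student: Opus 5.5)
The plan is to extract two scalar identities from the soliton system, one from each equation, and combine them so that $\lambda$ is forced to be negative whenever $H\neq 0$.

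\emph{Step 1: the $H$-equation.} I would pair the second equation with $H$ using the inner product on $\Lambda^3\mathfrak{s}^*$ induced by $\langle\cdot,\cdot\rangle$. Since $H$ is closed,
\[
\langle \Delta H,H\rangle=\|d^*H\|^2\ge 0 .
\]
A direct computation in an orthonormal basis should give
\[
\langle\rho(D)H,H\rangle=-\tfrac12\Tr\bigl(D\,g^{-1}H^2\bigr)
\]
(up to the normalization of the inner product on $3$-forms). Hence
\[
0\le \|d^*H\|^2 = 2\lambda\|H\|^2+\tfrac12\Tr\bigl(D\,g^{-1}H^2\bigr).
\]
Here $g^{-1}H^2$ is positive semidefinite with trace a positive multiple of $\|H\|^2$. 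So it suffices to show that $\Tr(D\,g^{-1}H^2)<0$ when $H\neq 0$, or at least that it is $\le 0$ with equality excluded by a separate argument.

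\emph{Step 2: structure of $D$.} I would use the solvable structure, namely that every derivation of a solvable Lie algebra maps $\mathfrak{s}$ into the nilradical $\mathfrak{n}$. Since $D$ is symmetric, this gives $D\mathfrak{a}=0$ and $D\mathfrak{n}\subseteq\mathfrak{n}$, so $D=0\oplus D|_{\mathfrak n}$. Next I would take the trace of the first equation against $D$:
\[
\Tr(\ric D)-\tfrac14\Tr\bigl(g^{-1}H^2D\bigr)=\lambda\Tr D+\Tr D^2 .
\]
To evaluate $\Tr(\ric D)$, I would use the standard decomposition $\ric=M-\tfrac12 B-S(\ad N)$, where $M$ is the moment map term, $B$ the Killing form and $N$ the mean curvature vector. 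The identity $\Tr(MD)=0$ holds because $D$ is a derivation. The terms $\Tr(BD)$ and $\Tr(S(\ad N)D)$ should be controlled using $D|_{\mathfrak a}=0$ and the fact that $N\in\mathfrak a$.

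\emph{Step 3: combining the identities.} The aim is to rewrite $\Tr(g^{-1}H^2D)$ in terms of $-\Tr D^2$, $-\lambda\Tr D$ and curvature terms of definite sign. Substituting into Step 1 and using the scalar curvature identity $\Tr\ric=\lambda\dim\mathfrak s+\Tr D+\tfrac14\Tr g^{-1}H^2$ should eliminate $\Tr D$, giving an inequality of the form $\lambda\|H\|^2\le -c\,(\text{nonnegative quantity})$. This would force $\lambda\le 0$.

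I expect the main obstacle to be the sign of $\Tr(\ric D)$, equivalently of $\Tr(S(\ad N)D)$, in the non-unimodular case. The first thing I would try is to write $\ad N|_{\mathfrak n}$ as a derivation of $\mathfrak n$ and apply Cauchy–Schwarz with $\Tr D|_{\mathfrak n}$. As a fallback, I would pair the first equation with $\ad N$ as well, in the spirit of the classical proof that solvsolitons are expanding. Finally, the steady case $\lambda=0$ has to be ruled out separately. In the equality case I would expect $D=0$ and $d^*H=0$, so that $\ric=\tfrac14 g^{-1}H^2\ge 0$. On a non-flat solvable metric Lie algebra this should contradict the fact that such algebras have negative scalar curvature, and I would aim to conclude $H=0$ from this.
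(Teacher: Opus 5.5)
Your Step~1 is used in the wrong direction, and the scheme cannot be completed as written. The identity $\langle\Delta H,H\rangle=2\lambda\|H\|^2+c\,\Tr(D\,g^{-1}H^2)$ with $c>0$ is correct. But combined with $\langle\Delta H,H\rangle\ge0$ it gives the \emph{lower} bound $2\lambda\|H\|^2\ge -c\,\Tr(Dg^{-1}H^2)$, so $\Tr(Dg^{-1}H^2)<0$ would force $\lambda>0$. In fact, whenever $\langle\Delta H,H\rangle\ge0$, every expanding soliton with $H\neq0$ has $\Tr(Dg^{-1}H^2)>0$. The paper's example on $\mathfrak{r}_{4,-1/2,-1/2}$, with $H^2=8\diag(1,1,1,0)$ and $D=4\diag(1,1,1,0)$, illustrates this. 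So your sufficient condition can never be met, and Step~3 has no inequality that bounds $\lambda$ from above.

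The $H$-identity is useful only when fed into the Ricci pairing. What is needed there is not to eliminate $\Tr D$ but to complete the square, i.e.\ to pair the first equation with $\ric^B=\lambda\Id+D$ itself. With the paper's normalizations this gives
\[
\Tr\bigl((\ric^B)^2\bigr)+\tfrac{1}{12}\langle\Delta H,H\rangle=\lambda\bigl(\operatorname{scal}-\tfrac{1}{12}\|H\|^2\bigr)-\Tr(\ad_U D).
\]
For unimodular $\mathfrak{s}$, together with Milnor's $\operatorname{scal}\le0$, this does give $\lambda<0$, including the steady case as you anticipated. For non-unimodular $\mathfrak{s}$ it does not close. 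First, $\Tr(\ad_U D)$ is unsigned. Second, even $\langle\Delta H,H\rangle\ge0$ fails, because $\pm\star d\star$ is then not the adjoint of $d$ (they differ by a term involving $\iota_U$). For instance, on $\mathbb{R}A\ltimes\mathbb{R}^3$ with $\ad_A|_{\mathfrak{n}}=\Id$ (real hyperbolic space), the closed form $H=\alpha\wedge X^{12}$ satisfies $\Delta H=-2H$. So your Step~1 claim $\langle\Delta H,H\rangle=\|d^*H\|^2\ge0$ is false there.

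The missing idea, which is the paper's route, is to use $D\mathfrak{a}=0$ (which you correctly derived) directly on $\mathfrak{a}$ rather than through global traces. For a unit vector $A\in\mathfrak{a}$ the first equation gives
\[
\lambda=\langle\ric^BA,A\rangle=-\tfrac12\sum_i\|[A,A_i]\|^2-\Tr\bigl(S(\ad_A|_{\mathfrak{n}})^2\bigr)-\tfrac14\|\iota_AH\|^2\le0 .
\]
So $\lambda\le0$ as soon as $\mathfrak{a}\neq0$, with no need to control $U$ or the sign of $\Delta$. If $\lambda=0$, all three terms vanish. Then $\mathfrak{a}$ is abelian and acts by skew-symmetric derivations, so $U=0$, and $H\in\Lambda^3\mathfrak{n}^*$. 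It follows that $D|_{\mathfrak{n}}=\ric^B_{\mathfrak{n}}$, $\Delta_{\mathfrak{s}}H=\Delta_{\mathfrak{n}}H$ and $\rho(D)H=\rho(D|_{\mathfrak{n}})H$. Hence $(\mathfrak{n},H,D|_{\mathfrak{n}},0)$ would be a steady generalized nilsoliton with $H\neq0$. This is excluded by the nilpotent theorem of Fusi et al., which also covers the case $\mathfrak{a}=0$.
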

\noindent To prove this we will make use of techniques and formulas from real geometric invariant theory, in the framework developed in \cite{Lauret2006}, \cite{Lauret2010}, \cite{Lauret+2011+1+21}.
Using these tools it is also possible to prove structural theorems for GRS on another class of solvable unimodular Lie algebras. Namely, if $\mathfrak{n}$ is abelian and $\mathfrak{a}$ is one-dimensional, the 3-form $H$ lies in $(\mathfrak{a}^*\otimes\Lambda^2\mathfrak{n}^*) \oplus \Lambda^3 \mathfrak{n}^*$. This decomposition allows one to split the soliton equation for $H$ into equations along its components. Our second structural result reads as follows
\begin{propAlpha}[see Propositions \ref{teoB}-\ref{propLaplaciano} below]
	\label{propB}
	Let $(\mathfrak{s}=\mathfrak{a}\oplus \mathfrak{n}, \langle \cdot, \cdot \rangle,H)$ be a solvable unimodular GRS with respect to a derivation $D$, where $\mathfrak{n}$ is the abelian nilradical and the orthogonal complement $\mathfrak{a}=\mathbb{R}A$ is one-dimensional. Write $H=\alpha \wedge \omega+ \eta \in (\mathfrak{a}^*\otimes\Lambda^2\mathfrak{n}^*) \oplus \Lambda^3 \mathfrak{n}^* $, where $\alpha=A^\flat$. Then
	\begin{enumerate}
		\item The form $\eta$ is harmonic and
		\begin{equation*}
			\begin{cases}
				\rho(\ad_A)\rho(\ad_A^T)\omega=2\lambda \omega-\rho(D) \omega\\
				2\lambda\eta=\rho(D)\eta.
			\end{cases}
		\end{equation*}
		\item If $\rho(\ad_A)\omega=0$ (that is, $\omega$ is $\mathfrak{a}$-invariant), then $\ad_A^T \in \Der(\mathfrak{s})$.
	\end{enumerate}
\end{propAlpha}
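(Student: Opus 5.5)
The plan is to write everything in terms of the single endomorphism $M:=\ad_A|_{\mathfrak n}$, normalizing $|A|=1$. Since $\mathfrak n$ is abelian, the bracket of $\mathfrak s$ is entirely determined by $M$, and unimodularity gives $\Tr M=0$.

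\textbf{Step 1: the differential.} Since $[\mathfrak s,\mathfrak s]\subset\mathfrak n$ and $\alpha|_{\mathfrak n}=0$, we have $d\alpha=0$. For $\beta\in\Lambda^k\mathfrak n^*$ one computes $d\beta=\pm\,\alpha\wedge\rho(M)\beta$, with a sign fixed by the convention for $\rho$. Consequently $d(\alpha\wedge\omega)=0$ automatically, while $dH=0$ is equivalent to $\rho(M)\eta=0$.

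\textbf{Step 2: the codifferential and the Laplacian.} Because $\mathfrak s$ is unimodular, the Hodge Laplacian on left-invariant forms coincides with the algebraic one built from the Chevalley--Eilenberg differential and its adjoint $d^*$ with respect to the induced inner product. This is the only point where unimodularity enters.

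Using $\rho(E)^T=\rho(E^T)$ and the orthogonality of $\alpha\wedge\Lambda\mathfrak n^*$ and $\Lambda\mathfrak n^*$, I will obtain:
\begin{itemize}
\item $d^*(\alpha\wedge\omega)=\pm\rho(M^T)\omega\in\Lambda^2\mathfrak n^*$;
\item $d^*\eta=0$, since the image of $d$ lies in $\alpha\wedge\Lambda\mathfrak n^*$, which is orthogonal to $\eta$.
\end{itemize}
Hence $\Delta\eta=d^*d\eta=0$, so $\eta$ is harmonic. Moreover $\Delta(\alpha\wedge\omega)=dd^*(\alpha\wedge\omega)=\alpha\wedge\rho(M)\rho(M^T)\omega$, where the two signs cancel.

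\textbf{Step 3: splitting the soliton equation.} For the right-hand side I use that derivations of a solvable Lie algebra take values in the nilradical. Thus $DA\in\mathfrak n$. Since $D$ is symmetric and preserves $\mathfrak n$, it also preserves $\mathfrak a$, which forces $DA=0$. It follows that $\rho(D)\alpha=0$, so
\[
\rho(D)(\alpha\wedge\omega)=\alpha\wedge\rho(D|_{\mathfrak n})\omega,
\]
and $\rho(D)$ preserves both summands of the decomposition. Comparing the $\mathfrak a^*\otimes\Lambda^2\mathfrak n^*$ and $\Lambda^3\mathfrak n^*$ components of $\Delta H=2\lambda H-\rho(D)H$ then gives exactly the system in (1).

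\textbf{Step 4: part (2).} The endomorphism $\ad_A^T$ kills $A$ and preserves $\mathfrak n$. Since $\mathfrak n$ is abelian, it is a derivation if and only if $[M^T,M]=0$, i.e. $M$ is normal. So the goal is to show $[M,M^T]=0$.

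\begin{itemize}
\item \emph{Ricci operator.} I use the standard formula $\ric|_{\mathfrak n}=\tfrac12[M,M^T]$ for $\mathfrak a\oplus\mathfrak n$ with $\mathfrak n$ abelian and $\Tr M=0$.
\item \emph{Pairing the metric equation.} Pair the first soliton equation, restricted to $\mathfrak n$, with $[M,M^T]$. The $\lambda\,\Id$ term drops because a commutator is traceless. The $D$ term drops because $D$ is a derivation with $DA=0$, hence $[D,M]=0$, so $\Tr(D[M,M^T])=\Tr([D,M]M^T)=0$.
\item \emph{Resulting identity.} This leaves $\tfrac12|[M,M^T]|^2=\tfrac14\Tr\big((g^{-1}H^2)|_{\mathfrak n}[M,M^T]\big)$.
\item \emph{The $H^2$ term.} This term splits into contributions from $\omega$ and from $\eta$. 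Via the moment-map identity $\Tr(\beta^2_g E)=c\,\langle\rho(E)\beta,\beta\rangle$ (with a universal constant $c$, to be pinned down), each contribution becomes a multiple of
\[
\langle\rho([M,M^T])\beta,\beta\rangle=|\rho(M^T)\beta|^2-|\rho(M)\beta|^2 .
\]
\item \emph{Using the hypotheses.} We have $\rho(M)\omega=0$ by assumption and $\rho(M)\eta=0$ by closedness. So the right-hand side has a definite sign.
\end{itemize}

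I expect the main obstacle to be getting all signs and constants right, in particular the sign of $c$ and the exact $\omega$-contribution to $H^2|_{\mathfrak n}$. The argument works precisely if these force the right-hand side to be $\le 0$. In that case $[M,M^T]=0$ follows, and as a by-product $\rho(M^T)\omega=\rho(M^T)\eta=0$. If the sign comes out wrong, I would fall back on also using the $(A,A)$-component of the Ricci equation together with the $\omega$-equation from (1), paired with $\omega$, to control $|\rho(M^T)\omega|^2$.
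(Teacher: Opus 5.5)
Your proposal is correct. For part (1) you follow the paper in substance: both arguments reduce to $d^*(\alpha\wedge\omega)=\rho(\ad_A^T)\omega$ and $d^*\eta=0$, and unimodularity enters through the vanishing of $\rho(\ad_A)$ on top-degree forms of $\mathfrak n$. The paper makes this explicit through the identity $\rho(\ad_A^T)\gamma=\pm\star_{\mathfrak n}\rho(\ad_A)\star_{\mathfrak n}\gamma$. You instead package it as ``Hodge codifferential $=$ algebraic adjoint of $d$'', which turns $d^*\eta=0$ into a one-line orthogonality statement. Take that adjoint with respect to the inner product satisfying $\alpha\wedge\star_{\mathfrak s}\beta=\langle\alpha,\beta\rangle\vol_{\mathfrak s}$, i.e.\ $1/k!$ times the paper's inner product on $\Lambda^k$. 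With the paper's normalization a spurious factor $3$ appears in $\Delta(\alpha\wedge\omega)$.

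For part (2) your route genuinely differs from the paper's.
\begin{itemize}
\item \emph{The paper's route.} It inserts $E=\Id_{\mathfrak n}$ and $E=F_{\mathfrak n}$, with $F=D+\frac14 H^2$, into the moment-map identity $\Tr(M_\mu E)=\frac14\langle\theta(E)\mu,\mu\rangle$. Its Lemma then gives $\lambda\Tr F_{\mathfrak n}+\Tr F_{\mathfrak n}^2\le 0$. The equality case of Cauchy--Schwarz forces $F_{\mathfrak n}=-\lambda\Id$, i.e.\ $\ric_{\mathfrak n}=\frac12[M,M^T]=0$.
\item \emph{Your route.} You pair the $\mathfrak n\mathfrak n$-block of the metric equation directly with $[M,M^T]$. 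This yields $\frac12\|[M,M^T]\|^2=\frac14\Tr(H^2_{\mathfrak n}[M,M^T])$ at once, bypassing both the moment map and the Cauchy--Schwarz step.
\end{itemize}

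The common core of both arguments is the inequality $\Tr(H^2_{\mathfrak n}[M,M^T])\le 0$. This is exactly the paper's Lemma, since there $\langle\theta(H^2_{\mathfrak n})\mu,\mu\rangle=2\Tr(H^2_{\mathfrak n}[\ad_A,\ad_A^T])$. The sign you worried about comes out right, so your fallback is unnecessary:
\begin{itemize}
\item In the paper's normalization, $\Tr(H^2_{\mathfrak n}E)=-\frac13\langle\rho(E)\eta,\eta\rangle-\langle\rho(E)\omega,\omega\rangle$.
\item The mixed $\omega$--$\eta$ terms vanish by orthogonality.
\item Negativity of each constant is forced anyway by testing $E=\Id_{\mathfrak n}$ on each summand: $\rho(\Id_{\mathfrak n})=-k$ on $\Lambda^k\mathfrak n^*$, while each contribution to $H^2_{\mathfrak n}$ is positive semidefinite.
\end{itemize}

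Your argument is shorter and entirely elementary in this setting. The paper's argument follows the moment-map template from Lauret's work on classical solsolitons, which is the natural framework to try beyond abelian $\mathfrak n$ and one-dimensional $\mathfrak a$. Both give the same by-product $\rho(M^T)\omega=\rho(M^T)\eta=0$.
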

    
    In Section \ref{sec_dim3} we give the classification of algebraic GRS in dimension 3. We leave nilpotent Lie algebras out of this treatment since they are treated in \cite{fusi2024homogeneousgeneralizedricciflow}. The only algebraic GRS that we find is $\mathfrak{su}(2)$ endowed with a Bismut-Ricci flat metric, see Theorem \ref{class_3d} below.
    
    In Section \ref{sec_dim4} we give a detailed treatment of algebraic GRS on  4-dimensional, non-nilpotent, solvable unimodular Lie algebras. In particular, Proposition \ref{propB} allows us to give a complete classification of these algebraic GRS, extending the results in \cite{fusi2024homogeneousgeneralizedricciflow} to the solvable case. The result is summarized in the following
    \begin{propAlpha}
    	For every $4$-dimensional, non-nilpotent, solvable unimodular Lie algebra, exactly one of the following holds:
    	\begin{enumerate}
    		\item There exists a unique GRS up to scaling with $H\neq0$;
    		\item There do not exist any GRS with $H\neq0$;
    	\end{enumerate}
    \end{propAlpha}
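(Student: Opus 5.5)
We prove the dichotomy by going through the classification of $4$-dimensional, non-nilpotent, solvable unimodular Lie algebras and treating each family with Proposition~\ref{propB}. Up to isomorphism these algebras are $\mathfrak{r}_2\oplus\mathfrak{r}_2$-type algebras excluded by unimodularity, the semidirect products $\mathfrak{s}=\mathbb{R}A\ltimes_{\ad_A}\mathbb{R}^3$ with $\Tr\ad_A=0$ (the families $\mathfrak{r}_{4,\mu,-1-\mu}$, $\mathfrak{r}'_{4,\mu,-\mu/2}$, $\mathfrak{e}(2)\oplus\mathbb{R}$, $\mathfrak{e}(1,1)\oplus\mathbb{R}$ and similar), and the two algebras with Heisenberg nilradical, $\mathfrak{d}_4$ and $\mathfrak{d}'_{4,0}$. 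The first step is to normalize the metric in each case. Using automorphisms and scaling, we write the metric so that $\mathfrak{a}=\mathbb{R}A$ is orthogonal to $\mathfrak{n}$ and $\ad_A|_{\mathfrak{n}}$ is in a normal form (diagonal, or rotation plus diagonal) with respect to an orthonormal basis, up to a small number of free parameters. Theorem~\ref{A} lets us assume $\lambda<0$ from the start.

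\textbf{Abelian nilradical.} Write $H=\alpha\wedge\omega+\eta$. Since $\dim\mathfrak{n}=3$, $\eta$ is a multiple of the volume form $\vol_{\mathfrak{n}}$. Closedness of $H$ is automatic because $\Lambda^4\mathfrak{s}^*$ is top-degree minus nothing relevant; in fact we only need $d\eta=0$, which holds since $\rho(\ad_A)\vol_{\mathfrak{n}}=-\Tr(\ad_A)\vol_{\mathfrak{n}}=0$.

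Proposition~\ref{propB} gives $2\lambda\eta=\rho(D)\eta=-(\Tr D|_{\mathfrak{n}})\eta$ together with the equation for $\omega$. We identify $\omega\in\Lambda^2\mathfrak{n}^*$ with a vector $w\in\mathfrak{n}$ via the Hodge star. Then $\rho(\ad_A)$ acts as $-(\ad_A^T)$-like operators on $w$, and the $\omega$-equation becomes a linear eigenvalue problem for $w$ involving $\ad_A\ad_A^T$ and $D$.

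The Ricci equation $\ric-\frac14 g^{-1}H^2=\lambda\Id+D$, with $D$ a symmetric derivation, determines $D$ and $\lambda$ explicitly in terms of the parameters of $\ad_A$, $w$, and the coefficient of $\eta$. Symmetric derivations of these algebras form a small, explicitly computable space: $D$ preserves $\mathfrak{n}$ and commutes with $\ad_A|_{\mathfrak{n}}$ modulo the $A$-component.

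We then solve the resulting polynomial system case by case. In each family we expect one of two outcomes. Either the system forces $w=0$ and $\eta=0$, typically because $\ad_A$ is not normal and item (2) of Proposition~\ref{propB} yields a contradiction, or by comparing traces. Or it admits a one-parameter solution set that is exactly the scaling orbit, for instance for $\mathfrak{e}(1,1)\oplus\mathbb{R}$-type or diagonal $\ad_A$ with a specific eigenvalue ratio.

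\textbf{Heisenberg nilradical.} For $\mathfrak{d}_4$ and $\mathfrak{d}'_{4,0}$, Proposition~\ref{propB} is not available, so we compute directly. We write the general closed $3$-form; since $\dim=4$, $H=\star\theta$ for a $1$-form $\theta$, and $dH=0$ means $\theta$ is coclosed. We then compute $\Delta H$ and $H^2$ explicitly and impose both soliton equations with the symmetric derivation $D$.

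\textbf{Uniqueness.} The final step shows that whenever a solution with $H\neq0$ exists, it is unique up to isometry and scaling. The residual freedom consists of orthogonal automorphisms of the normal form and sign changes of $H$ (or of the orientation). We show this freedom acts transitively on the solution set of the polynomial system. Existence is confirmed by exhibiting the explicit solution.

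The main obstacle is the bookkeeping in the non-diagonalizable and complex-eigenvalue families. There the metric has more moduli, and it is hardest to show the polynomial system has no extra solution branches. We would first reduce the metric moduli using Theorem~\ref{A} together with a trace identity: taking the trace of the Ricci equation and pairing with $D$ gives $\Tr D^2=-\lambda\Tr D-\frac14|H|^2$-type relations, which should cut the system down to finitely many cases.
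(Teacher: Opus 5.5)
Your outline has the same architecture as the paper's proof: the eight algebras one at a time, Proposition~\ref{propB} for the six with abelian nilradical, and direct computation for $\mathfrak{d}_4$ and $\mathfrak{d}'_{4,0}$. The gap is that the step carrying all the content is only announced. ``In each family we expect one of two outcomes'' is the dichotomy itself, and ``we show this freedom acts transitively on the solution set'' is the uniqueness claim itself.

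\textbf{What your tools do settle.} They handle the branch $H=\eta$. First, the mixed Ricci entries $\langle\ric^B X_i,X_4\rangle$ give $\eta=0$ as soon as $\omega\neq0$, so the two components never coexist. For $\omega=0$, item (2) of Proposition~\ref{propB} forces $\ad_A$ to be normal. The Ricci equation then gives $\lambda=-\Tr S(\ad_A)^2$ and $D|_{\mathfrak n}$ scalar, and $2\lambda=-\Tr D|_{\mathfrak n}$ fixes $\eta=\pm c\,\vol_{\mathfrak n}$ with $c^2=\frac23\Tr S(\ad_A)^2$. This produces the solitons on $\mathfrak{r}_{4,-1/2,-1/2}$, on $\mathfrak{e}(1,1)\oplus\mathbb{R}$, and on \emph{every} member of $\mathfrak{r}'_{4,-2\mu,\mu}$ and $\mathfrak{r}_{4,\mu,-1-\mu}$, not ``a specific eigenvalue ratio''. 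It also excludes $\mathfrak{r}_{4,-1/2}$, which has no normal representative, and $\mathfrak{e}(2)\oplus\mathbb{R}$, where normal means skew, hence $\lambda=0$, excluded by Theorem~\ref{teoA}.

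\textbf{The missing idea: killing the branch $\omega\neq0$.} When $\omega$ is not $\mathfrak a$-invariant, item (2) says nothing. Your trace identity is a single scalar relation and cannot by itself exclude positive-dimensional solution sets. The paper's device is a sign argument in the first equation of Proposition~\ref{propLaplaciano}. In each metric normal form, let $\kappa$ be the eigenvalue of $2\lambda-\rho(D)$ on $\omega$, for example $2\lambda+2m_{33}$ when $D|_{\mathfrak n}=m_{33}\Id$. Computed from $\langle\ric^B X_4,X_4\rangle$ and $\langle\ric^B X_3,X_3\rangle$, $\kappa$ turns out to be minus a sum of squares of the off-diagonal metric parameters and of some components of $\omega$. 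Since $\rho(\ad_A)\rho(\ad_A^T)\geq0$ and $\rho(\ad_A)\rho(\ad_A^T)\omega=\kappa\,\omega$, one gets $\kappa=0$. This kills those parameters, and the remaining Ricci entries then force $\omega=0$. The leftover branches, where the off-diagonal parameters vanish and $D|_{\mathfrak n}$ need not be scalar, are closed by the explicit $\Delta H$ equation. The two Heisenberg algebras have no such shortcut and must actually be computed.

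\textbf{Your metric normalization is too optimistic,} and the uniqueness statement depends on getting it right. In general $\ad_A|_{\mathfrak n}$ cannot be made diagonal or rotation-plus-diagonal in an orthonormal basis. On $\mathfrak{r}_{4,-1/2}$ it is never normal. On $\mathfrak{r}'_{4,-2\mu,\mu}$, $\mathfrak{e}(2)\oplus\mathbb{R}$ and $\mathfrak{d}'_{4,0}$ there are several isometry classes in which the rotation block is distorted by a parameter in $(0,1)$. What is needed is a complete list of isometry classes of left-invariant metrics, with $\ad_{X_4}$ triangular and carrying off-diagonal parameters; the paper imports this from \cite{VanThuong2017}. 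These non-normal metrics are exactly the ones that feed the $\omega$-branch you left open. Only with such a list does ``unique up to isometry and scaling'' follow from solving the equations, rather than from an unproved transitivity claim.
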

\noindent The explicit GRS are shown in the Propositions \ref{class1}-\ref{class2} below and summarized in Table \ref{table3}.

\vspace{0.5cm}
\noindent	\textbf{Acknowledgements:} This work is part of my PhD project. I am deeply thankful to my PhD advisors Fabio Podestà and Alberto Raffero for suggesting the problem, pointing out inaccuracies in the first draft of this work and for their valuable comments regarding the content and organization of this paper.
	
\noindent The author is supported by the University of Florence and by GNSAGA of INdAM.
	\section{Homogeneous generalized Ricci solitons}
	In this section we recall the derivation of the generalized Ricci soliton equations for homogeneous spaces.
	
	\begin{definition}
		\label{GRS}
		A \emph{generalized Ricci soliton} (shortly GRS) on a smooth manifold $M$ consists of the data $(g,H,X, \lambda)$ where $g$ is a Riemannian metric, $H$ is a closed 3-form,  $X\in \Gamma(TM)$ is a vector field and $\lambda \in \mathbb{R}$ such that
		\begin{equation*}
			\begin{cases}
				\text{Ric}=\lambda g-\frac{1}{2}\mathcal{L}_Xg+\frac{1}{4}H_g^2\\
				\Delta H = 2\lambda H - \mathcal{L}_XH.
			\end{cases}
		\end{equation*}
	\end{definition}
Here $H^2$ is the symmetric $(0,2)$-tensor defined by $H_g^2(Y,Z):=g(\imath_Y H,\imath_Z H)$ and $\Ric$ denotes the Ricci tensor of the Levi-Civita connection of $g$. We shall omit the subscript in $H_g$ when the metric is understood.

Recall that the Bismut connection is the unique metric connection with totally skew-symmetric torsion. It is given by
\begin{equation*}
	\nabla^B:=\nabla+\frac{1}{2}H.
\end{equation*}
We will denote by $\brc=\Ric-\frac{1}{4}H^2$ the symmetric part of its Ricci curvature.

A soliton satisfying \eqref{semi-alg soliton} is said to be \emph{expanding} if $\lambda<0$, \emph{steady} if $\lambda=0$ and \emph{shrinking} if $\lambda >0$. 

We will also say that a GRS is \emph{trivial}, if $\mathcal{L}_Xg=0$, $\mathcal{L}_X H=0$ and $\lambda=0$. In this case, the symmetric part of the Bismut-Ricci tensor is zero, $H$ is harmonic, and $(g,H)$ is Bismut-Ricci flat (BRF) in the sense of \cite{Podest__2022_A}.

	\begin{remark}
		In \cite{garcia2021generalized} a different definition of soliton is given. We remark that such a definition only allows for steady solitons. The definition of soliton that we are adopting is the one given in \cite{Podest__2025,fusi2024homogeneousgeneralizedricciflow}, for which it has been proved  in \cite{fusi2024homogeneousgeneralizedricciflow} that non-trivial expanding examples exist.
	\end{remark}
	We briefly recall how self-similar solutions to the generalized Ricci flow relate to generalized Ricci solitons. 
	
	If $(g_t,H_t)=(c_t\varphi_t^*g_0, c_t\varphi_t^*H_0)$ is a self-similar solution to the GRF for some family $\varphi_t \in \text{Diff}(M)$ and $c_t \in \mathbb{R}$ such that $\varphi_0=\text{Id}$ and $c_0=1$, then:
	\begin{align*}
		-2\Ric^B_{g_0,H_0}=\frac{\partial}{\partial t}g_t|_{t=0}&=c'_0g_0+\mathcal{L}_Xg_0\\
		-\Delta_{g_0}H_0=\frac{\partial}{\partial t}H_t|_{t=0}&=c'_0H_0+\mathcal{L}_XH_0
	\end{align*}
	where $X$ is the vector field on $M$ whose flow is $\varphi_t$. As a result, the pair $(g_0,H_0)$ defines a generalized Ricci soliton.
	
	On the other hand, if $(g,H)$ is a generalized Ricci soliton for a vector field $X$, we recover a self-similar solution to the generalized Ricci flow starting at $(g,H)$ by choosing the simultaneous scaling constant $c_t=1-2\lambda t$, evolving via the flow of the vector field $Y=\frac{1}{c_t}X$.
	
	Let us now turn to homogeneous solitons. A Riemannian manifold $(M,g)$ is said to be homogeneous if  there is a transitive group of isometries $G$. Let $K$ be the isotropy subgroup at a point $o\in M$. Then $M$ can be presented as the coset space $M \cong G/K$, and every automorphism $\Phi$ of $G$ fixing $K$ induces a diffeomorphism $\phi$ of $M$ in the following way:
	\begin{equation*}
		\phi(k\cdot p)=\Phi(k)\cdot p \textbf{ }\textbf{ }\textbf{ }\textbf{ }\forall k\in K,\forall p \in M.
	\end{equation*}
If a self-similar solution to the GRF evolves via diffeomorphisms of $M$ arising in this way, we can give the following definition.
	
	\begin{definition}
		\label{semi-alg GRS}
		A Riemannian $G$-homogeneous space $(M,g_0)$ endowed with a closed, $G$-invariant $3$-form $H_0$ is said to be a \emph{semi-algebraic generalized Ricci soliton} relative to $G$ if it corresponds to a self-similar solution of the GRF evolving via automorphisms of $G$ fixing $K$. More precisely, if there exists a $1$-parameter group $\{\Phi_s\}_{s}$ of automorphisms of $G$ that fix $K$ and constants $c_t$ for which
		\begin{align*}
			g(t)&=c_t\phi_{s(t)}^*g_0\\
			H(t)&=c_t \phi^*_{s(t)}H_0
		\end{align*}
		solves the GRF starting at $(g_0,H_0)$.
	\end{definition}
Let us denote by $\ric$ the Ricci operator  obtained from $\Ric$ after contraction with the metric. Namely,
\begin{equation*}
	\Ric(X,Y)=g(\ric(X),Y) \qquad \forall X,Y \in \Gamma(TM).
\end{equation*}
We shall also write $\ric^B=\ric-\frac{1}{4}g^{-1}H^2$.

If we denote by $\mathfrak{g}$ the Lie algebra of $G$, we see that $GL\mathfrak{(g)}$ acts naturally on the spaces $\Lambda^2\mathfrak{g}^*\otimes \mathfrak{g}$ and $\Lambda^3 \mathfrak{g}^*$ as follows
\begin{align*}
	h\cdot \mu&=g\mu(h^{-1}\cdot, h^{-1}\cdot),\\
	h\cdot \omega&=\omega(h^{-1}\cdot, h^{-1}\cdot , h^{-1}\cdot),
\end{align*}
for every $h\in GL(\mathfrak{g})$.

The corresponding Lie algebra representations are obtained by differentiation of these actions and are given by
\begin{align}
	\label{Lie-alg-rep}
	\theta(E)\mu &:=E\mu-\mu(E\cdot,\cdot)-\mu(\cdot,E\cdot),\\
	\rho(E)\omega&:=-\omega(E\cdot,\cdot,\cdot)-\omega(\cdot,E\cdot,\cdot)-\omega(\cdot,\cdot, E\cdot)\nonumber.
\end{align}
for every $E\in \End(\mathfrak{g})$. These are Lie algebra homomorphisms, namely
\begin{equation}
	\label{Lie-alg-hom}
	\theta([A,B])=[\theta(A),\theta(B)], \qquad  \rho([A,B])=[\rho(A),\rho(B)].
\end{equation}

Clearly semi-algebraic generalized solitons are GRS in the sense of Definition \ref{GRS}, with the advantage that they can be nicely treated in algebraic terms, as shown in the following theorem.
	\begin{theorem}
		Let $(M\cong G/K,g,H)$ be a semi-algebraic generalized Ricci soliton relative to $G$. Then there exists a derivation $D \in \Der(\mathfrak{g})$ that fixes $\mathfrak{k}$ such that on $\mathfrak{g}$
		\begin{equation}
			\label{semi-alg soliton}
			\begin{cases}
				\ric^B=\lambda\Id+ S(D)\\
				\Delta H=2\lambda H-\rho(D)H
			\end{cases}
		\end{equation}
		where $\rho$ is the map on $\End(\mathfrak{g})$ defined in \eqref{Lie-alg-rep}
		and $S(D)=\frac{1}{2}(D+D^T)$.
	\end{theorem}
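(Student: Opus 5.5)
We differentiate the defining self-similar solution at $t=0$ and translate the resulting Lie derivatives into algebraic operations at the origin $o$. By Definition \ref{semi-alg GRS} we have a one-parameter group $\Phi_s$ of automorphisms of $G$ fixing $K$ and constants $c_t$ with $c_0=1$, $s(0)=0$. Put $\tilde D:=\frac{d}{ds}\big|_{s=0} d\Phi_s|_e \in \Der(\mathfrak{g})$; it preserves $\mathfrak{k}$, since $\Phi_s(K)=K$. Let $X$ be the vector field generating the induced diffeomorphisms $\phi_{s(t)}$, and rescale $\tilde D$ by $s'(0)$; note that $s(t)$ may be reparametrized. Computing as in the discussion above, at $t=0$ we get
\[
-2\Ric^B=c_0' g+\mathcal{L}_Xg,\qquad -\Delta H=c_0'H+\mathcal{L}_XH .
\]

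The core of the proof is computing $\mathcal{L}_X g$ and $\mathcal{L}_X H$ at $o$. We identify $T_oM$ with a reductive complement $\mathfrak{p}$ of $\mathfrak{k}$; in the solvable/simply transitive case of interest $K$ is trivial and $\mathfrak{p}=\mathfrak{g}$. Since $\phi_s(o)=o$ and $d\phi_s|_o$ is induced by $d\Phi_s|_e$ on $\mathfrak{g}/\mathfrak{k}$, we have $(\phi_s^*g)_o=g_o(e^{s\tilde D}\cdot,e^{s\tilde D}\cdot)$. Differentiating gives
\[
(\mathcal{L}_Xg)_o=g(\tilde D\cdot,\cdot)+g(\cdot,\tilde D\cdot)=2g(S(\tilde D)\cdot,\cdot).
\]
In the same way $(\mathcal{L}_XH)_o=-\rho(\tilde D)H$, with $\rho$ as in \eqref{Lie-alg-rep}. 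Because $\phi_s$ commutes with the $G$-action up to the automorphism $\Phi_s$, all the tensors involved stay $G$-invariant, so evaluating at $o$ loses no information.

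Substituting and dividing by $-2$, we set $\lambda:=-c_0'/2$ and $D:=-\tfrac12\tilde D$, which is still a derivation fixing $\mathfrak{k}$. The metric equation then reads $\ric^B=\lambda\Id+S(D)$. The 3-form equation becomes $\Delta H=-c_0'H+\rho(\tilde D)H=2\lambda H-2\rho(D)H$, which does not match the claimed form.

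The main obstacle is this bookkeeping of factors. The natural normalization that makes both equations come out right is the one already used in the converse construction: $c_t=1-2\lambda t$ together with $Y=X/c_t$. Under this normalization the generating field is related to the derivation by $\mathcal{L}_X g=2g(S(D)\cdot,\cdot)$ and $\mathcal{L}_XH=\rho(D)H$, with $D=-\tilde D$ up to the chosen sign. I therefore define $D$ by $-\tilde D$ rescaled so that the metric equation reads $\ric^B=\lambda\Id+S(D)$, and then verify carefully that $\rho$ is linear in the same derivation. The sign conventions in \eqref{Lie-alg-rep}, where $\rho(E)$ carries a minus sign, should make the 3-form equation $\Delta H=2\lambda H-\rho(D)H$ consistent with the factor $\tfrac12$ coming from $-2\Ric^B$. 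The delicate part is making the factor in the metric equation (a $\tfrac12$ from symmetrization) agree with the factor in the $H$ equation; I would check this against the Lie-derivative identity $\mathcal{L}_X=-\rho(\tilde D)$ on both tensor types simultaneously.
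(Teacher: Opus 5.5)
Your overall route is the paper's. You differentiate the pullbacks of $g$ and $H$ under the induced diffeomorphisms at $o$, obtaining $(\mathcal{L}_Xg)_o=2\langle S(\tilde D)\cdot,\cdot\rangle$ and $(\mathcal{L}_XH)_o=-\rho(\tilde D)H$, and substitute into the $t=0$ equations. Both Lie-derivative formulas are correct. The proposal still does not prove the theorem, because of an arithmetic slip that you then treat as a genuine obstruction. Dividing $-2\ric^B=c_0'\Id+2S(\tilde D)$ by $-2$ gives $\ric^B=\lambda\Id-S(\tilde D)$, so the derivation must be $D:=-\tilde D$, not $-\tfrac12\tilde D$. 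With your choice the metric equation actually reads $\ric^B=\lambda\Id+2S(D)$, contrary to what you assert. The extra factor $2$ in $\Delta H=2\lambda H-2\rho(D)H$ is that same factor. With $D=-\tilde D$ the $3$-form equation is immediate: $\Delta H=-c_0'H+\rho(\tilde D)H=2\lambda H-\rho(D)H$.

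Your last paragraph does not close the gap and adds confusion. There is no factor $\tfrac12$ ``from symmetrization'' to reconcile. The $2$ in $2S(\tilde D)$ cancels the $-2$ multiplying $\Ric^B$, and the $H$-equation carries no factor on either side. The pair you propose, $\mathcal{L}_Xg=2g(S(D)\cdot,\cdot)$ together with $\mathcal{L}_XH=\rho(D)H$, is also inconsistent. Both Lie derivatives come from the same pullback by $e^{s\tilde D}$. So for $D=-\tilde D$, which is the paper's normalization $(d\Phi_s)_e=\exp(-sD)$, one has $\mathcal{L}_Xg=-2g(S(D)\cdot,\cdot)$ and $\mathcal{L}_XH=\rho(D)H$. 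Your sign in the metric identity would instead produce $\ric^B=\lambda\Id-S(D)$. Verifying that a single derivation satisfies both equations with the stated coefficients is the whole content of this theorem. Ending with ``should make \dots consistent'' and ``I would check this'' therefore leaves it unproved. Replacing your last two paragraphs with the two-line computation above, with $\lambda=-c_0'/2$ and $D=-\tilde D$, completes it.
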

	\begin{proof}
		Let $\Phi_s$ be the 1-parameter group as in Definition \ref{semi-alg GRS}. Then there exists a derivation $D$ of $\mathfrak{g}$ fixing $\mathfrak{k}$ such that $(d\Phi_s)_e=\exp(-sD)$.  Let $\langle \cdot,\cdot\rangle:=g_{o}$ be the $\operatorname{Ad}(K)$-invariant inner product on $\mathfrak{g}/ \mathfrak{k} \cong T_oM$ induced by the invariant Riemannian metric on $M$. Since $(M,g, H)$ is in particular a GRS we have
		\begin{equation*}
			\langle \ric^B Y, Z \rangle=\lambda \langle Y,Z\rangle-\frac{1}{2}X\cdot\langle Y,Z\rangle    
		\end{equation*}
		where $X$ is the vector field on $G$ with flow $\Phi_s$.
		
		As a result
		\begin{equation*}
			\begin{split}
				X \cdot \langle \cdot,\cdot\rangle 
				&= \frac{d}{ds}\bigg|_{s=0}\Phi_s^* \langle \cdot, \cdot \rangle \\
				&= \frac{d}{ds}\bigg|_{s=0}\left\langle \exp(-sD)\cdot, \exp(-sD)\cdot\right\rangle \\
				&= -\langle D\cdot,\cdot\rangle-\langle\cdot,D\cdot \rangle
			\end{split}
		\end{equation*}
		proving that $\ric^B=\lambda \text{Id}+S(D)$.
		
		\noindent In a similar fashion we get \begin{equation*}
			\begin{split}
				\mathcal{L}_X H 
				&= \frac{d}{ds}\bigg|_{s=0}\Phi_s^*H \\
				&= \frac{d}{ds}\bigg|_{s=0}H\big(\exp(-sD)\cdot,\exp(-sD)\cdot, \exp(-sD)\cdot\big) \\
				&= \rho(D)H
			\end{split}
		\end{equation*}
		so that $\Delta H=2\lambda H-\rho(D)H$, proving the second equation in \eqref{semi-alg soliton}.
	\end{proof}
	
	In case the derivation $D$ is symmetric, the GRS equations simplify and we can consider another important class of solitons on Lie algebras.
	\begin{definition}
		A metric Lie algebra $(\mathfrak{g}, \langle \cdot,\cdot \rangle)$, together with a closed $3$-form $H$, a symmetric derivation $D$ and a real number $\lambda$, is said to be an \emph{algebraic} generalized Ricci soliton if
			\begin{equation}
			\label{algebraic soliton}
			\begin{cases}
				\ric^B=\lambda\text{Id}+ D\\
				\Delta H=2\lambda H-\rho(D)H.
			\end{cases}
		\end{equation}
		In this case, we will denote the algebraic GRS by $(\mathfrak{g}, \mu, \langle\cdot, \cdot \rangle, H, D,\lambda)$.
	\end{definition}

	Such solitons arise as self-similar solutions of the generalized bracket flow introduced in \cite{fusi2024homogeneousgeneralizedricciflow}. It is clear that every semi-algebraic generalized Ricci soliton solving \eqref{semi-alg soliton} is algebraic whenever $D$ is symmetric. In this case, we will say that the Riemannian homogeneous space $(M=G/K,g,H)$ is an algebraic generalized Ricci soliton relative to $G$.
	
	\begin{remark}
	These definitions are given in analogy with the classical algebraic and semi-algebraic Ricci solitons. It is worth pointing out that any classical homogeneous Ricci soliton $(M,g)$ is semi-algebraic relative to $\text{Iso}(M,g)$, as shown in \cite{Jablonski+2015+159+182}. Moreover, every homogeneous Ricci soliton admits a transitive group of isometries $G$ for which $\mathfrak{g}=\Lie(G)$ has a structure of algebraic Ricci soliton, see \cite{Lafuente2014, Jablonski_2014}. In particular, every homogeneous Ricci soliton is algebraic relative to some group $G$.
	\end{remark}
	
	\section{Preliminaries on solvable Lie algebras}
	In this section we recall some useful formulas and facts about solvable metric Lie algebras.
	
	For a given solvable metric Lie algebra $\mathfrak{s}$ we will use the following notation:
	\begin{itemize}
		\item $\mu \in \Lambda^2\mathfrak{s}^*\otimes \mathfrak{s}$ is the Lie bracket of $\mathfrak{s}$.
		\item $\mathfrak{n}$ is the \emph{nilradical} of $\mathfrak{s}$, that is, the unique maximal nilpotent ideal of $\mathfrak{s}$. Since $\mathfrak{n}$ is an ideal, the restriction of $\mu$ to $\mathfrak{n} \times \mathfrak{n}$ is a Lie bracket. We will denote it by $\mu_{\mathfrak{n}}$;
		\item $\mathfrak{a}$ is the orthogonal complement to $\mathfrak{n}$ so that $\mathfrak{s}=\mathfrak{a} \oplus \mathfrak{n}$;
		\item $U$ is the mean curvature vector of $\mathfrak{s}$ defined by
		\begin{equation*}
			\langle U, X \rangle = \Tr(\ad_X) 
		\end{equation*}
		for all $X \in \mathfrak{s}$. 
		Note that $\Tr(\ad_X)=0$ for every $X \in \mathfrak{n}$, meaning that $U \in \mathfrak{a}$ by orthogonality; 
		\item $B_\mu$ is the Killing endomorphism of $(\mathfrak{s}, \mu)$ defined by
		\begin{equation*}
			\langle B_\mu X, Y \rangle = \Tr(\ad_X\ad_Y).
		\end{equation*}
		for all $X,Y \in \mathfrak{s}$. Again, since $\Tr(\ad_X)=0$ for every $X \in \mathfrak{n}$, $B_\mu |_\mathfrak{n}=0$ and $B_\mu \in \End(\mathfrak{a})$;
		\item $M_\mu$ is the endomorphism of $(\mathfrak{s}, \mu)$ defined, using an orthonormal basis $\{X_i\} \subset \mathfrak{s}$, by:
		\begin{equation*}
			\langle M_\mu X, X \rangle = - \frac{1}{2} \sum \langle \mu(X, X_i), X_j \rangle^2 + \frac{1}{4} \sum \langle \mu(X_i, X_j), X \rangle^2
		\end{equation*}
		for all $X \in \mathfrak{s}$.
		\item  $d_\mathfrak{s}$ is the Chevalley-Eilenberg differential of $\mathfrak{s}$. It is defined for every $\omega \in \Lambda^k\mathfrak{s}^*$ by:
		\begin{equation*}
			d_\mathfrak{s}\omega(X_0,\dots,X_k)=\sum_{\substack{i<j\\ i=0}}^k(-1)^{i+j}\omega(\mu(X_i,X_j), X_0, \dots, \hat{X_i}, \dots, \hat{X_j},\dots, X_k) \qquad \forall X_0,\dots,X_k \in \mathfrak{s}.
		\end{equation*} 
		\item $d_\mathfrak{n}$ is the Chevalley-Eilenberg differential on the nilradical $\mathfrak{n}$. It is defined analogously to $d_\mathfrak{s}$, for every differential form in $\Lambda^k\mathfrak{n}^*$. 
	\end{itemize}
	\begin{remark}
		The map $\mu \mapsto M_\mu$ is the moment map for the action of $GL(\mathfrak{s})$ on the space of Lie brackets on $\mathfrak{s}$. A more conceptual definition can be found in \cite{Lauret2006}. There, it is also proved that the moment map may be alternatively defined via the identity
		\begin{equation}
			\Tr(M_\mu A):=\frac{1}{4}\langle\theta(A)\mu,\mu\rangle.
			\label{moment_map}
		\end{equation}
		for every $A \in \End(\mathfrak{s})$.
		We also remark that in \cite{fusi2024homogeneousgeneralizedricciflow} the authors give a generalized formulation of the moment map for the action of the Lie group $\Lambda^2\mathfrak{s}^* \rtimes GL(\mathfrak{s}) $ on the space of Dorfman brackets on $\mathfrak{s} \oplus \mathfrak{s}^*$. 
	\end{remark}

	The inner product on $\mathfrak{s}$ induces an $O(n)$-invariant inner product on $\Lambda^2\mathfrak{s}^*\otimes \mathfrak{s}$ that we still denote by $\langle\cdot, \cdot \rangle $ and is defined by
	\begin{equation*}
		\langle\mu, \nu \rangle=\sum_{i,j} \langle \mu(V_i, V_j), \nu(V_i, V_j) \rangle, \qquad \forall \mu,\nu \in \Lambda^2\mathfrak{s}^*\otimes \mathfrak{s}
	\end{equation*}
	where $\{V_i\}$ is an orthonormal basis of $\mathfrak{s}$.
	Similarly, we have an Ad$(O(n))$-invariant inner product on $\Lambda^k \mathfrak{s}^*$ defined as follows
	\begin{equation*}
		\langle \alpha, \beta \rangle=\sum_{i_1,\dots,i_k}  \alpha(V_{i_1}, \dots, V_{i_k}) \beta(V_{i_1}, \dots, V_{i_k}), \qquad \forall \alpha,\beta \in \Lambda^k\mathfrak{s}^*.
	\end{equation*}
	With respect to these inner products, the representations in \eqref{Lie-alg-rep} satisfy
	\begin{equation}
		\label{rep-trasposte}
		\theta(E)^T=\theta(E^T), \qquad \rho(A)^T=\rho(A^T).
	\end{equation}

	Now fix orthonormal bases $\{A_i\}_{i=1}^a$ for $\mathfrak{a}$ and $\{X_i\}_{i=1}^n$ for $\mathfrak{n}$. The volume forms of $\mathfrak{n},\mathfrak{a},\mathfrak{s}$ are given, respectively, by:
	\begin{equation*}
		\vol_\mathfrak{n}:=X^1\wedge \dots \wedge X^n, \qquad \vol_\mathfrak{a}:=A^1\wedge \dots \wedge A^a, \qquad \vol_\mathfrak{s}=\vol_\mathfrak{n}\wedge \vol_\mathfrak{a}. 
	\end{equation*} 
	Set $s:=a+n$. The \emph{Hodge star operator} $\star_\mathfrak{s} \colon \Lambda^k \mathfrak{s}^* \to \Lambda^{s-k}\mathfrak{s}^*$ on the metric Lie algebra $(\mathfrak{s}, \langle\cdot,\cdot\rangle)$ is defined by
	\begin{equation*}
		 \alpha \wedge \star_\mathfrak{s} \beta= \frac{1}{k!}\langle \alpha, \beta\rangle\vol_\mathfrak{s}, \qquad \forall \alpha,\beta \in \Lambda^{k}\mathfrak{s}^*.
	\end{equation*}
	This operator does not depend on the chosen basis, but only on the fixed volume form and the metric on the Lie algebra. Moreover, it satisfies
	\begin{equation}
		\star_\mathfrak{s}^2=(-1)^{k(s-k)}\Id_{\Lambda^k\mathfrak{s}^*}.
	\end{equation}
	
	The \emph{codifferential} $d^*_\mathfrak{s}$  is given on $k$-forms by
	\begin{equation*}
		d_\mathfrak{s}^*\omega=(-1)^{s(k-1)+1}\star_\mathfrak{s}d_\mathfrak{s}\star_\mathfrak{s}\omega.
	\end{equation*}
	
	The \emph{Laplacian} $\Delta_\mathfrak{s}$ is the self-adjoint operator on $\Lambda^k \mathfrak{s}^*$ defined by 
	\begin{equation*}
	\Delta_\mathfrak{s}=d_\mathfrak{s}d_\mathfrak{s}^*+d_\mathfrak{s}^*d_\mathfrak{s}.
	\end{equation*}
	
	In the following sections we will denote by $\star_\mathfrak{n}, d_\mathfrak{n}^*, \Delta_\mathfrak{n}$ respectively the Hodge star, codifferential and Laplacian of the nilradical $\mathfrak{n}$.

	\section{Solvable generalized Ricci solitons}
	In this section, motivated by the work \cite{Lauret+2011+1+21} on classical solsolitons, we prove some structural properties of generalized Ricci solitons on solvable Lie algebras. From now on, $(\mathfrak{s}, \mu,  \langle\cdot,\cdot\rangle)$ will denote a solvable metric Lie algebra with Lie bracket $\mu$.
    
We recall the formula for the Ricci curvature of left-invariant metrics, from \cite{besse2007einstein}:
\begin{equation*}
	\ric=M_\mu-\frac{1}{2}B_\mu-S(\ad_U).
\end{equation*} 
Then 
\begin{equation}
	\label{Gen ricci curvature}
	\ric^B=M_\mu-\frac{1}{2}B_\mu-S(\ad_U)-\frac{1}{4}H^2
\end{equation}
where we are still writing $H^2$ to denote the endomorphism of $\mathfrak{s}$ obtained by contracting the symmetric form $H^2$ via the inner product.

We can use this expression to obtain formulas similar to \cite[(25)]{Lauret+2011+1+21}. Fix orthonormal bases $\{A_i\}$ for $\mathfrak{a}$ and $\{X_i\}$ for $ \mathfrak{n}$. Then, for any $A\in \mathfrak{a}$ and $X \in \mathfrak{n}$ 
\begin{align}
	\langle \ric^B\, A, A \rangle &= - \frac{1}{2} \sum \| \mu(A, A_i) \|^2 - \Tr \, \left(S(\ad_A|_{\mathfrak{n}})\right)^2-\frac{1}{4}\langle H^2A,A\rangle,\nonumber \\
	\label{eq:lauret formulas} 
	\langle \ric^B\, A, X \rangle &= - \frac{1}{2}\sum \langle \mu(A, A_i), \mu(X, A_i) \rangle - \frac{1}{2} \Tr \, (\ad_A|_{\mathfrak{n}})^T \ad_X|_{\mathfrak{n}}+\nonumber\\
	& - \frac{1}{2} \langle \mu(U, A), X \rangle-\frac{1}{4}\langle H^2A,X\rangle,  \\
	\langle \ric^B\, X, X \rangle &= \frac{1}{4} \sum\langle \mu(A_i, A_j), X \rangle^2 + \frac{1}{2}\sum \langle \left[\ad_{A_i}|_{\mathfrak{n}}, (\ad_ {A_i}|_{\mathfrak{n}})^T\right](X), X \rangle+ \nonumber\\ 
	& - \frac{1}{2}\sum \langle \mu(X, X_i), X_j \rangle^2 + \frac{1}{4} \sum\langle \mu(X_i, X_j), X \rangle^2+\nonumber\\
	& - \langle \mu(U, X), X \rangle-\frac{1}{4}\langle H^2X,X \rangle \nonumber.
\end{align}
In a similar fashion to \cite[Proposition 4.6]{Lauret+2011+1+21} we are able to prove a first structural result for generalized Ricci solsolitons. We start with a technical result regarding the behaviour of closed forms under the orthogonal decomposition $\mathfrak{s}=\mathfrak{a} \oplus \mathfrak{n}$ of the solvable Lie algebra.
\begin{lemma}
	\label{lemma1}
       Let $(\mathfrak{s}=\mathfrak{a} \oplus \mathfrak{n}, \langle \cdot,\cdot\rangle)$ be the orthogonal decomposition of a solvable metric Lie algebra with nilradical $\mathfrak{n}$. Let $H\in \Lambda^k\mathfrak{n}^*$ be a closed $k$-form on $\mathfrak{s}$. Assume that $\mathfrak{a}$ is abelian and acts on $\mathfrak{n}$ via skew-symmetric derivations.
       Then
       \begin{equation*}
       	d_{\mathfrak{s}} \star_{\mathfrak{n}} H=d_{\mathfrak{n}} \star_{\mathfrak{n}}H.
       \end{equation*}
\end{lemma}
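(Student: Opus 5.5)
The plan is to decompose the Chevalley--Eilenberg differential $d_{\mathfrak{s}}$ on forms that live on $\mathfrak{n}$, and then to use that skew-symmetric endomorphisms act on $\Lambda^\bullet\mathfrak{n}^*$ compatibly with $\star_{\mathfrak{n}}$. Throughout, a form $\beta\in\Lambda^m\mathfrak{n}^*$ is regarded as a form on $\mathfrak{s}$ vanishing whenever one argument lies in $\mathfrak{a}$.

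\textbf{Step 1: a decomposition formula.} I will show that for every $\beta\in\Lambda^m\mathfrak{n}^*$,
\begin{equation*}
d_{\mathfrak{s}}\beta = d_{\mathfrak{n}}\beta + \sum_{i} A^i\wedge \rho(\ad_{A_i}|_{\mathfrak{n}})\beta ,
\end{equation*}
up to a universal sign convention in the second term.

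To verify this, evaluate $d_{\mathfrak{s}}\beta$ on $m+1$ arguments, with each argument either in $\mathfrak{a}$ or in $\mathfrak{n}$.
\begin{itemize}
\item If all arguments lie in $\mathfrak{n}$, then every bracket lies in $\mathfrak{n}$, because $\mathfrak{n}$ is an ideal. So we get exactly $d_{\mathfrak{n}}\beta$.
\item If two or more arguments lie in $\mathfrak{a}$, every term vanishes. Indeed, each term either still contains an $\mathfrak{a}$-argument, on which $\beta$ vanishes, or contains $\mu(A_i,A_j)=0$, since $\mathfrak{a}$ is abelian.
\item If exactly one argument is some $A_i$, the only surviving terms are $\pm\beta(Y_1,\dots,\ad_{A_i}Y_j,\dots,Y_m)$. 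Summing them gives $\pm(\rho(\ad_{A_i}|_{\mathfrak{n}})\beta)(Y_1,\dots,Y_m)$.
\end{itemize}
The two summands of the formula lie in the complementary subspaces $\Lambda^{m+1}\mathfrak{n}^*$ and $\mathfrak{a}^*\otimes\Lambda^m\mathfrak{n}^*$ of $\Lambda^{m+1}\mathfrak{s}^*$.

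\textbf{Step 2: use closedness of $H$.} Apply Step 1 to $\beta=H$. Since the two components are independent, $d_{\mathfrak{s}}H=0$ forces
\begin{equation*}
d_{\mathfrak{n}}H=0 \qquad\text{and}\qquad \rho(\ad_{A_i}|_{\mathfrak{n}})H=0 \quad\text{for every } i .
\end{equation*}

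\textbf{Step 3: $\rho$ commutes with $\star_{\mathfrak{n}}$.} By hypothesis each $E_i:=\ad_{A_i}|_{\mathfrak{n}}$ is skew-symmetric, so $\exp(tE_i)$ is a curve in $SO(\mathfrak{n})$. Such maps preserve the inner product on $\Lambda^\bullet\mathfrak{n}^*$ and the volume form $\vol_{\mathfrak{n}}$, hence commute with $\star_{\mathfrak{n}}$ by its defining identity. Differentiating at $t=0$ gives
\begin{equation*}
\rho(E_i)\star_{\mathfrak{n}} = \star_{\mathfrak{n}}\rho(E_i).
\end{equation*}

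\textbf{Step 4: conclude.} Apply Step 1 to $\beta=\star_{\mathfrak{n}}H\in\Lambda^{n-k}\mathfrak{n}^*$. By Steps 3 and 2,
\begin{equation*}
\rho(E_i)\star_{\mathfrak{n}}H=\star_{\mathfrak{n}}\rho(E_i)H=0 ,
\end{equation*}
so the mixed part vanishes and $d_{\mathfrak{s}}\star_{\mathfrak{n}}H=d_{\mathfrak{n}}\star_{\mathfrak{n}}H$.

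The main point needing care is Step 1. The signs there are harmless, since we only need that the mixed component is a nonzero multiple of $\rho(\ad_{A_i})\beta$. What must be checked is that no other terms survive, and this is exactly where the hypotheses that $\mathfrak{a}$ is abelian and $\mathfrak{n}$ is an ideal enter. Step 3 is the only place where skew-symmetry is used, and it is essential: for a non-skew derivation, $\rho$ does not commute with $\star_{\mathfrak{n}}$.
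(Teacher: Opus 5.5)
Your proposal is correct and follows essentially the same route as the paper: split $d_{\mathfrak{s}}$ on $\mathfrak{n}$-forms as $d_{\mathfrak{n}}+\sum_i\alpha^i\wedge\rho(\ad_{A_i})$, commute $\rho(\ad_{A_i})$ past $\star_{\mathfrak{n}}$ using skew-symmetry, and use closedness of $H$ to get $\rho(\ad_{A_i})H=0$. The only cosmetic difference is that the paper gets $\rho(\ad_A)H=0$ from Cartan's formula $\mathcal{L}_AH=d_{\mathfrak{s}}\iota_AH+\iota_Ad_{\mathfrak{s}}H$ with $\iota_AH=0$, whereas you read it off the mixed component of your Step~1 applied to $H$. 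You also spell out why $\star_{\mathfrak{n}}$ commutes with $\rho(E)$ (by differentiating $SO(\mathfrak{n})$-equivariance), which the paper simply asserts.
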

\begin{proof}
	 Since $\star_{\mathfrak{n}} H$ is an alternating form on $\mathfrak{n}$ and $\mathfrak{a}$ is abelian, we have
	 \begin{equation*}
	 	d_{\mathfrak{s}} \star_{\mathfrak{n}} H=d_{\mathfrak{n}} \star_{\mathfrak{n}}H +\sum \alpha^i \wedge \rho(\ad_{A_i})\star_{\mathfrak{n}}H
	 \end{equation*}for any orthonormal basis $\{A_i\} \subset \mathfrak{a}$ with dual basis $\{\alpha^i\} \subset \mathfrak{a}^*$. 
	 By the skew-symmetry of $\ad_A$ we have that $\ad_A\in \mathfrak{so}(\mathfrak{n},\langle\cdot,\cdot\rangle)$. In particular, $\rho(\ad_A)\star_{\mathfrak{n}}= \star_{\mathfrak{n}}\rho(\ad_A)$.
	 
     But now since $H$ is a $d_\mathfrak{s}$-closed form on $\mathfrak{n}$ we can conclude that \begin{equation*}
	\rho(\ad_A)H=\mathcal{L}_A H=d_\mathfrak{s}\iota_AH=0
\end{equation*}
and the thesis follows.
\end{proof}

We can now prove the first structural result.
\begin{theorem}[Theorem \ref{A}]
	\label{teoA}
	Let $(\mathfrak{s}=\mathfrak{a} \oplus \mathfrak{n}, \mu ,\langle \cdot, \cdot \rangle, H,D,\lambda)$ be a solvable algebraic generalized Ricci soliton with $H\neq 0$. Then $\lambda<0$.
\end{theorem}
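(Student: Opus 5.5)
The plan is to test the first soliton equation on $\mathfrak{a}$ to get $\lambda\le 0$, and then to rule out $\lambda=0$ when $H\neq 0$ using the rigid structure that equality forces.

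\textbf{Step 1: $\lambda\le 0$.} I will use the classical fact that every derivation of a solvable Lie algebra maps it into its nilradical, so $D(\mathfrak{s})\subset\mathfrak{n}$. Since $D$ is symmetric and $\mathfrak{a}\perp\mathfrak{n}$, this gives $\langle DA,A\rangle=0$ for every $A\in\mathfrak{a}$. The first equation in \eqref{algebraic soliton}, together with \eqref{eq:lauret formulas}, then yields
\begin{equation*}
\lambda|A|^2=\langle \ric^B A,A\rangle=-\tfrac12\sum\|\mu(A,A_i)\|^2-\Tr\bigl(S(\ad_A|_{\mathfrak{n}})\bigr)^2-\tfrac14|\iota_AH|^2\le 0 .
\end{equation*}
If $\mathfrak{a}=0$ (the nilpotent case), I would instead take the trace of the first equation against $D$ and argue as in Step 3 below, or simply invoke the nilpotent result of \cite{fusi2024homogeneousgeneralizedricciflow}.

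\textbf{Step 2: structure when $\lambda=0$.} Suppose $\lambda=0$. Then every term in the display above vanishes, which forces three things: $\mathfrak{a}$ is abelian, each $\ad_A|_{\mathfrak{n}}$ is skew-symmetric, and $\iota_AH=0$ for all $A\in\mathfrak{a}$, so that $H\in\Lambda^3\mathfrak{n}^*$. In particular $U=0$, because skew maps are traceless. Moreover $\Tr(B_\mu D)=0$, since $B_\mu$ lives on $\mathfrak{a}$ while $\langle DA_i,A_i\rangle=0$.

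\textbf{Step 3: $D=0$ and $H$ is coclosed.} I pair the first equation with $D$. Because $D$ is a derivation, $\theta(D)\mu=0$, so \eqref{moment_map} gives $\Tr(M_\mu D)=0$. Hence
\begin{equation*}
\Tr D^2=-\tfrac14\Tr(H^2D).
\end{equation*}
A direct computation gives $\langle\rho(D)H,H\rangle=-c\,\Tr(DH^2)$ for a positive constant $c$ fixed by the normalisations. Pairing the second equation with $H$ and using $dH=0$ gives
\begin{equation*}
|d^*H|^2=\langle\Delta H,H\rangle=-\langle\rho(D)H,H\rangle=c\,\Tr(DH^2).
\end{equation*}
Combining the two, $\Tr D^2=-\tfrac{1}{4c}|d^*H|^2\le 0$. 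Therefore $D=0$ and $d^*_{\mathfrak{s}}H=0$.

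\textbf{Step 4: contradiction with $H\neq 0$.} With $D=0$ and $\lambda=0$ we have $\ric^B=0$. I would use Lemma \ref{lemma1} together with $\star_{\mathfrak{s}}H=\pm\star_{\mathfrak{n}}H\wedge\vol_{\mathfrak{a}}$ to deduce that $H$ is $d_{\mathfrak{n}}$-closed and $d_{\mathfrak{n}}$-coclosed on $\mathfrak{n}$.

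Next I take the trace of the third formula in \eqref{eq:lauret formulas} over an orthonormal basis of $\mathfrak{n}$. The $\mu(A_i,A_j)$ terms vanish because $\mathfrak{a}$ is abelian, the commutator terms vanish because $\ad_{A_i}|_{\mathfrak{n}}$ is skew, and the $U$-term vanishes because $U=0$. This leaves
\begin{equation*}
0=\Tr_{\mathfrak{n}}\ric^B=-\tfrac14|\mu_{\mathfrak{n}}|^2-\tfrac14\Tr_{\mathfrak{n}}H^2,
\end{equation*}
and since $\Tr_{\mathfrak{n}}H^2$ is a positive multiple of $|H|^2$, this forces $H=0$, a contradiction. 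Hence $\lambda<0$.

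The main obstacle I expect is Step 3: fixing the sign and constant in $\langle\rho(D)H,H\rangle=-c\Tr(DH^2)$ and checking that $\Tr(M_\mu D)$ and $\Tr(B_\mu D)$ really drop out. I will also verify that $\Tr(S(\ad_U)D)$ causes no trouble; it vanishes here because $U=0$.
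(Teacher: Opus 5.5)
Your proof is correct. Steps 1--2 match the paper's; after that you take a genuinely different route.

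\textbf{How the paper finishes.} It stays on the nilradical. It reads off $D|_{\mathfrak n}=\ric^B_{\mathfrak n}$ from the third formula in \eqref{eq:lauret formulas}. It then uses Lemma \ref{lemma1} to show $d^*_{\mathfrak s}H=d^*_{\mathfrak n}H$ and $\Delta_{\mathfrak s}H=\Delta_{\mathfrak n}H$. So $(\mathfrak n,\mu_{\mathfrak n},H,D|_{\mathfrak n},0)$ would be a steady generalized nilsoliton with $H\neq 0$, which it rules out by citing \cite[Proposition 7.7]{fusi2024homogeneousgeneralizedricciflow}.

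\textbf{How you finish.} You close the argument directly on $\mathfrak s$:
\begin{itemize}
\item the moment map identity kills $\Tr(M_\mu D)$;
\item $D|_{\mathfrak a}=0$ kills $\Tr(B_\mu D)$;
\item $U=0$ kills the $\ad_U$ term;
\item with $c=3$ (consistent with the computation in Lemma \ref{Lemma}) you get $\Tr D^2=-\tfrac14\Tr(DH^2)=-\tfrac1{12}\langle\Delta H,H\rangle\le 0$.
\end{itemize}
Hence $D=0$ and $\ric^B=0$, and the trace over $\mathfrak n$ gives $|\mu_{\mathfrak n}|^2+|H|^2=0$.

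\textbf{What each route buys.} Yours makes the case $\dim\mathfrak a>0$ self-contained: no comparison of Laplacians on $\mathfrak s$ and $\mathfrak n$, and no appeal to the nilpotent result. You also get $D=0$ and $d^*H=0$ along the way. The paper's route is shorter given the citation. It also records the structural fact that a steady soliton would restrict to a steady nilsoliton on the nilradical.

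\textbf{Three small remarks.}
\begin{enumerate}
\item The inequality $\langle\Delta H,H\rangle\ge 0$ uses that the Hodge-star codifferential $d^*$ is the adjoint of $d$. That holds because $d$ vanishes on $\Lambda^{s-1}\mathfrak s^*$, i.e.\ because $\mathfrak s$ is unimodular. State explicitly that you are using $U=0$ from Step 2 here; on non-unimodular algebras this adjointness fails.
\item The first sentence of Step 4, invoking Lemma \ref{lemma1}, is never used and can be dropped.
\item For $\mathfrak a=0$, pairing with $D$ alone only excludes $\lambda=0$. To exclude $\lambda>0$ without the citation, pair the first equation with $\ric^B$ itself, i.e.\ with both $D$ and $\Id$. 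This gives
\begin{equation*}
\|\ric^B\|^2=-\tfrac{\lambda}{4}|\mu|^2-\tfrac{\lambda}{12}|H|^2-\tfrac{1}{12}\langle\Delta H,H\rangle .
\end{equation*}
For $\lambda\ge 0$ this forces $\ric^B=0$, and then $H=0$ follows from the trace. Otherwise, simply cite the nilpotent result, as both you and the paper do.
\end{enumerate}
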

\begin{proof}
	If $\mathfrak{a}=\{0\}$, then $\mathfrak{s}=\mathfrak{n}$ is a nilpotent algebraic GRS. Then $\lambda<0$ by \cite[Proposition 7.7]{fusi2024homogeneousgeneralizedricciflow}.
	
	Thus, we can focus on algebras $\mathfrak{s}=\mathfrak{a}\oplus\mathfrak{n}$ with $\dim(\mathfrak{a})>0$.
	
	Recall that any derivation of a solvable Lie algebra has image contained in the nilradical: $D(\mathfrak{s})\subset \mathfrak{n}$ (see e.g. \cite[Corollary (12.41)]{sagle1986introduction}).
	
Suppose that $\lambda \geq0$. Then, from the first equation in \eqref{eq:lauret formulas} we deduce that
\begin{itemize}
	\item $\lambda=0$;
	\item $[\mathfrak{a},\mathfrak{a}]=0$;
	\item $\ad_A$ is skew-symmetric for every $A\in\mathfrak{a}$;
	\item $H\in \Lambda^3\mathfrak{n}^*$ because $\langle H^2A,A \rangle=\|\imath_AH\|^2=0$ for every $A\in\mathfrak{a}$.
\end{itemize}
The last equation in \eqref{eq:lauret formulas} tells us that $D|_\mathfrak{n}=\ric^B_\mathfrak{n}$ where $\ric^B_\mathfrak{n}=\ric_\mathfrak{n}-\frac{1}{4}H^2$ is the symmetric Bismut-Ricci operator on $(\mathfrak{n}, \langle \cdot, \cdot\rangle_{\mathfrak{n}\times\mathfrak{n}})$.

Now, for $\omega \in \Lambda^3 \mathfrak{n}^* \subset \Lambda^3\mathfrak{s}^*$ it holds that
\begin{align*}
    d_\mathfrak{s}\omega(X_1, X_2,X_3,X_4)&=d_\mathfrak{n}\omega(X_1,X_2,X_3,X_4)\\
    d_\mathfrak{s} \omega(A, X_1,X_2,X_3)&=\rho(\ad_A)\omega(X_1,X_2,X_3)
\end{align*}
for $X_i \in \mathfrak{n}$ and $A \in\mathfrak{a}$. In particular, since $H$ is closed in $\mathfrak{s}$, $d_\mathfrak{n}H=0$. 

At this point, set $a:=\dim(\mathfrak{a})$ and $n:=\dim(\mathfrak{n})$. We observe that
\begin{equation*}
	\star_{\mathfrak{s}}H=\star_{\mathfrak{n}}H \wedge \vol_\mathfrak{a}
\end{equation*}
and by Lemma \ref{lemma1}
\begin{equation*}
	d_\mathfrak{s}\star_{\mathfrak{s}}H=d_{\mathfrak{n}}\star_\mathfrak{n}H \wedge \vol_{\mathfrak{a}} + (-1)^{n-3}\star_{\mathfrak{n}}H \wedge d_\mathfrak{s}\vol_{\mathfrak{a}}.
\end{equation*}
Since $\mathfrak{n}$ is an ideal and $\mathfrak{a}$ is abelian, the second summand is zero, so that
\begin{equation*}
	d^*_\mathfrak{s}H=-\star_{\mathfrak{s}}(d_{\mathfrak{n}}\star_\mathfrak{n}H \wedge \vol_{\mathfrak{a}})=-(-1)^{a[n-(n-2)]}\star_\mathfrak{n}d_\mathfrak{n}\star_\mathfrak{n}H=d^*_\mathfrak{n}H.
\end{equation*}
As a result, we obtain $\Delta_\mathfrak{s}H=\Delta_\mathfrak{n}H$, indeed:
\begin{equation*}
	\Delta_\mathfrak{
	s}H=d_\mathfrak{s}d_\mathfrak{s}^*H=d_\mathfrak{s}d_\mathfrak{n}^*H=d_\mathfrak{n}d_\mathfrak{n}^*H+\sum \alpha^i\wedge \rho(\ad_{A_i})H=\Delta_\mathfrak{n}H
\end{equation*}
where we used the fact that $\imath_A H=0$ for every $A\in \mathfrak{a}$. Thus, the second GRS equation holds.

 Given that $D|_\mathfrak{n}$ is a derivation of $\mathfrak{n}$, this would imply that $(\mathfrak{n}, \mu_{\mathfrak{n}}, \langle \cdot, \cdot\rangle_{\mathfrak{n}\times\mathfrak{n}},  H, D|_\mathfrak{n}, 0)$ is a steady generalized nilsoliton, which cannot happen by \cite[Proposition 7.7]{fusi2024homogeneousgeneralizedricciflow}.
\end{proof}

Next, we turn to the second structural result of this section. From now on we  make three further simplifying assumptions. The first one is that $\mathfrak{s}$ is a unimodular Lie algebra. This means that the mean curvature vector $U$ vanishes.
The second assumption is that $\mathfrak{n}$ is abelian. Finally, we assume that $\mathfrak{a}=\mathbb{R}A$, where we may choose $\|A\|=1$ and set $\alpha=A^\flat\in \mathfrak{a}^*$. The formulas in \eqref{eq:lauret formulas} undergo significant simplifications. 
It is useful to study the behaviour of the 3-form $H$ under the decomposition $\mathfrak{s}=\mathfrak{a}\oplus\mathfrak{n}$.

 Indeed, we can write
 \begin{equation}
 	\label{H-decomp}
 H=\alpha \wedge \omega +\eta
 \end{equation}  where  $\omega \in \Lambda^2\mathfrak{n}^*$ and $\eta \in \Lambda^3\mathfrak{n}^*$. Since $H$ is closed and $\mathfrak{n}$ is abelian,
 \begin{equation*}
 	0=d_\mathfrak{s}H=d_\mathfrak{s}\alpha \wedge \omega-\alpha \wedge d_\mathfrak{n}\omega+d_\mathfrak{n}\eta+\alpha \wedge \rho(\ad_A)\eta=\alpha \wedge \rho(\ad_A)\eta,
 \end{equation*} and in particular
 \begin{equation}
 	\label{rho-ad-A-eta}
 	\rho(\ad_A)\eta=0.
 \end{equation}
 By a slight abuse of notation we will still denote by $\rho$ the Lie algebra representation of the action of $GL(\mathfrak{s})$ on $\Lambda^2\mathfrak{s}^*$. Namely,
 \begin{equation*}
 	\rho(E)\omega=-\omega(E\cdot,\cdot)-\omega(\cdot,E\cdot)
 \end{equation*}
 for every $\omega \in \Lambda^2\mathfrak{s}^*$ and every $E\in \End(\mathfrak{s})$.
 
   Finally, we write $H^2_\mathfrak{n}$ to denote the projection of $H^2|_\mathfrak{n}$ onto $\mathfrak{n}$, that is: 
\begin{equation*}
H^2=\begin{bmatrix} H^2_\mathfrak{a} & H^2_{\mathfrak{an}} \\ (H^2_{\mathfrak{an}})^T & H^2_\mathfrak{n} \end{bmatrix}.
\end{equation*}

We can establish the following result
\begin{lemma}
	\label{Lemma}
	Let $(\mathfrak{s}=\mathfrak{a}\oplus\mathfrak{n}, \mu, \langle \cdot,\cdot\rangle)$ be a solvable unimodular metric Lie algebra, with abelian nilradical $\mathfrak{n}$ and  one-dimensional orthogonal complement $\mathfrak{a}$, say $\mathfrak{a}=\mathbb{R}A$. Assume that
	\begin{itemize}
		\item  $(\langle\cdot,\cdot\rangle,H=\alpha \wedge \omega +\eta)$ defines an algebraic generalized Ricci soliton with $\lambda<0$ on $\mathfrak{s}$, where the decomposition of $H$ is as in \eqref{H-decomp};
		\item $\rho(\ad_A)\omega=0$, that is,  $\omega$ is invariant under the action of $\mathfrak{a}$.
	\end{itemize}
	Then
	\begin{equation*}
		\langle\theta(H^2_\mathfrak{n})\mu, \mu \rangle \leq 0.
	\end{equation*}
\end{lemma}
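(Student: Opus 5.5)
The plan is to reduce the quantity $\langle\theta(H^2_\mathfrak{n})\mu,\mu\rangle$ to a trace involving only $N:=\ad_A|_\mathfrak{n}$, and then to evaluate that trace with the $\mathfrak{n}$-block of the first soliton equation in \eqref{algebraic soliton}. I have not settled the overall sign; see the last paragraph.

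\textbf{Reduction to a trace.} Since $\mathfrak{n}$ is abelian and $\mathfrak{a}=\mathbb{R}A$, the only non-zero brackets are $\mu(A,X)=NX$ for $X\in\mathfrak{n}$, and $\Tr N=0$ by unimodularity. Regard $E:=H^2_\mathfrak{n}$ as an endomorphism of $\mathfrak{s}$ that vanishes on $\mathfrak{a}$ and preserves $\mathfrak{n}$. Directly from \eqref{Lie-alg-rep}, $\theta(E)\mu$ vanishes on $\mathfrak{n}\times\mathfrak{n}$ and on $\mathfrak{a}\times\mathfrak{a}$, while $\theta(E)\mu(A,X)=[E,N]X$. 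Hence
\begin{equation*}
\langle\theta(H^2_\mathfrak{n})\mu,\mu\rangle=2\Tr\big([H^2_\mathfrak{n},N]N^T\big)=2\Tr\big(H^2_\mathfrak{n}[N,N^T]\big),
\end{equation*}
and it suffices to control $\Tr(H^2_\mathfrak{n}[N,N^T])$.

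\textbf{Structure of $D$.} Next I use the structure of the derivation. By \cite[Corollary (12.41)]{sagle1986introduction}, $D(\mathfrak{s})\subset\mathfrak{n}$. Since $D$ is symmetric, $\langle DA,X\rangle=\langle A,DX\rangle=0$ for all $X\in\mathfrak{n}$, so $DA=0$ and $D=0\oplus D_\mathfrak{n}$. The derivation identity applied to $\mu(A,X)$ then gives $[D_\mathfrak{n},N]=0$. Consequently
\begin{equation*}
\Tr\big(D_\mathfrak{n}[N,N^T]\big)=\Tr\big([D_\mathfrak{n},N]N^T\big)=0 .
\end{equation*}

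\textbf{The $\mathfrak{n}$-block of the soliton equation.} The third formula of \eqref{eq:lauret formulas}, with $U=0$ and $\mathfrak{n}$ abelian, reads
\begin{equation*}
\ric^B|_\mathfrak{n}=\tfrac12[N,N^T]-\tfrac14 H^2_\mathfrak{n}=\lambda\Id+D_\mathfrak{n}.
\end{equation*}
Multiply by $[N,N^T]$ and take traces. The $\lambda$-term dies because the trace of a commutator vanishes, and the $D_\mathfrak{n}$-term dies by the previous step. This leaves a relation between $\|[N,N^T]\|^2$ and $\Tr(H^2_\mathfrak{n}[N,N^T])$. To pin down the sign I would also write $H^2_\mathfrak{n}$ explicitly. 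From $\imath_XH=-\alpha\wedge\imath_X\omega+\imath_X\eta$ one gets, up to positive normalising constants, $H^2_\mathfrak{n}=-c_1W^2+\eta^2_\mathfrak{n}$, where $W$ is the skew endomorphism associated with $\omega$. The invariance hypothesis $\rho(\ad_A)\omega=0$ becomes $WN+N^TW=0$, and \eqref{rho-ad-A-eta} gives the analogous relation for $\eta$. With these relations I would rewrite $\Tr(W^2[N,N^T])$ and the $\eta$-term as sums of squares of quantities such as $NW$ and $WN^T$, and use the $\mathfrak{a}\mathfrak{a}$-component of the soliton equation, $\lambda=-\Tr S(N)^2-\frac14|\imath_AH|^2<0$, if needed.

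\textbf{Main obstacle.} The hard part is the sign bookkeeping in the last step. With the conventions as written, the trace identity above seems to force $\|[N,N^T]\|^2$ to be a \emph{non-negative} multiple of $\Tr(H^2_\mathfrak{n}[N,N^T])$, which points the opposite way to the claim. So I would first recheck the sign of the $\frac12[\ad_A,\ad_A^T]$ term and the normalisation of $\theta$. If the computation still gives $\geq0$, the argument should be closed by showing, via the invariance of $\omega$ and $\eta$, that the same trace is also $\leq0$. That would yield $[N,N^T]=0$, consistent with Proposition \ref{propB}(2), and in particular the asserted inequality.
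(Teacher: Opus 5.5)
Your reduction $\langle\theta(H^2_\mathfrak{n})\mu,\mu\rangle=2\Tr(H^2_\mathfrak{n}[N,N^T])$ is correct and is the paper's first step. Everything after it, however, leaves the lemma unproved.

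The identity you extract from the $\mathfrak{n}$-block of the soliton equation contains no sign error. The term $\frac12[N,N^T]$ agrees with Milnor's Ricci formulas (check it on $\mathfrak{e}(2)$), and $\theta$ is normalised as in \eqref{Lie-alg-rep}. Together with $\Tr(D_\mathfrak{n}[N,N^T])=0$ it gives
\[
\Tr\big(H^2_\mathfrak{n}[N,N^T]\big)=2\|[N,N^T]\|^2\geq 0 .
\]
So the first soliton equation can only supply the \emph{reverse} bound, and rechecking conventions will not change that.

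Your fallback, showing ``via the invariance of $\omega$ and $\eta$'' that the trace is $\leq 0$, is literally the statement of the lemma. For it you offer only the hope of rewriting things ``as sums of squares''. Note also that the lemma needs neither $\lambda<0$ nor the soliton equations. It uses only $d_\mathfrak{s}H=0$, which gives $\rho(N)\eta=0$ by \eqref{rho-ad-A-eta}, and the hypothesis $\rho(N)\omega=0$.

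The missing idea is to pair $H^2_\mathfrak{n}$ with an endomorphism $E$ that vanishes on $\mathfrak{a}$ and preserves $\mathfrak{n}$, and to express the result through $\rho$. Expanding $\sum_i H^2(X_i,EX_i)$ with $H=\alpha\wedge\omega+\eta$ gives
\[
\Tr(H^2_\mathfrak{n}E)=-\tfrac13\langle\rho(E)\eta,\eta\rangle-\langle\rho(E)\omega,\omega\rangle .
\]
Now take $E=[N,N^T]$. The relations $\rho([N,N^T])=[\rho(N),\rho(N^T)]$ and $\rho(N)^T=\rho(N^T)$ give, for $\gamma\in\{\eta,\omega\}$,
\[
\langle\rho([N,N^T])\gamma,\gamma\rangle=\|\rho(N^T)\gamma\|^2-\|\rho(N)\gamma\|^2=\|\rho(N^T)\gamma\|^2 .
\]
Hence
\[
\langle\theta(H^2_\mathfrak{n})\mu,\mu\rangle=-\tfrac23\|\rho(N^T)\eta\|^2-2\|\rho(N^T)\omega\|^2\leq 0 .
\]
In your matrix language, the $\omega$-part works because $WN+N^TW=0$ makes $WN$ symmetric, so that
\[
-\Tr(W^2[N,N^T])=\|WN\|^2-\|NW\|^2=-\tfrac12\|NW-(NW)^T\|^2 .
\]

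Once the lemma is proved this way, your trace identity is still useful. Combined with the lemma it immediately forces $[N,N^T]=0$. This gives a shortcut to Proposition \ref{teoB} that avoids the moment-map and Cauchy--Schwarz argument.
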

\begin{proof}
 Consider $E \in  \text{End}(\mathfrak{s})$, such that $E|_\mathfrak{a}=0, E(\mathfrak{n})\subset \mathfrak{n}$. Choose an orthonormal basis $\{X_1,\dots, X_\mathfrak{n},A\}$ of $\mathfrak{s}$ with $\{X_i\}_{i=1}^n$ orthonormal basis of $\mathfrak{n}$. Then, since $H^2_\mathfrak{n}$ is symmetric, we obtain
	\begin{align*}
		\Tr(H^2_\mathfrak{n}E)&=\sum_{i=1}^n \langle H^2_\mathfrak{n}X_i, EX_i \rangle=\sum_{i=1}^n H_\mathfrak{n}^2(X_i,EX_i)\\
		&=\sum_{i,j,k=1}^n H(EX_i,X_j,X_k)H(X_i,X_j,X_k)+ 2\sum_{i,j=1}^n H(EX_i,A,X_j)H(X_i,A,X_j)\\
		&=-\frac{1}{3}\sum_{i,j,k=1}^n \rho(E)\eta(X_i,X_j,X_k)\eta(X_i,X_j,X_k) -\sum_{i,j=1}^n\rho(E)\omega(X_i,X_j)\omega(X_i,X_j)\\
		&=-\frac{1}{3}\langle\rho(E)\eta,\eta\rangle-\langle\rho(E)\omega,\omega\rangle.
	\end{align*} 
	Since $\ad_A|_\mathfrak{n}$ is an endomorphism of $\mathfrak{n}$ we obtain
\begin{align*}
	\langle\theta(H^2_\mathfrak{n})\mu, \mu \rangle &= 2\sum\langle H^2_\mathfrak{n}\mu(A,X_i), \mu(A,X_i)\rangle - 2\sum\langle\mu(A,H^2_\mathfrak{n}X_i),\mu(A,X_i)\rangle\\
	&=2\Tr(\ad_A^TH^2_\mathfrak{n}\ad_A)-2\Tr(\ad_A^T\ad_A H^2_\mathfrak{n})\\
	&=2\Tr(H^2_\mathfrak{n}[\ad_A, \ad_A^T])\\
	&=-\frac{2}{3}\langle\rho([\ad_A, \ad_A^T])\eta, \eta \rangle-2\langle\rho([\ad_A, \ad_A^T])\omega, \omega \rangle\\
	&=-\frac{2}{3}||\rho(\ad_A^T)\eta||^2-2||\rho(\ad_A^T)\omega||^2\leq0
\end{align*}
where in the last equality we used the properties \eqref{Lie-alg-hom}, \eqref{rep-trasposte} of $\rho$, equation \eqref{rho-ad-A-eta} and the hypothesis $\rho(\ad_A)\omega=0$.
\end{proof}
We are now in a position to prove the following.
\begin{proposition}[Proposition \ref{propB}, item (2)]
	\label{teoB}
	Let $(\mathfrak{s}=\mathfrak{a}\oplus\mathfrak{n}, \mu, \langle \cdot,\cdot\rangle, H, \lambda)$ be a solvable unimodular expanding generalized Ricci soliton with abelian nilradical $\mathfrak{n}$ and 1-dimensional orthogonal complement $\mathfrak{a}=\mathbb{R}A$. If $H=\alpha \wedge \omega+\eta$ is such that $\omega$ is $\mathfrak{a}$-invariant, then
	\begin{equation*}
		\ad_A^T \in  \Der(\mathfrak{s}).
	\end{equation*}
\end{proposition}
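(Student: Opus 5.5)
The plan is to reduce the statement to the normality of $N:=\ad_A|_\mathfrak{n}$, that is $[N,N^T]=0$, and then to prove normality by pairing the $\mathfrak{n}$-block of the soliton equation with $[N,N^T]$ and using Lemma \ref{Lemma}.

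\emph{Step 1: reduction to normality.} Since $\mathfrak{n}$ is abelian and $\mathfrak{a}=\mathbb{R}A$, the transpose $\ad_A^T$ kills $A$ and acts on $\mathfrak{n}$ as $N^T$. Here we use that $\ad_A A=0$ and $\ad_A(\mathfrak{s})\subset\mathfrak{n}$. Check the derivation identity for such an endomorphism $E$ on pairs $(A,X)$ with $X\in\mathfrak{n}$; on pairs inside $\mathfrak{n}$ it is trivial. The identity reads
\[
E[A,X]=[A,EX],
\]
that is, $EN=NE$ on $\mathfrak{n}$. So $\ad_A^T\in\Der(\mathfrak{s})$ if and only if $[N,N^T]=0$.

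\emph{Step 2: the $\mathfrak{n}$-block of $\ric^B$.} Specialise the last formula of \eqref{eq:lauret formulas} and polarise it. We have $U=0$ by unimodularity, $\mu(A_i,A_j)=0$ since $\dim\mathfrak{a}=1$, and $\mathfrak{n}$ is abelian. This gives, on $\mathfrak{n}$,
\[
\lambda\Id_\mathfrak{n}+D|_\mathfrak{n}=\tfrac12[N,N^T]-\tfrac14 H^2_\mathfrak{n}.
\]
Next I would check that $D$ commutes with $N$. Recall $D(\mathfrak{s})\subset\mathfrak{n}$. For $X\in\mathfrak{n}$ the derivation property gives
\[
D[A,X]=[DA,X]+[A,DX]=[A,DX],
\]
since $DA\in\mathfrak{n}$ and $\mathfrak{n}$ is abelian. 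Hence $[D|_\mathfrak{n},N]=0$.

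\emph{Step 3: trace against $[N,N^T]$.} Multiply the identity of Step 2 by $[N,N^T]$ and take traces. The $\lambda$ term vanishes because $\Tr[N,N^T]=0$. The $D$ term vanishes because $\Tr(D[N,N^T])=\Tr([D,N]N^T)=0$. What remains is
\[
0=\tfrac12\|[N,N^T]\|^2-\tfrac14\Tr\big(H^2_\mathfrak{n}[N,N^T]\big).
\]
The computation in the proof of Lemma \ref{Lemma} shows that $\langle\theta(H^2_\mathfrak{n})\mu,\mu\rangle=2\Tr(H^2_\mathfrak{n}[N,N^T])$. The lemma applies because the soliton is expanding and $\omega$ is $\mathfrak{a}$-invariant, so this quantity is $\le 0$. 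Therefore $\|[N,N^T]\|^2\le 0$, which forces $[N,N^T]=0$, and Step 1 concludes the proof.

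The main point needing care is Step 2. One must correctly specialise and polarise \eqref{eq:lauret formulas} to get the full $\mathfrak{n}\times\mathfrak{n}$ block of $\ric^B$, including its sign conventions and the factor $\tfrac12$ in front of $[N,N^T]$. The remaining trace manipulations are routine.
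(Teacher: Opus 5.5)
Your proof is correct, and it takes a shorter route than the paper's.

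\textbf{The paper's route.} The paper follows Lauret's argument for classical solsolitons. It inserts the test endomorphisms $E_1=\Id_\mathfrak{n}$ and $E_2=F_\mathfrak{n}$, with $F=D+\frac14H^2$, into the moment map identity \eqref{momentmapRic}. This gives $\lambda n+\Tr F_\mathfrak{n}=0$ and, via Lemma \ref{Lemma} and $\theta(D)\mu=0$, also $\lambda\Tr F_\mathfrak{n}+\Tr F_\mathfrak{n}^2\le 0$. Equality in Cauchy--Schwarz then forces $F_\mathfrak{n}=-\lambda\Id_\mathfrak{n}$. Only then does the $\mathfrak{n}$-block equation \eqref{final} yield $[\ad_A,\ad_A^T]=0$, and \cite[Lemma 4.7]{Lauret+2011+1+21} is cited to finish.

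\textbf{Your route.} For $E$ supported on the $\mathfrak{n}\times\mathfrak{n}$ block, the moment map identity is exactly that block equation paired with $E$. So you work with the same equation, but you pair it with the single endomorphism $[N,N^T]$.
\begin{itemize}
\item The $\lambda$-term vanishes because $[N,N^T]$ is traceless.
\item The $D$-term vanishes because $[D|_\mathfrak{n},N]=0$, which is the same input the paper uses in the form $\theta(D)\mu=0$.
\item The $H^2$-term is controlled by the identity $\langle\theta(H^2_\mathfrak{n})\mu,\mu\rangle=2\Tr(H^2_\mathfrak{n}[N,N^T])$ from the proof of Lemma \ref{Lemma}.
\end{itemize}
Normality follows at once, with no Cauchy--Schwarz step. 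Your Step 1 also replaces the citation of Lauret's lemma with a direct check.

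\textbf{Nothing is lost.} Once $[N,N^T]=0$, the block equation returns $D|_\mathfrak{n}+\frac14H^2_\mathfrak{n}=-\lambda\Id_\mathfrak{n}$. Your trace identity also gives $\rho(\ad_A^T)\eta=0=\rho(\ad_A^T)\omega$.

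\textbf{What each approach buys.} The paper's moment-map formulation fits the real GIT framework designed for general $\mathfrak{a}$ and non-abelian $\mathfrak{n}$. Your cancellation genuinely needs $\mathfrak{n}$ abelian: in general $[D,\ad_A]=\ad_{DA}$ need not vanish on $\mathfrak{n}$. In the setting of this statement, however, your argument is the more economical one.
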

\begin{proof}
	Using the moment map formula \eqref{moment_map} and the identity \eqref{Gen ricci curvature} we obtain
	\begin{equation}
		\label{momentmapRic}
		\Tr\left(\lambda \Id+D+\frac{1}{4}H^2+\frac{1}{2}B_\mu\right)E=\frac{1}{4}\langle\theta(E)\mu, \mu \rangle
	\end{equation}
for every $E\in \text{End}(\mathfrak{s})$.

Set $F:=D+\frac{1}{4}H^2$. According to the decomposition $\mathfrak{s}=\mathbb{R}A \oplus \mathfrak{n}$ we have 
\begin{align*}
	D &= \begin{bmatrix} 0 & 0 \\ 0 & D_\mathfrak{n} \end{bmatrix} & H^2=\begin{bmatrix}* & * \\ * & H^2_\mathfrak{n} \end{bmatrix}.
\end{align*}
The fact that $D|_\mathfrak{a}=0$ follows from $D(\mathfrak{a})\subset \mathfrak{n}$ and by the symmetry of $D$. If we choose $E_1\in \End(\mathfrak{s})$ given by
\begin{equation*}
	 E_1= \begin{bmatrix} 0 & 0 \\ 0 & \Id_\mathfrak{n} \end{bmatrix}
\end{equation*} in \eqref{momentmapRic}, we get
\begin{equation}
	\label{cond2}
	\lambda n+\Tr(F|_\mathfrak{n})=0, \qquad n=\text{dim}(\mathfrak{n}). 
\end{equation}
where $F_\mathfrak{n}$ is the projection of $F|_\mathfrak{n}$ onto $\mathfrak{n}$.

Now, choose
\begin{equation*}
	 E_2= \begin{bmatrix} 0 & 0 \\ 0 & F_\mathfrak{n} \end{bmatrix}
\end{equation*} in \eqref{momentmapRic} and use Lemma \ref{Lemma} to obtain
\begin{equation}
	\label{cond3}
	\lambda \Tr F_\mathfrak{n}+\Tr F_\mathfrak{n}^2\leq0
\end{equation}
In particular, from \eqref{cond2} and \eqref{cond3} we get $\Tr(F_\mathfrak{n}^2)\leq\lambda^2n$. On the other hand, from Cauchy-Schwarz inequality we also have $\Tr(F_\mathfrak{n}^2)\geq\frac{1}{n}(\Tr F_\mathfrak{n})^2=\lambda^2n$. 

Since we reach equality in Cauchy-Schwarz, then from \eqref{cond2} we see that $F_\mathfrak{n}=-\lambda \Id_\mathfrak{n}$. Thus by looking at the last formula in \eqref{eq:lauret formulas} we obtain
\begin{equation}
	\label{final}
	\lambda \Id +D= \frac{1}{2}[\ad_A, \ad_A^T] -\frac{1}{4}H^2
\end{equation}
which implies $[\ad_A, \ad_A^T]=0$. By  \cite[Lemma 4.7]{Lauret+2011+1+21} we get that $\ad_A^T\in \text{Der}(\mathfrak{s})$.
\end{proof}

To deal with forms with non invariant $\omega$ we will make use of the following algebraic decomposition of the Laplacian
\begin{proposition}[Proposition \ref{propB}, item (1)]
	\label{propLaplaciano}
Let $(\mathfrak{s}=\mathfrak{a}\oplus\mathfrak{n}, \mu, \langle \cdot,\cdot\rangle, H, \lambda)$ be a solvable unimodular expanding generalized Ricci soliton with abelian nilradical $\mathfrak{n}$ and 1-dimensional orthogonal complement $\mathfrak{a}=\mathbb{R}A$. If $H=\alpha \wedge \omega+\eta$, then the condition $\Delta H=2\lambda H-\rho(D)H$ is equivalent to
\begin{equation*}
	\begin{cases}
		 \rho(\ad_A)\left(\rho(\ad_A^T)\omega\right)=2\lambda \omega-\rho(D) \omega\\
		2\lambda\eta=\rho(D)\eta.
	\end{cases}
\end{equation*}
\end{proposition}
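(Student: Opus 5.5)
We compute $\Delta_\mathfrak{s}H = d_\mathfrak{s}d_\mathfrak{s}^*H$ explicitly in terms of $\omega$ and $\eta$, using that $H$ is closed. The derivation has image in $\mathfrak{n}$ and is symmetric, so $D=\diag(0,D_\mathfrak{n})$ preserves $\mathfrak{n}$ and kills $A$. Hence $\rho(D)$ commutes with the splitting $\Lambda^3\mathfrak{s}^*=(\mathfrak{a}^*\otimes\Lambda^2\mathfrak{n}^*)\oplus\Lambda^3\mathfrak{n}^*$, and $\rho(D)(\alpha\wedge\omega)=\alpha\wedge\rho(D)\omega$. The proposition therefore reduces to showing
\[
\Delta_\mathfrak{s}H=\alpha\wedge\rho(\ad_A)\rho(\ad_A^T)\omega,
\]
with no $\Lambda^3\mathfrak{n}^*$-component. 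Comparing components then gives both equations, and in particular shows that $\eta$ is harmonic.

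\emph{Step 1: the differential.} Since $\mathfrak{n}$ is abelian and $[\mathfrak{s},\mathfrak{s}]\subset\mathfrak{n}$, we have $d\alpha=0$. For $\beta\in\Lambda^k\mathfrak{n}^*$ (extended by zero on $A$) we get $d_\mathfrak{s}\beta=\alpha\wedge\rho(\ad_A)\beta$, up to a sign convention that I would fix once, consistently with the computation preceding \eqref{rho-ad-A-eta}. Consequently $d_\mathfrak{s}(\alpha\wedge\beta)=0$.

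\emph{Step 2: the codifferential.} We have $\star_\mathfrak{s}\eta=\pm\star_\mathfrak{n}\eta\wedge\alpha$ and $\star_\mathfrak{s}(\alpha\wedge\omega)=\pm\star_\mathfrak{n}\omega$. By Step 1, $d_\mathfrak{s}(\star_\mathfrak{n}\eta\wedge\alpha)=0$, so $d^*_\mathfrak{s}\eta=0$. Next, $d_\mathfrak{s}\star_\mathfrak{n}\omega=\alpha\wedge\rho(\ad_A)\star_\mathfrak{n}\omega$. Unimodularity means $\Tr\ad_A=0$, so $\rho(\ad_A)\vol_\mathfrak{n}=0$. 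Combined with the general identity
\[
\rho(E)\star_\mathfrak{n}=\star_\mathfrak{n}\rho(-E^T)+(\text{trace terms}),
\]
this gives $\rho(\ad_A)\star_\mathfrak{n}\omega=-\star_\mathfrak{n}\rho(\ad_A^T)\omega$. Applying $\star_\mathfrak{s}$ once more yields $d^*_\mathfrak{s}(\alpha\wedge\omega)=\pm\rho(\ad_A^T)\omega\in\Lambda^2\mathfrak{n}^*$.

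\emph{Step 3: assemble.} We have $\Delta H=d_\mathfrak{s}d^*_\mathfrak{s}H=\pm d_\mathfrak{s}\rho(\ad_A^T)\omega=\pm\alpha\wedge\rho(\ad_A)\rho(\ad_A^T)\omega$. The overall sign can be pinned down by positivity: pairing with $H$ gives $\langle\Delta H,H\rangle=\|d^*H\|^2\ge0$, while
\[
\langle\alpha\wedge\rho(\ad_A)\rho(\ad_A^T)\omega,\alpha\wedge\omega\rangle=c\,\|\rho(\ad_A^T)\omega\|^2
\]
with $c>0$ by \eqref{rep-trasposte}. Hence the sign is $+$, and equating with $2\lambda H-\rho(D)H$ componentwise finishes the proof. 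Notably, the expanding hypothesis is not actually used here.

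The main obstacle is Step 2: the commutation identity between $\rho(\ad_A)$ and $\star_\mathfrak{n}$ for a non-skew endomorphism. Lemma~\ref{lemma1} only covers the skew case, so here I would prove the identity directly. First verify it for $E$ symmetric and for $E$ skew, using an eigenbasis in the symmetric case; then check that the trace term is $(\Tr E)\star_\mathfrak{n}$, which vanishes by unimodularity. Keeping track of the signs is tedious, which is why I would settle the final sign by the positivity argument rather than by direct computation.
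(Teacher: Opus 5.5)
Your proof is correct and is essentially the paper's argument. You use $\Delta H=d_\mathfrak{s}d_\mathfrak{s}^*H$, $d^*_\mathfrak{s}\eta=0$, and $d^*_\mathfrak{s}(\alpha\wedge\omega)=\rho(\ad_A^T)\omega$ via the unimodular identity $\rho(\ad_A)\star_\mathfrak{n}=-\star_\mathfrak{n}\rho(\ad_A^T)$, then compare components, with $\rho(D)$ preserving the splitting because $DA=0$.

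The differences are minor:
\begin{itemize}
\item The paper gets the identity in one line by applying the derivation $\rho(\ad_A)$ to the top form $\gamma_1\wedge\star_\mathfrak{n}\gamma_2$, which is annihilated since $\Tr\ad_A=0$. The same argument shows the general trace term is $-(\Tr E)\star_\mathfrak{n}$, not $+(\Tr E)\star_\mathfrak{n}$; this slip is harmless here since the term vanishes.
\item Your positivity argument for the overall sign is legitimate, because unimodularity makes $d^*_\mathfrak{s}$ the adjoint of $d_\mathfrak{s}$ up to positive normalization constants.
\end{itemize}
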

\begin{proof}
	 Recall that $\Tr(\ad_A)=0=\Tr(\ad_A|_{\mathfrak{n}})$ by unimodularity. For every $\gamma_1,\gamma_2 \in \Lambda^k\mathfrak{n^*}$, we then have
	\begin{align*}
		0=\rho(\ad_A)(\gamma_1 \wedge \star_\mathfrak{n}\gamma_2)&=\rho(\ad_A)\gamma_1\wedge \star_\mathfrak{n}\gamma_2+\gamma_1\wedge \rho(\ad_A)\star_\mathfrak{n}\gamma_2\\
		&=\langle\gamma_1,\rho(\ad_A^T)\gamma_2\rangle\vol_\mathfrak{n}+\gamma_1\wedge \rho(\ad_A)\star_\mathfrak{n}\gamma_2
    \end{align*}
    and
    \begin{equation*}
		\gamma_1\wedge \rho(\ad_A)\star_\mathfrak{n}\gamma_2 =(-1)^{k(n-k)}\langle \gamma_1, \star_\mathfrak{n}\rho(\ad_A)\star_\mathfrak{n}\gamma_2 \rangle\vol_\mathfrak{n}
	    \end{equation*}	
	so that
	\begin{equation*}
		\rho(\ad_A^T)\gamma=(-1)^{k(n-k)+1}\star_\mathfrak{n}\rho(\ad_A)\star_\mathfrak{n}\gamma \qquad \forall \gamma \in \Lambda^k\mathfrak{n}^*.
	\end{equation*}
Now, since $\mathfrak{n}$ is abelian, we obtain
\begin{align*}
	\star_\mathfrak{s}(\alpha \wedge \omega)&=\star_\mathfrak{n}\omega\\
	d_\mathfrak{s}\star_\mathfrak{s} (\alpha \wedge \omega)&=\alpha \wedge \rho(\ad_A)\star_\mathfrak{n}\omega\\
	\star_\mathfrak{s} d_\mathfrak{s}\star_\mathfrak{s}(\alpha \wedge \omega)&=\star_\mathfrak{n}\rho(\ad_A)\star_\mathfrak{n}\omega=-\rho(\ad_A^T)\omega.
\end{align*}
Thus 
\begin{equation*}
	\Delta (\alpha \wedge \omega)=\alpha \wedge \rho(\ad_A)\left(\rho(\ad_A^T)\omega\right).
\end{equation*}
Similarly, $\star_\mathfrak{s}\eta=-\alpha \wedge \star_\mathfrak{n}\eta$, so that
\begin{equation*}
	d_\mathfrak{s}\star_\mathfrak{s}\eta=\alpha \wedge(\alpha \wedge \rho(\ad_A)\star_\mathfrak{n}\eta)=0
\end{equation*}
so that $d_\mathfrak{s}^*\eta=0$. By equation \eqref{rho-ad-A-eta} we also know that $d_\mathfrak{s}\eta=0$. This implies that $\Delta_\mathfrak{s}\eta=0$ and as a consequence $\Delta H=\alpha \wedge \rho(\ad_A)\left(\rho(\ad_A^T)\omega\right)$.

If $(\langle\cdot,\cdot\rangle,H)$ defines a solsoliton, we then have $\Delta H=2\lambda H-\rho(D)H$, which is equivalent to
\begin{align*}
	\alpha \wedge \rho(\ad_A)\left(\rho(\ad_A^T)\omega\right)&=2\lambda (\alpha \wedge \omega+\eta)-\rho(D)(\alpha \wedge \omega + \eta)\\
	&=2\lambda (\alpha \wedge \omega+\eta)-\alpha\wedge \rho(D)\omega-\rho(D)\eta.
\end{align*}
whence the thesis follows.
\end{proof}

	\section{Algebraic GRS in dimension 3}
	\label{sec_dim3}
	In this section we show that the unique algebraic generalized  Ricci soliton on any non-nilpotent Lie algebra of dimension 3 is given by a Bismut-Ricci flat metric on $\mathfrak{su}(2)$. The nilpotent case is studied in \cite{fusi2024homogeneousgeneralizedricciflow}, while classical solsolitons are classified in \cite{Lauret+2011+1+21}. 
	
	On a 3-dimensional Lie algebra, any 3-form is proportional to the volume form. As a result, on a generalized Ricci soliton $(\langle \cdot,\cdot\rangle,H,D,\lambda)$ the $3$-form $H$ satisfies
	\begin{equation*}
		\Delta H=2\lambda H-\rho(D)H=(2\lambda+\Tr(D))H.
	\end{equation*}
	\begin{theorem}
		\label{class_3d}
	Up to isometry, simultaneous positive scaling of the pair $(\langle \cdot,\cdot\rangle, H)$ and replacement of $H$ with $-H$ , the unique $3$-dimensional, non-nilpotent, algebraic generalized Ricci soliton with nonzero $3$-form $H$ is $(\mathfrak{su}(2),\langle \cdot,\cdot\rangle, H, D=0, \lambda=0)$ where
	\begin{itemize}
		\item the brackets are given on a $\langle \cdot,\cdot\rangle$-orthonormal basis $\{e_1,e_2,e_3\}$ by
		\begin{equation*}
			\mu(e_1,e_2)=e_3, \qquad \mu(e_2,e_3)=e_1, \qquad \mu(e_3,e_1)=e_2;
		\end{equation*}
		\item $H= e^{123}$.
	\end{itemize}
	\end{theorem}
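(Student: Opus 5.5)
We split into the unimodular non-nilpotent case and the non-unimodular (necessarily solvable) case. The setup is the same in both. Since $H=cH_0$ with $H_0=e^{123}$ the volume form of an oriented orthonormal basis, we have $H^2=2c^2\Id$. Hence $\ric^B=\ric-\frac{c^2}{2}\Id$, and the soliton system becomes
\begin{equation*}
\ric=\left(\lambda+\tfrac{c^2}{2}\right)\Id+D,\qquad \Delta H=(2\lambda+\Tr D)H .
\end{equation*}
The first equation says exactly that $(\mathfrak{g},\langle\cdot,\cdot\rangle)$ is a classical algebraic Ricci soliton with constant $\lambda'=\lambda+c^2/2$. We then compute $\Delta H$ directly. $H$ is automatically closed, and $d^*H=-\star d\star H$ with $\star H$ a constant. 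The function $1$ satisfies $d1=0$ on a Lie algebra, so $d^*H=0$ and $\Delta H=0$. Thus the second equation reduces to
\begin{equation*}
2\lambda+\Tr D=0.
\end{equation*}
This is the key simplification: we need the classical algebraic Ricci solitons in dimension 3 together with the scalar condition $\Tr D=-2\lambda$.

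For solvable $\mathfrak{s}$, Theorem \ref{teoA} gives $\lambda<0$, so $\Tr D=-2\lambda>0$. On the other hand, we take the trace of the first equation in \eqref{eq:lauret formulas}-type form, or use the known identity for algebraic solitons on solvable algebras, $\Tr(\ric D)=-\lambda'\Tr D$. We would rather proceed concretely. We use Milnor frames for unimodular algebras and the standard normal forms $\mathfrak{s}=\mathbb{R}A\ltimes\mathbb{R}^2$ for the non-unimodular ones. In each normal form we write $\ric$ explicitly, impose that $\ric-\lambda'\Id$ be a symmetric derivation $D$, and solve.

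For $\mathfrak{e}(2)$ and $\mathfrak{e}(1,1)$ the Lauret classification \cite{Lauret+2011+1+21} gives only the flat metric on $\mathfrak{e}(2)$ and the solsoliton on $\mathfrak{e}(1,1)$, which has $D=\diag(0,\lambda'',-\lambda'')$-type structure and trace zero. The flat case has $\lambda'=0$ and $D=0$, hence $\lambda=-c^2/2$ and $\Tr D=0\neq -2\lambda=c^2$, a contradiction. For $\mathfrak{e}(1,1)$ we have $\Tr D=0$, which forces $\lambda=0$, contradicting Theorem \ref{teoA}. In the non-unimodular families, $D$ is supported on $\mathfrak{n}=\mathbb{R}^2$. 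We plan to show that the solitons there are Einstein-like with $D=\lambda''$-type scaling and compare traces. Using the trace of the $\mathfrak{a}$-component of $\ric$, which is $-\Tr S(\ad_A)^2<0$, gives $\lambda'<0$. Checking $\Tr D=-2\lambda'+c^2$ against the explicit $D$ should then yield a contradiction. This case check is the main obstacle: we must verify, for each normal form, that the extra condition $\Tr D=c^2-2\lambda'$ with $c\neq0$ is incompatible with the classical soliton constraints.

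For simple algebras we argue as follows. A derivation of a semisimple algebra is inner, and a symmetric inner derivation $\ad_X$ has zero trace. On $\mathfrak{su}(2)$ with a bi-invariant metric, $\ad_X$ is skew, so $D=0$. On $\mathfrak{sl}(2,\mathbb{R})$, a symmetric $\ad_X$ is in fact zero by Milnor-frame computation. So $D=0$ and $\lambda=0$, and the metric must be Einstein with $\ric=\frac{c^2}{2}\Id$. Using Milnor's formulas, $\mathfrak{sl}(2,\mathbb{R})$ admits no positive-Ricci metric, and on $\mathfrak{su}(2)$ Einstein forces the round metric $\mu(e_i,e_j)=e_k$ with $\ric=\frac12\Id$. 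This gives $c=\pm1$. Scaling and $H\mapsto -H$ then yield the stated soliton.
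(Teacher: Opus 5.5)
Your reduction is correct and is the paper's: $H^2=2c^2\Id$ and $\Delta H=0$, so the system becomes $\ric=\lambda'\Id+D$ with $\lambda'=\lambda+\tfrac{c^2}{2}$, together with $\Tr D=-2\lambda$. Your endgame for the simple algebras (positive Einstein forces round $\mathfrak{su}(2)$, $c=\pm1$) also matches the paper's case $\lambda_1\lambda_2\lambda_3\neq0$. The solvable branch, however, has a genuine gap.

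\emph{The $\mathfrak{e}(1,1)$ step is wrong.} The soliton there does not have $\Tr D=0$. In a Milnor frame with $\lambda_1=0$, a diagonal derivation must satisfy $d_2=d_1+d_3$ and $d_3=d_1+d_2$, so $D=\diag(0,d,d)$. For $\lambda_2=-\lambda_3=\ell$ one gets $\ric=\diag(-2\ell^2,0,0)$, hence $\lambda'=-2\ell^2$, $D=\diag(0,2\ell^2,2\ell^2)$ and $\Tr D=-2\lambda'>0$. The trace-free symmetric derivation you have in mind is $\ad_{e_1}$, but it is not diagonal in this frame and is not $\ric-\lambda'\Id$. The case still closes, for a different reason: $\Tr D=-2\lambda'$ and $\Tr D=-2\lambda=c^2-2\lambda'$ together give $c=0$.

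\emph{The non-unimodular case is not proved.} It is only announced (``we plan to show'', ``should then yield''), and the asserted ``Einstein-like'' form of $D$ is not justified.

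\emph{A uniform fix.} You can treat every solvable non-nilpotent $\mathfrak{s}=\mathbb{R}A\oplus\mathfrak{n}$ at once (here $\mathfrak{n}$ is $2$-dimensional abelian), without Lauret's list or Theorem~\ref{teoA}. Since $D(\mathfrak{s})\subset\mathfrak{n}$ and $D$ is symmetric, $DA=0$, so $\lambda=\langle\ric^B A,A\rangle$. Tracing $\ric^B=\lambda\Id+D$ and using $\Tr D=-2\lambda$ gives $\Tr\ric^B=\lambda$. Hence $\sum_i\langle\ric^B X_i,X_i\rangle=0$ over an orthonormal basis of $\mathfrak{n}$. By the last formula in \eqref{eq:lauret formulas}, with $\mathfrak{n}$ abelian and $U=(\Tr\ad_A)A$, this sum equals $-(\Tr\ad_A)^2-c^2$, which forces $c=0$. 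This is exactly what the paper's identity $(\alpha+\delta)^2+b^2=0$ says in the non-unimodular Milnor frame, and it disposes of $\mathfrak{e}(2)$ and $\mathfrak{e}(1,1)$ as well.

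\emph{A smaller point in the simple case.} The metric on $\mathfrak{su}(2)$ is arbitrary, not bi-invariant. On $\mathfrak{sl}(2,\mathbb{R})$, nonzero symmetric inner derivations do exist for some metrics: $\ad_h$ is symmetric when the standard basis $h,e,f$ is declared orthonormal. So your stated reasons for $D=0$ are not valid as written. The correct reason, which is the paper's, is that $D=\ric-\lambda'\Id$ is diagonal in a Milnor frame. When $\lambda_1\lambda_2\lambda_3\neq0$, the relations $d_1=d_2+d_3$, $d_2=d_1+d_3$, $d_3=d_1+d_2$ then force $D=0$ for every metric.
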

\begin{proof}
	We divide the proof into two parts. First, assume that the Lie algebra $\mathfrak{g}$ is unimodular.
	
	Fix an inner product $\langle \cdot,\cdot\rangle$ on $\mathfrak{g}$. Then, thanks to \cite{MILNOR1976293} it is always possible to find an orthonormal basis $\{e_1,e_2,e_3\}$ for which the only nonzero brackets are
	\begin{equation*}
		\mu(e_2,e_3)=\lambda_1 e_1,  \qquad \mu(e_3,e_1)=\lambda_2 e_2, \qquad \mu(e_1,e_2)=\lambda_3 e_3,
	\end{equation*}
	with $\lambda_1,\lambda_2,\lambda_3$ not all zero. 
	
	With this choice one has that the Ricci operator is given \cite[Theorem 4.3]{MILNOR1976293}, 
	 by:
	\begin{equation*}
		\ric=\frac{1}{2}\text{diag}\left(\lambda_1^2-(\lambda_2-\lambda_3)^2, \lambda_2^2-(\lambda_1-\lambda_3)^2, \lambda_3^2-(\lambda_1-\lambda_2)^2\right).
	\end{equation*}
	A generic closed 3-form $H$ is given by $H=be^{123}$ for some $b\in \mathbb{R}$. Then $H^2=2b^2\text{Id}$ and
	\begin{equation*}
		\ric^B=\ric-\frac{1}{4}H^2=\ric-\frac{b^2}{2}\text{Id}.
	\end{equation*}
	
	 Since $\mathfrak{g}$ is non-nilpotent, at most one of the $\lambda_i$'s may be zero. Consequently we restrict to the cases
	\begin{enumerate}
		\item $\lambda_1\lambda_2\lambda_3 \neq0 $;
		\item $\lambda_i=0$ and $\lambda_j\lambda_k\neq 0$ with $\{i,j,k\}=\{1,2,3\}$.
	\end{enumerate}
	
	If $\lambda_1\lambda_2\lambda_3 \neq0 $, then $D_i^i=0$ for all $i\in\{1,2,3\}$. In particular $\Tr(D)=0$. Moreover the brackets impose $\Delta H=0$ so
	\begin{equation*}
		0=\Delta H=(2\lambda+\Tr(D))H=2\lambda H
	\end{equation*}
	forces $\lambda=0$. As a consequence,
	\begin{equation*}
		\ric^B-\lambda\Id \in \Der(\mathfrak{g}) \Leftrightarrow \ric^B=0
	\end{equation*}
	 This means that that the only Lie algebras in case $(1)$ that admit an algebraic GRS are the ones with $\ric=\frac{1}{4}H^2=\frac{b^2}{2}\Id$. From \cite{MILNOR1976293} we know that the only $3$-dimensional Lie algebra for which the Ricci operator has signature $(+,+,+)$ is $\mathfrak{su}(2)$. This proves that $(\mathfrak{su}(2),\langle \cdot,\cdot\rangle, H, D=0, \lambda=0)$ is a Bismut-Ricci flat GRS, and it is the unique GRS in the case $\lambda_1\lambda_2\lambda_3\neq0$.
	
	If instead, $\lambda_1=0$ and $\lambda_2\lambda_3\neq 0$, then $D_1^1=0$ and $D_2^2=D_3^3=d$. The brackets impose that $\Delta H=0$, so
	\begin{equation*}
		0=\Delta H=(2\lambda+\Tr(D))H=(2\lambda+2d) H.
	\end{equation*}
	This means that $\lambda=-d$. In order that $\ric^B-\lambda \text{Id} \in \text{Der}(\mathfrak{g})$ it is then necessary that $(\ric^B-\lambda\Id)_1^1=0$, that is
	\begin{equation}
		\label{lambda_3dunimodular}
		\lambda=-\frac{1}{2}\left((\lambda_2-\lambda_3)^2+b^2\right).
	\end{equation}
	It is also necessary that $(\ric^B-\lambda\Id)_2^2=(\ric^B-\lambda\Id)_3^3$, meaning:
	\begin{equation*}
		\lambda_2^2-\lambda_3^2= \lambda_3^2-\lambda_2^2.
	\end{equation*}
	This equation is solved if and only if $\lambda_2=\pm \lambda_3$, which immediately implies $\ric_2^2=\ric_3^3=0$.
 Note that we also need $(\ric^B-\lambda\Id)_2^2=d=-\lambda$. Expanding this gives:
	\begin{equation*}
		\ric_2^2 - \frac{b^2}{2} - \lambda = -\lambda \implies \ric_2^2 = \frac{b^2}{2}.
	\end{equation*}
Since we just established that $\ric_2^2 = 0$, this forces $\frac{b^2}{2} = 0$, meaning $b=0$. We can conclude that there do not exist any GRS with nonzero $H$ in this case.

    Now, let us assume that the Lie algebra is non-unimodular. Fix an inner product $\langle \cdot,\cdot \rangle$ on $\mathfrak{g}$. Then, by \cite{MILNOR1976293}, $\mathfrak{g}$ admits a $\langle \cdot,\cdot \rangle$-orthonormal frame $\{e_1,e_2,e_3\}$ for which
	\begin{equation*}
		\mu(e_1,e_2)=\alpha e_2+\beta e_3, \qquad \mu(e_1,e_3)=\gamma e_2+ \delta e_3 \qquad \mu(e_2,e_3)=0.
	\end{equation*}
	with $\alpha+\delta \neq 0$ and $\alpha\gamma+\beta\delta=0$. This basis  is often called a \emph{Milnor frame} for the Lie algebra. It diagonalizes the Ricci tensor by
	\begin{equation*}
		\ric=\text{diag}\left(-\alpha^2-\delta^2-\frac{1}{2}(\beta+\gamma)^2,-\alpha(\alpha+\delta)+\frac{1}{2}(\gamma^2-\beta^2), -\delta(\alpha+\delta)+\frac{1}{2}(\beta^2-\gamma^2)\right)
	\end{equation*}
	Consider a generic 3-form $H=b e^{123}$, $b\in\mathbb{R}$. Then $H^2=2b^2\text{Id}$ and
	\begin{align*}
		(\ric^B)_1^1&=-\alpha^2-\delta^2-\frac{1}{2}(\beta+\gamma)^2-\frac{b^2}{2},\\
		(\ric^B)_2^2&=-\alpha(\alpha+\delta)+\frac{1}{2}(\gamma^2-\beta^2)-\frac{b^2}{2},\\
		(\ric^B)_3^3&=-\delta(\alpha+\delta)+\frac{1}{2}(\beta^2-\gamma^2)-\frac{b^2}{2}.
	\end{align*}
	To determine the space of derivations, we recall that $D\in \text{Der}(\mathfrak{g})$ if and only if:
	\begin{align*}
		D\mu(e_1,e_2)&=\mu(De_1,e_2)+\mu(e_1,De_2),\\
		D\mu(e_1,e_3)&=\mu(De_1,e_3)+\mu(e_1, De_3),\\
		0&=\mu(De_2,e_3)+\mu(e_2,De_3).
	\end{align*}
	From the first two conditions we obtain, in particular, that
	\begin{equation*}
		\begin{cases}
			\beta D_3^2= \alpha D_1^1+\gamma D_2^3\\
			\gamma D_2^3=\delta D_1^1+\beta D_2^3
		\end{cases}
	\end{equation*}
	which implies that $D_1^1=0$, since $\alpha+\delta\neq0$. Given that $\ric^B$ is diagonal and $(\ric^B)_1^1<0$, we see that
	\begin{equation*}
	\ric^B \notin \text{Der}(\mathfrak{g}).
	\end{equation*}
	This proves that steady algebraic solitons for the generalized Ricci flow cannot exist on 3-dimensional solvable, non-unimodular Lie algebras.
	
	More generally, if one has $\ric^B-\lambda \text{Id} \in \text{Der}(\mathfrak{g})$, then
	\begin{equation}
		\label{lambda_3d_nonunimod}
		\lambda=-\alpha^2-\delta^2-\frac{1}{2}(\beta+\gamma)^2-\frac{b^2}{2}.
	\end{equation}
	Then again, $\Delta H=0$ and from $\Tr(D)=-2\lambda$
	\begin{equation*}
		D=\ric-\frac{1}{4}H^2-\lambda \Id \Rightarrow -2\lambda=\operatorname{scal}-\frac{1}{4}\Tr(H^2)-3\lambda \Rightarrow \lambda=\operatorname{scal}-\frac{1}{4}\Tr(H^2).
	\end{equation*}
	Substituting \eqref{lambda_3d_nonunimod} yields
	\begin{equation*}
		-\alpha^2-\delta^2-\frac{1}{2}(\beta+\gamma)^2-\frac{b^2}{2}=-2\alpha^2-2\delta^2-2\alpha\delta-\frac{1}{2}(\beta+\gamma)^2-\frac{3}{2}H^2 \Rightarrow (\alpha+\delta)^2+b^2=0.
	\end{equation*}
	Since $\alpha+\delta\neq0$, it follows that we cannot find any non-classic solvable soliton having these structure equations.
	\end{proof}

	\section{Algebraic solvable GRS in dimension 4}
\label{sec_dim4}

	In this section we turn to the 4-dimensional case, with the complete classification of solvable, unimodular, non-nilpotent algebraic generalized Ricci solitons that are non-classical.  The classification of generalized Ricci solitons for nilpotent Lie algebras up to dimension 4 has been provided in \cite{fusi2024homogeneousgeneralizedricciflow} whereas the classical Ricci solsolitons are classified in \cite{Lauret+2011+1+21}. There are eight solvable, unimodular, non-nilpotent Lie algebras of dimension 4 up to isomorphism (see e.g. \cite{Andrada2005} for the classification). We split the analysis into two cases based on whether the nilradical is abelian or non-abelian.
	
\subsection{Algebras with abelian nilradical}

A solvable unimodular Lie algebra with 3-dimensional abelian nilradical is isomorphic to one of those listed in Table \ref{table}. For each Lie algebra, the nilradical is spanned by $\{e_1,e_2,e_3\}$ and the action of a fourth basis vector $e_4$ is given, for each algebra, in the second column. Thanks to Propositions \ref{teoB} and \ref{propLaplaciano} we are able to give a complete classification of non-classical generalized solsolitons on each of these Lie algebras. We are adopting the notation used in \cite{Andrada2005}.

\begin{remark}
	For this class of Lie algebras and for suitable $H$, the constant $\lambda$ is entirely determined by the trace of $D|_\mathfrak{n}$. This is because, writing $H=\alpha \wedge \omega +\eta$ as in Proposition \ref{propLaplaciano} then, if $\eta \neq 0$, we have that $\eta$ is proportional to the volume form of $\mathfrak{n}$. So, $\rho(D)\eta=-\Tr(D|_\mathfrak{n})\eta$ which implies, by Proposition \ref{propLaplaciano}, that $2\lambda=-\Tr(D|_\mathfrak{n})$.
\end{remark}

	\setlength{\tabcolsep}{20pt}
\begin{table}[H]
	\centering
	\caption{Algebras with abelian nilradical}
	\begin{tabular}{lcl}
		\toprule
		$\mathfrak{s}$ & \qquad & $\ad_{e_4}|_\mathfrak{n}$ \\
		\midrule
		$\mathfrak{r}_{4,-1/2,-1/2}$ & \qquad& $\begin{bmatrix} 1 & 0 & 0 \\ 0 & 1 & 0 \\ 0 & 0 & -2 \end{bmatrix}$ \\ \addlinespace[10pt]
		$\mathfrak{e}(1,1) \oplus \mathbb{R}$ & \qquad& $\begin{bmatrix} 0 & 0 & 0 \\ 0 & 1 & 0 \\ 0 & 0 & -1 \end{bmatrix}$ \\ \addlinespace[10pt]
		$\mathfrak{r}'_{4,-2\mu,\mu} \qquad (\mu>0)$ & \qquad & $\begin{bmatrix} \mu & 1 & 0 \\ -1 & \mu & 0 \\ 0 & 0 & -2\mu \end{bmatrix}$ \\ \addlinespace[10pt]
		$\mathfrak{r}_{4,\mu,-1-\mu} \qquad (\mu>1)$ & \qquad & $\begin{bmatrix} \mu & 0 & 0 \\ 0 & 1 & 0 \\ 0 & 0 & -1-\mu \end{bmatrix}$ \\ \addlinespace[10pt]
		$\mathfrak{r}_{4,-1/2}$ & \qquad & $\begin{bmatrix} 1 & 1 & 0 \\ 0 & 1 & 0 \\ 0 & 0 & -2 \end{bmatrix}$ \\ \addlinespace[10pt]
		$ \mathfrak{e}(2)\oplus \mathbb{R}$ & \qquad &$\begin{bmatrix} 0 & 1 & 0 \\ -1 & 0 & 0 \\ 0 & 0 & 0 \end{bmatrix}$ \\
		\bottomrule
	\end{tabular}
	\label{table}
\end{table}
	
	\begin{proposition}
		\label{class1}
		On the Lie algebra $\mathfrak{r}_{4,-1/2,-1/2}$ the unique generalized Ricci soliton with nonzero $H$, up to simultaneous positive scaling of the pair $(\langle \cdot,\cdot\rangle, H)$ and replacement of $H$ with $-H$, is $(\mathfrak{r}_{4,-1/2,-1/2}, \langle\cdot,\cdot\rangle, H, D, \lambda)$, where:
		\begin{itemize}
			\item  the bracket relations are, with respect to a $\langle\cdot,\cdot\rangle$-orthonormal basis $X_1, \dots, X_4$ having dual basis $X^1, \dots, X^4$:
			\begin{equation*}
				\mu(X_4, X_1)=X_1, \qquad \mu(X_4, X_2)=X_2, \qquad \mu(X_4,X_3)=-2X_3;
			\end{equation*} 
			\item $H=2X^{123}$;
			\item $D =\diag(4,4,4,0)$;
			\item $\lambda=-6$.	
		\end{itemize}
	\end{proposition}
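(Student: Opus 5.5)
The plan is to fix an inner product on $\mathfrak{r}_{4,-1/2,-1/2}$ and choose an orthonormal basis adapted to $\mathfrak{s}=\mathbb{R}A\oplus\mathfrak{n}$, where $\mathfrak{n}=\operatorname{span}\{e_1,e_2,e_3\}$ is the abelian nilradical and $A\perp\mathfrak{n}$ has unit length. Then $\ad_A|_\mathfrak{n}=c\,P\diag(1,1,-2)P^{-1}$ for some $c\neq0$ and some $P$ that need not be orthogonal. Write $H=\alpha\wedge\omega+\eta$ as in \eqref{H-decomp}. By Theorem \ref{teoA} we have $\lambda<0$, so Propositions \ref{teoB} and \ref{propLaplaciano} apply. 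I will split the argument according to whether $\omega$ is $\mathfrak{a}$-invariant.

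\emph{Invariant case.} Suppose $\rho(\ad_A)\omega=0$.
\begin{enumerate}
\item Proposition \ref{teoB} gives $\ad_A^T\in\Der(\mathfrak{s})$, which as in \cite{Lauret+2011+1+21} amounts to $[\ad_A,\ad_A^T]=0$. So $\ad_A|_\mathfrak{n}$ is normal with real spectrum, hence symmetric, and its eigenspaces are orthogonal.
\item After rescaling $A$ (equivalently, rescaling the pair $(\langle\cdot,\cdot\rangle,H)$) we may therefore assume the bracket is exactly the one in the statement, with $X_4=A$.
\item The weights of $\rho(\ad_A)$ on $\Lambda^2\mathfrak{n}^*$ are $-2$ on $X^{12}$ and $1$ on $X^{13},X^{23}$. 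None vanishes, so $\omega=0$ and $H=bX^{123}$.
\item From \eqref{eq:lauret formulas}: on $\mathfrak{n}$ we get $\ric^B=-\tfrac{b^2}{2}\Id_\mathfrak{n}$, since $[\ad_A,\ad_A^T]=0$ and $H^2_\mathfrak{n}=2b^2\Id$. On $A$ we get $\ric^B(A,A)=-\Tr(\ad_A^2)=-6$, and the mixed terms vanish.
\item Since $DA=0$, this forces $\lambda=-6$ and $D|_\mathfrak{n}=(6-\tfrac{b^2}{2})\Id$. This $D$ is clearly a symmetric derivation.
\item The $\eta$-equation of Proposition \ref{propLaplaciano}, in the form of the Remark, reads $2\lambda=-\Tr D|_\mathfrak{n}$, that is $-12=-18+\tfrac{3b^2}{2}$. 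Hence $b^2=4$, which gives $b=\pm2$ and $D=\diag(4,4,4,0)$.
\item The $\omega$-equation holds trivially since $\omega=0$. This yields exactly the stated soliton.
\end{enumerate}

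\emph{Non-invariant case.} This is the main obstacle: here $\ad_A$ need not be normal and the metric is a priori arbitrary, so the goal is to show that $\rho(\ad_A)\omega\neq0$ is impossible. The first ingredient is structural. Any derivation $D$ with $D(\mathfrak{s})\subset\mathfrak{n}$ that is symmetric satisfies $DA=0$, and $D|_\mathfrak{n}$ commutes with $\ad_A|_\mathfrak{n}$. Hence $D|_\mathfrak{n}$ preserves the eigenspaces $V_1$ (of dimension $2$) and $V_{-2}$ (of dimension $1$), and since it is symmetric it also preserves their orthogonal complements. The second ingredient is the mixed component: because $\mathfrak{n}$ is abelian and $\mathfrak{s}$ is unimodular, $\ric(A,X)=0$, so the soliton equation forces $H^2(A,X)=0$ for all $X\in\mathfrak{n}$. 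This is a bilinear condition coupling $\omega$ and $\eta$.

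The third ingredient is to redo the proof of Proposition \ref{teoB} while keeping the $\omega$-term from Lemma \ref{Lemma}. Choosing $E_1=\Id_\mathfrak{n}$ and $E_2=F_\mathfrak{n}$ in \eqref{momentmapRic} now produces the inequality $\lambda\Tr F_\mathfrak{n}+\Tr F_\mathfrak{n}^2\le \tfrac12\Tr(H^2_\mathfrak{n}[\ad_A,\ad_A^T])$ with a possibly positive right-hand side. The idea is to bound that right-hand side using the pairing of the $\omega$-equation of Proposition \ref{propLaplaciano} with $\omega$:
\[
\|\rho(\ad_A^T)\omega\|^2=2\lambda\|\omega\|^2-\langle\rho(D)\omega,\omega\rangle .
\]
I would decompose $\omega$ into the generalized weight spaces of $\rho(\ad_A)$, which are preserved by $\rho(D)$ because $D$ and $\ad_A$ commute. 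Together with the trace identity \eqref{cond2}, this should force $\rho(\ad_A^T)\omega=0$. It would then follow that $\rho(\ad_A)\omega=0$, via $\|\rho(\ad_A)\omega\|^2-\|\rho(\ad_A^T)\omega\|^2=\langle\rho([\ad_A^T,\ad_A])\omega,\omega\rangle$ and the moment-map equality case, reducing to the invariant case.

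If this inequality argument fails, the fallback is a direct computation. I would parametrize the metric by a non-orthogonal angle between $V_1$ and $V_{-2}$, impose the derivation constraints on $D=\ric^B-\lambda\Id$ entrywise, and show that the $\omega$-equation has no nonzero solution.
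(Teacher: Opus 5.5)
\paragraph{There is a genuine gap: the case $\omega\neq0$ is not proved.}

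Your invariant case is correct. It is a reasonable shortcut compared with the paper, which instead works in the explicit normal form $\mu(X_4,X_3)=3abX_1-2aX_3$ and extracts $b=0$ from the Ricci equation. Note that $\rho(\ad_A)$ has weights $-2c,c,c\neq0$ on $\Lambda^2\mathfrak{n}^*$, so ``$\omega$ invariant'' just means $\omega=0$.

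The remaining case $\omega\neq0$ is the bulk of the paper's proof, and for it you only say that the argument ``should force'' $\rho(\ad_A^T)\omega=0$. The tool you propose cannot produce this:
\begin{itemize}
\item Relation \eqref{momentmapRic} is just the Ricci equation paired with $E$. The $\mathfrak{n}$-block gives $F_\mathfrak{n}+\lambda\Id_\mathfrak{n}=\frac{1}{2}[\ad_A,\ad_A^T]$, and $\Tr(D_\mathfrak{n}[\ad_A,\ad_A^T])=0$ because $D$ commutes with $\ad_A$. With these, the $E_2$-relation holds identically, so without the sign from Lemma~\ref{Lemma} it carries no information.
\item Splitting $\omega$ along the generalized weight spaces of $\rho(\ad_A)$ does not help. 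These spaces are not orthogonal when $\ad_A$ is not normal, and \eqref{cond2} only controls the \emph{sum} of the eigenvalues of $D_\mathfrak{n}+\lambda\Id_\mathfrak{n}$.
\item Your final step through the ``moment-map equality case'' is unjustified. It is also unnecessary: $\rho(\ad_A^T)$ is invertible on $\Lambda^2\mathfrak{n}^*$, so $\rho(\ad_A^T)\omega=0$ already gives $\omega=0$.
\end{itemize}
The fallback computation is only a plan, not a proof.

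\paragraph{What is needed.}
You need the sign $\langle(2\lambda-\rho(D))\omega,\omega\rangle\le0$. Pairing the $\omega$-equation of Proposition~\ref{propLaplaciano} with $\omega$ then gives $\|\rho(\ad_A^T)\omega\|^2\le0$, hence $\omega=0$.

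The paper obtains this sign subcase by subcase. It computes $\lambda$ and the diagonal of $D$ in the normal form and checks that $2\lambda+m_{ii}+m_{jj}\le0$ on the support of $\omega$; the subcase $b=0$ is solved directly.

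The sign also follows from your first ingredient. On $\Lambda^2\mathfrak{n}^*$ one has $2\lambda-\rho(D)=-\rho(D_\mathfrak{n}+\lambda\Id_\mathfrak{n})$, and $-\rho(D_\mathfrak{n}+\lambda\Id_\mathfrak{n})$ has eigenvalues $\nu_i+\nu_j$ $(i<j)$. Here the $\nu_i$ are the eigenvalues of $D_\mathfrak{n}+\lambda\Id_\mathfrak{n}=\frac{1}{2}[\ad_A,\ad_A^T]-\frac{1}{4}H^2_\mathfrak{n}$.
\begin{itemize}
\item If $V_1\perp V_{-2}$, the commutator vanishes and all $\nu_i\le0$.
\item Otherwise, $m=V_1\cap V_{-2}^\perp$ is an eigenline of $D_\mathfrak{n}$. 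Moreover $D_\mathfrak{n}$ is scalar on $m^\perp$, since it preserves the distinct, non-orthogonal lines $V_{-2}$ and $V_1^\perp$ in that plane. Also $[\ad_A,\ad_A^T]$ annihilates $m$ (because $\ad_A m=\ad_A^Tm=cm$) and is traceless on $m^\perp$.
\end{itemize}
In the second case every $\nu_k\ge-\frac{1}{4}\Tr H^2_\mathfrak{n}=\sum_l\nu_l$. Therefore $\nu_i+\nu_j=\sum_l\nu_l-\nu_k\le0$, which is exactly the required sign.
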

	\begin{proof}
		$\mathfrak{r}_{4,-1/2,-1/2}$ is the Lie algebra spanned by $e_1,\dots,e_4$ with non-zero Lie brackets:
		\begin{equation*}
			\mu(e_4,e_1) = e_1 \qquad
			\mu(e_4,e_2) = e_2  \qquad
			\mu(e_4,e_3) = -2e_3.
		\end{equation*}
		Thanks to \cite[Theorem 4.1]{VanThuong2017} every metric on this algebra is isometric to one defined by an orthonormal basis $X_1,\dots,X_4$ and constants $a>0, b\geq0$ for which the bracket relations are:
		\begin{equation*}
			\mu(X_4,X_1) = aX_1, \qquad
			\mu(X_4,X_2) = aX_2, \qquad
			\mu(X_4,X_3) = 3abX_1 - 2aX_3.
		\end{equation*}
		Then, a computation shows that the Ricci operator is represented by the following matrix, with respect to the basis $\{X_1, \dots, X_4\}$:
		\begin{equation*}
			\ric=\begin{bmatrix}
				\frac{9 a^{2} b^{2}}{2} & 0 & -\frac{9 a^{2} b}{2} & 0 
				\\
				0 & 0 & 0 & 0 
				\\
				-\frac{9 a^{2} b}{2} & 0 & -\frac{9 a^{2} b^{2}}{2} & 0 
				\\
				0 & 0 & 0 & -\frac{9}{2} a^{2} b^{2}-6 a^{2} 
			\end{bmatrix}.
		\end{equation*}
		Let $X^1,\dots,X^4$ be the corresponding dual basis and $H=\lambda_4 X^{123}+\lambda_3 X^{124}+\lambda_2 X^{134}+\lambda_1 X^{234}$ a generic 3-form on $\mathfrak{g}$. Using the splitting in Proposition \ref{propLaplaciano} we write $H=X^4\wedge \omega+\eta$, where $\omega=\lambda_3 X^{12}+\lambda_2 X^{13}+\lambda_1 X^{23}$ and $\eta=\lambda_4 X^{123}$. Using the structure equations it is easy to check that $dH=0$. Then the symmetric 2-form $H^2$ has the following structure: 
		\begin{equation}
			\label{H_squared}
			H^2 = 2 \begin{bmatrix}
				\lambda_{4}^{2} + \lambda_{3}^{2} + \lambda_{2}^{2} & \lambda_{2} \lambda_{1} & -\lambda_{3} \lambda_{1} & \lambda_{4} \lambda_{1} \\[6pt]
				\lambda_{2} \lambda_{1} & \lambda_{4}^{2} + \lambda_{3}^{2} + \lambda_{1}^{2} & \lambda_{3} \lambda_{2} & -\lambda_{4} \lambda_{2} \\[6pt]
				-\lambda_{3} \lambda_{1} & \lambda_{3} \lambda_{2} & \lambda_{4}^{2} + \lambda_{2}^{2} + \lambda_{1}^{2} & \lambda_{4} \lambda_{3} \\[6pt]
				\lambda_{4} \lambda_{1} & -\lambda_{4} \lambda_{2} & \lambda_{4} \lambda_{3} & \lambda_{3}^{2} + \lambda_{2}^{2} + \lambda_{1}^{2}
			\end{bmatrix}
		\end{equation}
		A generic derivation of the Lie algebra is:
		\begin{equation*}
			D=\begin{bmatrix}
				m_{11} & m_{1 2} & b \left(m_{1 1}-m_{3 3}\right) & m_{1 4} 
				\\
				m_{2 1} & m_{2 2} & b m_{2 1} & m_{2 4} 
				\\
				0 & 0 & m_{3 3} & m_{3 4} 
				\\
				0 & 0 & 0 & 0 
			\end{bmatrix}
		\end{equation*}
	meaning that we get symmetric derivations if and only if $m_{14} =m_{24}=m_{34}=0$ and either $b \neq 0, m_{11}=m_{33}$ and $m_{21}=m_{12}=0$ or $b=0, m_{21}=m_{12}$. 
	
		Assume first that $\lambda_1=\lambda_2=\lambda_3=0$. Then, from $\ric^B=\lambda\Id+D$ we obtain $b=0$ and $m_{11}=m_{22}=m_{33}=-\frac{\lambda_4^2}{2} +6a^2$. The soliton equation $\Delta H=2\lambda H-\rho(D)H$ now imposes
		\begin{equation*}
			12 a^{2} \lambda_4-3\lambda_4 \left(-\frac{\lambda_4^{2}}{2}+6 a^{2}\right)=0
		\end{equation*}
	which is solved if and only if $\lambda_4=\pm 2a$ ($\lambda_4=0$ leads to $H=0$). As a result this is the only GRS with $\lambda_1=\lambda_2=\lambda_3=0$.
	
	Assume now that at least one of $\lambda_1,\lambda_2,\lambda_3$ is not zero. Then, the first equation in \eqref{algebraic soliton} forces $\lambda_4=0$.
Computing $\langle \ric^B(X_i), X_i\rangle$ and using again the first equation in \eqref{algebraic soliton} yields
\begin{align*}
	\lambda &= -\frac{9}{2}a^2b^2 - 6a^2 - \frac{1}{2}\lambda_3^2 - \frac{1}{2}\lambda_2^2 - \frac{1}{2}\lambda_1^2, \\
	m_{33} &= 6a^2 + \frac{1}{2}\lambda_3^2, \\
	m_{22} &= \frac{9}{2}a^2b^2 + 6a^2 + \frac{1}{2}\lambda_2^2,\\
	m_{11} & =9a^2b^2+ 6a^2 +\frac{1}{2}\lambda_1^2.
\end{align*}

 Assume first that the symmetric derivation is given by $m_{11}=m_{33}$ and $m_{21}=m_{12}=0$. Then, from $\langle \ric^B(X_1), X_2\rangle$ and $\langle \ric^B(X_2), X_3\rangle$ we obtain that either $\lambda_2=0$ or $\lambda_3=\lambda_1=0$. In the former case, $\rho(D)\omega=-(m_{33}+m_{22})\omega$ and in the latter, $\rho(D)\omega=-2m_{33}\omega$. In particular 
 \begin{equation*}
 	2\lambda+m_{33}+m_{22}=-\frac{9}{2}a^2b^2-\frac{\lambda_3^2}{2}-\lambda_1^2\leq0
 \end{equation*} 
 in the first case and
  \begin{equation*}
  2\lambda+2m_{33}=-9a^2b^2-\lambda_2^2\leq0
 \end{equation*} 
 in the second case.
  Since the equation $\rho(\ad_{X_4})\rho(\ad_{X_4}^T)\omega=(2\lambda-\rho(D))\omega$ holds from Proposition \ref{propLaplaciano}, in both cases we would get that the positive semi-definite operator $\rho(\ad_{X_4})\rho(\ad_{X_4}^T)$ has a nonpositive eigenvalue. Then it must be zero and in both cases this reduces to $H=0$.

If instead, $b=0, m_{21}=m_{12}$, then $m_{21}=-\frac{\lambda_2\lambda_1}{2}$ and either $\lambda_3=0$ or $\lambda_1=\lambda_2=0$. If $\lambda_3=0$ the GRS equation $\Delta H=2\lambda H-\rho(D)H$ implies 
\begin{equation*}
	\begin{cases}
		\lambda_2(a^2 + \lambda_2^2 + \lambda_1^2)=0,\\ 
		\lambda_1(a^2 + \lambda_2^2 + \lambda_1^2)=0,
	\end{cases}
\end{equation*}
meaning that $\lambda_1=\lambda_2=0$, so that $H=0$.

If, instead, $\lambda_1=\lambda_2=0$, the second GRS equation is
\begin{equation*}
	\lambda_3\left(-8a^2 - 2\left(-6a^2 - \frac{\lambda_3^2}{2}\right)\right)=0
\end{equation*}
which again, implies $\lambda_3=0$ and $H=0$.
	\end{proof}
	
	\begin{proposition}
	On the Lie algebra $\mathfrak{e}(1,1) \oplus \mathbb{R}$ the unique generalized Ricci soliton with nonzero $H$, up to simultaneous positive scaling of the pair $(\langle \cdot,\cdot\rangle, H)$ and replacement of $H$ with $-H$,  is $(\mathfrak{e}(1,1)\oplus \mathbb{R}, \langle\cdot,\cdot\rangle, H, D, \lambda)$, where:
		\begin{itemize}
			\item the bracket relations are, with respect to a $\langle \cdot,\cdot \rangle $-orthonormal basis $X_1, \dots, X_4$ having dual basis $X^1, \dots, X^4$: 
			\begin{equation*}
				\mu(X_2, X_4)=-X_2, \qquad \mu(X_3, X_4)=X_3;
			\end{equation*} 
			\item $H=\frac{2}{\sqrt{3}}X^{123}$;
			\item $D=\diag\left(\frac{4}{3},\frac{4}{3},\frac{4}{3},0\right)$;
			\item $\lambda=-2$.
		\end{itemize}
	\end{proposition}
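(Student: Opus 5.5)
I will follow the same scheme as in the proof of Proposition \ref{class1}. First I parametrize all inner products on $\mathfrak{e}(1,1)\oplus\mathbb{R}$ up to isometry and scaling. Using the classification of left-invariant metrics on four-dimensional solvable Lie algebras in \cite{VanThuong2017}, every metric is isometric, up to scaling, to one with an orthonormal basis $X_1,\dots,X_4$ in which the nilradical $\mathfrak{n}=\operatorname{span}\{X_1,X_2,X_3\}$ is abelian. In this basis $\ad_{X_4}|_\mathfrak{n}$ is a traceless matrix with eigenvalues $0,\pm a$, written in a normal form with a small number of off-diagonal parameters (for instance $b,c\ge 0$ measuring the failure of the eigenvectors to be orthogonal). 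For this family I then compute $\ric$ using \eqref{eq:lauret formulas} with $U=0$, together with the space of derivations and its symmetric part. I also write a generic $H=X^4\wedge\omega+\eta$ with $\eta=\lambda_4X^{123}$ and $\omega=\lambda_3X^{12}+\lambda_2X^{13}+\lambda_1X^{23}$. Such an $H$ is automatically closed, and $H^2$ is given by \eqref{H_squared}.

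\textbf{Case $\omega=0$.} Here $\omega$ is trivially $\mathfrak{a}$-invariant, so Proposition \ref{teoB} gives $\ad_{X_4}^T\in\Der(\mathfrak{s})$, that is, $[\ad_{X_4},\ad_{X_4}^T]=0$. Hence $\ad_{X_4}|_\mathfrak{n}$ is normal, and being real-diagonalizable it is symmetric. After an orthogonal change of basis this kills the off-diagonal parameters, so $\ad_{X_4}|_\mathfrak{n}=\diag(0,-a,a)$; rescaling gives $a=1$. In this situation $\ric=\diag(0,0,0,-2)$ and $H^2=2\lambda_4^2\diag(1,1,1,0)$. Moreover, the proof of Proposition \ref{teoB} shows that $F_\mathfrak{n}=-\lambda\Id_\mathfrak{n}$, so $D|_\mathfrak{n}=-\lambda\Id-\tfrac{\lambda_4^2}{2}\Id$. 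Comparing the $X_4$ entry gives $\lambda=-2-\tfrac{\lambda_4^2}{2}$, hence $D|_\mathfrak{n}=2\Id$ before normalization. The remark preceding Table \ref{table} gives $2\lambda=-\Tr D|_\mathfrak{n}$. Solving these equations together yields $\lambda_4^2=\tfrac43$, $D=\diag(\tfrac43,\tfrac43,\tfrac43,0)$ and $\lambda=-2$; I will recheck this arithmetic against the stated values. Finally, replacing $H$ by $-H$ fixes the sign of $\lambda_4$.

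\textbf{Case $\omega\neq 0$.} Here I must show there is no soliton, and this is the main obstacle. I would use Proposition \ref{propLaplaciano}, which gives $\rho(\ad_{X_4})\rho(\ad_{X_4}^T)\omega=(2\lambda-\rho(D))\omega$, where the operator on the left is positive semidefinite. The plan is first to extract from the off-diagonal entries of $\ric^B=\lambda\Id+D$ (mixing $\mathfrak{a}$ and $\mathfrak{n}$, where $H^2$ has entries $\lambda_4\lambda_i$) that $\lambda_4=0$, or else that $\omega$ vanishes. I would then use the diagonal entries to express $\lambda$ and the $m_{ii}$ in terms of $a,b,c,\lambda_i$, exactly as in Proposition \ref{class1}. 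The aim is to show that, on each $\rho(D)$-eigencomponent of $\omega$ allowed by the off-diagonal constraints, the eigenvalue $2\lambda+m_{ii}+m_{jj}$ is nonpositive, as happened there. That eigenvalue, however, is an eigenvalue of the positive semidefinite operator $\rho(\ad_{X_4})\rho(\ad_{X_4}^T)$, so it must vanish. This forces $\rho(\ad_{X_4}^T)\omega=0$, which together with the remaining equations forces $\omega=0$, a contradiction.

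The delicate points of this last step are twofold. One is handling the case where the zero eigenvector $X_1$ is not orthogonal to the others. The other is the component $X^{23}$: since $\ad_{X_4}$ acts by $\pm1$ on $X_2,X_3$, the form $X^{23}$ is annihilated by $\rho(\ad_{X_4})$, so the positivity argument gives no strict inequality there. For that component I would instead invoke Proposition \ref{teoB} again, now applied to the $\mathfrak{a}$-invariant part of $\omega$, and combine it with the diagonal equations to reach a contradiction directly.
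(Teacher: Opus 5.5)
Your plan is viable, and for $\omega\neq0$ it is the paper's argument. The mixed entries of $\ric^B=\lambda\Id+D$ force $\lambda_4\lambda_i=0$, hence $\lambda_4=0$, and one then compares the positive semidefinite operator $\rho(\ad_{X_4})\rho(\ad_{X_4}^T)$ with a nonpositive coefficient read off from the diagonal of the first equation. For $\omega=0$ your route differs. Proposition \ref{teoB} applies because $\lambda<0$ by Theorem \ref{teoA}, and it makes $\ad_{X_4}|_{\mathfrak{n}}$ normal, hence symmetric. This removes every metric with non-symmetric $\ad_{X_4}|_{\mathfrak{n}}$ at once. The paper instead kills $b_{12},b_{13},b_{23}$ by hand from the first soliton equation inside each regime of symmetric derivations; it uses your shortcut only for $\mathfrak{r}_{4,-1/2}$ and $\mathfrak{e}(2)\oplus\mathbb{R}$. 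Two steps of your write-up, however, are wrong as stated.

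\textbf{The case $\omega=0$.} The $X_4$-entry of $H^2$ is $2(\lambda_1^2+\lambda_2^2+\lambda_3^2)=0$, as your own formula $H^2=2\lambda_4^2\diag(1,1,1,0)$ shows. So comparing $X_4$-entries gives $\lambda=\ric(X_4,X_4)=-2$, not $-2-\tfrac{\lambda_4^2}{2}$. Also, the operator equal to $2\Id$ on $\mathfrak{n}$ is $F_{\mathfrak{n}}=-\lambda\Id$, not $D|_{\mathfrak{n}}$. The equations you actually wrote, combined with $2\lambda=-\Tr D|_{\mathfrak{n}}$, give $\lambda=-3$, $D|_{\mathfrak{n}}=2\Id$, $\lambda_4^2=2$. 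That is not a soliton, since $\ric^B(X_4,X_4)=-2\neq-3$. The correct chain is $D|_{\mathfrak{n}}=(2-\tfrac{\lambda_4^2}{2})\Id$ and $\Tr D|_{\mathfrak{n}}=-2\lambda=4$, whence $D|_{\mathfrak{n}}=\tfrac43\Id$ and $\lambda_4^2=\tfrac43$. Since you only derived necessary conditions, you should also check (trivially) that these data satisfy both equations.

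\textbf{The case $\omega\neq0$.} The ``obstacle'' at $X^{23}$ is not real, and your fallback is not legitimate. Proposition \ref{teoB} concerns the full $3$-form of a soliton. A piece of $\omega$ does not satisfy the soliton equations, which are quadratic in $H$ through $H^2$, so the proposition cannot be applied to ``the $\mathfrak{a}$-invariant part of $\omega$''.

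Nothing extra is needed anyway. Your splitting into $\rho(D)$-eigencomponents is legitimate: a symmetric derivation commutes with $\ad_{X_4}$ and $\ad_{X_4}^T$, so $\rho(D)$ commutes with $\rho(\ad_{X_4})\rho(\ad_{X_4}^T)$. On the component containing $X^{23}$, the relevant number is strictly negative when $\lambda_1\neq0$. The reason is that $\lambda=\ric^B(X_4,X_4)$ contains $-\tfrac12(\lambda_1^2+\lambda_2^2+\lambda_3^2)$, and the $m_{ii}$ do not compensate it. Concretely:
\begin{itemize}
\item In the paper's regime $m_{11}=m_{22}=m_{33}$ one has $2\lambda+2m_{33}=2\ric^B(X_3,X_3)=-b_{44}^2[(2b_{12}b_{23}-b_{13})^2+4b_{23}^2]-\lambda_1^2-\lambda_2^2$. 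Its vanishing kills $\lambda_1,\lambda_2,b_{13},b_{23}$ at once.
\item For the diagonal metric, the off-diagonal entries first reduce $\omega$ to $\lambda_1X^{23}$. The $X^{23}$-component of the $\omega$-equation then reads $0=(2\lambda+m_{22}+m_{33})\lambda_1=-\lambda_1^3$.
\end{itemize}
So what kills the $X^{23}$-component is the vanishing of that eigenvalue, not $\rho(\ad_{X_4}^T)\omega=0$. A smaller point: your normal form for $\ad_{X_4}|_{\mathfrak{n}}$ needs three off-diagonal parameters, like the paper's $b_{12},b_{13},b_{23}$, not two.
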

	\begin{proof}
		The algebra $\mathfrak{g}:=\mathfrak{e}(1,1) \oplus \mathbb{R}$ is the Lie algebra spanned by $e_1,\dots, e_4$ with non-zero Lie brackets:
		\begin{equation*}
			\mu(e_4,e_2) = e_2, \qquad
			\mu(e_4,e_3) = -e_3.
		\end{equation*}
		Thanks to \cite[Theorem 4.4]{VanThuong2017} every metric on this Lie algebra is isometric to one with orthonormal basis $X_1 \dots X_4$, corresponding dual basis $X^1,\dots, X^4$ and brackets:
		\begin{align*}
			\mu(X_2,X_4) &= b_{12}b_{44}X_1 - b_{44}X_2, \\
			\mu(X_3,X_4) &= b_{44}(2b_{12}b_{23}-b_{13})X_1 - 2b_{23}b_{44}X_2 + b_{44}X_3,
		\end{align*}
		where $b_{44}>0, b_{12}, b_{23}\geq0, b_{13}\in \mathbb{R}$.
		For these brackets the Ricci operator is represented by the following matrix, with respect to the basis $\{X_1, \dots, X_4\}$:
		\begin{equation*}
			\ric=\frac{ b_{44}^2}{2}
			\begin{bmatrix}
				b_{12}^2 + (2b_{12}b_{23} - b_{13})^2 & -(4b_{12}b_{23}^2 - 2b_{13}b_{23} + b_{12}) & 2b_{12}b_{23} - b_{13} & 0 \\
				-(4b_{12}b_{23}^2 - 2b_{13}b_{23} + b_{12}) & -b_{12}^2 + 4b_{23}^2 & -(2b_{12}^2 b_{23} - b_{12}b_{13} + 4b_{23}) & 0 \\
				2b_{12}b_{23} - b_{13} & -(2b_{12}^2 b_{23} - b_{12}b_{13} + 4b_{23}) & -(2b_{12}b_{23} - b_{13})^2 - 4b_{23}^2 & 0 \\
				0 & 0 & 0 & *
			\end{bmatrix}
		\end{equation*}
	with last diagonal entry
		\begin{equation*}
			\ric_4^4 = -\frac{b_{12}^{2} b_{44}^{2}}{2}-\frac{b_{44}^{2} (2 b_{12} b_{23}-b_{13})^{2}}{2}-2 b_{23}^{2} b_{44}^{2}-2 b_{44}^{2}.
		\end{equation*}
		As before let $H=\lambda_4 X^{123}+\lambda_3 X^{124}+\lambda_2 X^{134}+\lambda_1 X^{234}$ be a generic 3-form on $\mathfrak{g}$. Then the endomorphism $H^2$ has the same form as in \eqref{H_squared}. Also,  using the splitting in Proposition \ref{propLaplaciano} we write $H=X^4\wedge \omega+\eta$, where $\omega=\lambda_3 X^{12}+\lambda_2 X^{13}+\lambda_1 X^{23}$ and $\eta=\lambda_4 X^{123}$.
		
		A generic derivation for this Lie algebra is of the form:
		\begin{equation*}
			D = \begin{bmatrix}
				m_{11} & (m_{11}-m_{22}) b_{12} & -b_{23} (m_{22}-m_{33}) b_{12}+b_{13} (m_{11}-m_{33}) & m_{14} \\[6pt]
				0 & m_{22} & (m_{22}-m_{33}) b_{23} & m_{24} \\[6pt]
				0 & 0 & m_{33} & m_{34} \\[6pt]
				0 & 0 & 0 & 0 
			\end{bmatrix}
		\end{equation*}
	so that we get symmetric derivations if $m_{14}=m_{24}=m_{34}=0$ and
	\begin{itemize}
		\item $b_{12}=b_{23}=b_{13}=0$;
		\item $b_{12}=b_{23}=0$ and $m_{11} =m_{33}$;
		\item $m_{11}=m_{22}=m_{33}$.
	\end{itemize}
	We will analyse the cases  $m_{11}=m_{22}=m_{33}$ and $b_{12}=b_{23}=b_{13}=0$. The remaining one can be reduced to one of these two.
		
		In the first case, assume that $\lambda_1=\lambda_2=\lambda_3=0$. Then the first equation in \eqref{algebraic soliton} implies that $b_{13}=b_{23}=b_{12}=0, \lambda=-2b_{44}^2, m_{33}=-\frac{\lambda_4^2}{2}+2b_{44}^2$. At this point the GRS equation $\Delta H=2\lambda H-\rho(D)H$ becomes
		\begin{equation*}
			4b_{44}^2\lambda_4 - 3\lambda_4\left(2b_{44}^2 -\frac{\lambda_4^2}{2}\right)=0
		\end{equation*}
		which is solved if and only if $\lambda_4=\pm \frac{2}{\sqrt{3}}b_{44}$. This is the only non-classical soliton that we get in this situation.
		
		Assume now that at least one of $\lambda_1,\lambda_2,\lambda_3$ is nonzero. This implies $\lambda_4=0$, from $\ric^B=\lambda\Id+D$. Then, computing $\langle\ric^B(X_3),X_3 \rangle$ and $\langle\ric^B(X_4),X_4 \rangle$ yields
		\begin{align*}
			\lambda &= \frac{\left((-4b_{12}^2 - 4)b_{23}^2 + 4b_{12}b_{13}b_{23} - b_{12}^2 - b_{13}^2 - 4\right)b_{44}^2}{2} - \frac{\lambda_3^2}{2} - \frac{\lambda_2^2}{2} - \frac{\lambda_1^2}{2},\\
			m_{33} & = \frac{(b_{12}^2 + 4)b_{44}^2}{2} + \frac{\lambda_3^2}{2}.
		\end{align*}
	In particular,
	 \begin{align*}
		2\lambda+2m_{33}&=((-4b_{12}^2 - 4)b_{23}^2 + 4b_{12}b_{13}b_{23} - b_{13}^2)b_{44}^2 - \lambda_2^2 - \lambda_1^2=\\
		&=-(2b_{12}b_{23}-b_{13})^2-4b_{23}^2 - \lambda_2^2 - \lambda_1^2\leq 0.
	\end{align*} Using this inequality, the fact that $\rho(D)\omega=-2m_{33}\omega$, and the formula  $\rho(\ad_{X_4})\rho(\ad_{X_4}^T)\omega=(2\lambda-\rho(D))\omega$ from Proposition \eqref{propLaplaciano}, we conclude that the positive semi-definite operator $\rho(\ad_{X_4})\rho(\ad_{X_4}^T)$ has a nonpositive eigenvalue. This can only happen if this eigenvalue is zero, that is only if $\lambda_1=\lambda_2=b_{23}=b_{13}=0$. But once again, computing $\langle\ric^B(X_1),X_2 \rangle$  and using the first equation in \eqref{algebraic soliton} forces $b_{12}=0$, given that $b_{44}>0$. Likewise, $\langle\ric^B(X_2),X_2 \rangle$ forces $\lambda_3=0$. As a consequence, $H=0$.
	
	If, instead, we assume $b_{12}=b_{23}=b_{13}=0$, then $\lambda_4=0$ (otherwise we recover the GRS that we already found in the previous case) and the equation $\Delta H=2\lambda H-\rho(D)H$ implies $\lambda_1=\lambda_2=\lambda_3=0$ meaning that $H=0$.
	\end{proof}

\begin{proposition}
	On the Lie algebra $\mathfrak{r}'_{4,-2\mu,\mu}$ the unique generalized Ricci soliton with nonzero $H$, up to simultaneous positive scaling of the pair $(\langle \cdot,\cdot\rangle, H)$ and replacement of $H$ with $-H$, is $(\mathfrak{r}'_{4,-2\mu,\mu},\langle\cdot,\cdot\rangle, H, D, \lambda )$, where:
	\begin{itemize}
		\item  the bracket relations are, with respect to a $\langle\cdot,\cdot\rangle$-orthonormal basis $X_1, \dots, X_4$ having dual basis $X^1, \dots, X^4$:
		\begin{equation*}
			\mu(X_1, X_4)=-\mu X_1+X_2, \qquad \mu(X_2, X_4)=-X_1-\mu X_2, \qquad \mu(X_3, X_4)=2\mu X_3;
		\end{equation*} 
		\item $H= 2\mu X^{123}$;
		\item $D = \diag(4,4,4,0)\mu^2$;
		\item $\lambda=-6\mu^2$.
	\end{itemize}
\end{proposition}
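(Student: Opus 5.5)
The plan follows the template of the two previous propositions. First I will use the classification of left-invariant metrics on $\mathfrak{r}'_{4,-2\mu,\mu}$ up to isometry and scaling from \cite{VanThuong2017}. This gives an orthonormal basis $X_1,\dots,X_4$ in which $\ad_{X_4}|_\mathfrak{n}$ is upper triangular up to the rotational $2\times2$ block, carrying a scale $a>0$ and a few off-diagonal parameters. I will compute the Ricci operator in that basis. I will also parametrize a generic derivation and extract the symmetric ones; they must vanish on $X_4$, since $D(\mathfrak{s})\subset\mathfrak{n}$ and $D$ is symmetric. Because the $\{X_1,X_2\}$ block of $\ad_{X_4}$ is a rotation plus a multiple of the identity, a symmetric derivation should be forced to commute with that block. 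On this block it is therefore a multiple of the identity, and the extra parameters should kill any off-diagonal entry coupling it to $X_3$. I then write $H=\lambda_4X^{123}+\lambda_3X^{124}+\lambda_2X^{134}+\lambda_1X^{234}$, which is automatically closed since $\rho(\ad_{X_4})X^{123}=-\Tr(\ad_{X_4}|_\mathfrak{n})X^{123}=0$. I split it as $H=X^4\wedge\omega+\eta$ as in Proposition \ref{propLaplaciano}, and $H^2$ is given by \eqref{H_squared}.

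\textbf{Case $\lambda_1=\lambda_2=\lambda_3=0$.} The off-diagonal entries of $\ric^B=\lambda\Id+D$ coming from $\ric$ must vanish, which should force the extra metric parameters to be zero. This leaves the diagonal model with $\ric=\diag(0,0,0,-6\mu^2)a^2$: the rotational part contributes nothing because it is skew, and the symmetric part is $\diag(\mu,\mu,-2\mu)$. From the $X_4$ entry we get $\lambda=-6\mu^2a^2$ and $D|_\mathfrak{n}=(6\mu^2a^2-\lambda_4^2/2)\Id$. By the Remark, $2\lambda=-\Tr D|_\mathfrak{n}$, that is $-12\mu^2a^2=-3(6\mu^2a^2-\lambda_4^2/2)$, so $\lambda_4^2=4\mu^2a^2$. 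Scaling to $a=1$ gives the stated soliton.

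\textbf{Case some $\lambda_i\neq0$ with $i\le 3$.} The mixed entries $\langle\ric^B X_i,X_4\rangle=-\frac12\lambda_4\lambda_i$ (up to sign) force $\lambda_4=0$. Next I will compute $\lambda$ from $\langle\ric^BX_4,X_4\rangle$ and the diagonal entries of $D$ from the $\mathfrak{n}$-diagonal of $\ric^B$. Then I use Proposition \ref{propLaplaciano}, which says $\rho(\ad_{X_4})\rho(\ad_{X_4}^T)\omega=(2\lambda-\rho(D))\omega$. Since the left operator is positive semidefinite, I aim to show, as for $\mathfrak{r}_{4,-1/2,-1/2}$, that $2\lambda-\rho(D)$ is nonpositive on the relevant components of $\omega$. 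This holds for each eigen-component of $\rho(D)$ on $\Lambda^2\mathfrak{n}^*$, which in the basis $X^{12},X^{13},X^{23}$ is diagonal with eigenvalues $2\lambda+m_{ii}+m_{jj}$. It then forces $\rho(\ad_{X_4}^T)\omega=0$ together with the vanishing of the negative contributions.

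Alternatively, one can invoke Proposition \ref{teoB} once $\omega$ is known to be invariant. Then $\ad_{X_4}^T$ is a derivation, which removes the extra metric parameters. In the normal model, $\rho(\ad_{X_4}^T)$ acts invertibly on $\Lambda^2\mathfrak{n}^*$, because the eigenvalues $\mu\pm i$ and $-2\mu$ have no pair summing to zero when $\mu>0$. So $\omega=0$ and $H=0$. The main obstacle is this sign check of $2\lambda+m_{ii}+m_{jj}$. The $X^{12}$ component is the delicate one: there $\rho(\ad^T)$ acts by $-2\mu$, and the eigenvalue must be compared precisely, tracking the $\lambda_i^2/2$ terms and the extra metric parameters in $\lambda$ and $m_{ii}$. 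If the direct inequality fails, I would fall back on solving the explicit polynomial system, as in the final subcases of Proposition \ref{class1}.
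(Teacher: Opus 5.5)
Your normal form for the metric is incomplete, and for a statement of uniqueness up to isometry this is a genuine gap. By \cite{VanThuong2017} the inner products on $\mathfrak{r}'_{4,-2\mu,\mu}$ fall into three isometry classes. Only in the first is the $\langle X_1,X_2\rangle$-block of $\ad_{X_4}$ a rotation plus a multiple of the identity. In the other two this block is $b_{44}\left(\begin{smallmatrix}\mu & b_{22}\\ -b_{22}^{-1} & \mu\end{smallmatrix}\right)$ with $0<b_{22}<1$, which is not normal. Normality is invariant under orthogonal conjugation, so no choice of orthonormal basis brings it to your form.

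For these metrics your first case breaks down. The block is not ``skew plus scalar'', and $b_{22}$ is not an off-diagonal parameter that the Ricci equations can kill. For instance, with $b_{13}=b_{23}=0$ the Ricci operator is diagonal, with $\ric|_{\mathfrak n}=\frac{b_{44}^2}{2}(b_{22}^2-b_{22}^{-2})\diag(1,-1,0)$. So there is no off-diagonal entry to exploit, and $\ric$ is not $\diag(0,0,0,-6\mu^2)b_{44}^2$. The case $\lambda_1=\lambda_2=\lambda_3=0$, which is exactly where the soliton lives, is therefore unproved for two of the three classes.

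The repair uses tools you already cite. If $\omega=0$, then $\omega$ is trivially $\mathfrak a$-invariant, so Proposition \ref{teoB} gives $[\ad_{X_4},\ad_{X_4}^T]=0$. This excludes $b_{22}\neq 1$ and forces $b_{13}=0$ in the first class. After that your computation $\lambda_4^2=4\mu^2b_{44}^2$ (your $a=b_{44}$) goes through.

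Your second case works in every class, and the sign check you were unsure about does hold. Write $r_{ii}=\langle\ric^BX_i,X_i\rangle$, and let $B$ be the $\langle X_1,X_2\rangle$-component of $\ad_{X_4}X_3$.
If $B\neq0$, the symmetric derivation $D$ must be scalar on $\mathfrak n$. Indeed, $D$ commutes with $\ad_{X_4}$ and is scalar on $\langle X_1,X_2\rangle$. It must also preserve both $\mathbb{R}X_3$ and the $(-2\mu b_{44})$-eigenline, and these differ when $B\neq0$. Then every eigenvalue of $2\lambda-\rho(D)$ on $\Lambda^2\mathfrak n^*$ equals $2r_{33}=-\|B\|^2-\frac12\langle H^2X_3,X_3\rangle\le 0$.
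If $B=0$, then $\frac12[\ad_{X_4},\ad_{X_4}^T]|_{\mathfrak n}$ has traceless $\langle X_1,X_2\rangle$-block and vanishing $X_3$-entry. Hence $2\lambda+2m_{22}=r_{11}+r_{22}\le0$ and $\lambda+m_{33}=r_{33}\le 0$.

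In both cases $\|\rho(\ad_{X_4}^T)\omega\|^2\le 0$. Finally $\omega=0$, because the eigenvalues of $\rho(\ad_{X_4}^T)$ on $\Lambda^2\mathfrak n^*$ are minus the pairwise sums of $b_{44}(\mu\pm i)$ and $-2\mu b_{44}$. None of these vanishes, for any metric, so no fallback to polynomial systems is needed.
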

\begin{proof}
	This is the Lie algebra defined by brackets
	\begin{equation*}
		\mu(e_4,e_1) =\mu e_1-e_2, \qquad
		\mu(e_4,e_2) =e_1+\mu e_2, \qquad
		\mu(e_4,e_3) =-2\mu e_3,
	\end{equation*}
	depending on the positive parameter $\mu$.
	
	Thanks to \cite{VanThuong2017}, every metric on $\mathfrak{g}:=\mathfrak{r}'_{4,-2\mu,\mu}$ belongs to one among three isometry classes. The first one is defined by an orthonormal basis $X_1,\dots,X_4$, with dual basis $X^1,\dots, X^4$ and brackets
	\begin{align*}
		\mu(X_1,X_4) &= -\mu b_{44}X_1+{b_{44}}X_2,\\
		\mu(X_2,X_4) &= -b_{44}X_1-\mu b_{44}X_2,\\
		\mu(X_3,X_4) &= -3\mu b_{44} b_{13}X_1+b_{44}b_{13} X_2+2\mu b_{44}X_3.
	\end{align*}
	and constants $b_{44}>0, b_{13}\geq0$.
	For these brackets, the Ricci operator is represented by the following matrix, with respect to the basis $\{X_1,\dots,X_4\}$:
	\begin{equation*}
		\ric=\frac{b_{44}^2}{2}
		\begin{bmatrix}
			9 \mu^{2} b_{13}^{2} & -3 \mu b_{13}^{2} & -b_{13} \left(9 \mu^{2}+1\right) & 0 \\
			-3 \mu b_{13}^{2} & b_{13}^{2} & 0 & 0 \\
			-b_{13} \left(9 \mu^{2}+1\right) & 0 & -b_{13}^{2} \left(9 \mu^{2}+1\right) & 0 \\
			0 & 0 & 0 &-\left(9 \mu^{2} b_{1 3}^{2}+12 \mu^{2}+b_{1 3}^{2}\right)
		\end{bmatrix}.
	\end{equation*}
	
	A generic derivation of the Lie algebra is of the form:
	\begin{equation*}
		D= \left[\begin{array}{cccc}
			m_{22} & -m_{21} & b_{13} \left(m_{22}-m_{33}\right) & m_{14} 
			\\
			m_{21} & m_{22} & b_{13} m_{21} & m_{24} 
			\\
			0 & 0 & m_{33} & m_{34} 
			\\
			0 & 0 & 0 & 0 
		\end{array}\right]
	\end{equation*}
	so that we have a symmetric derivation if $m_{21}=0$, $m_{14}=m_{24}=m_{34}=0$ and if either $m_{22}=m_{33}$ or $b_{13}=0$. 
	
	Let $H=\lambda_4 X^{123}+\lambda_3 X^{124}+\lambda_2 X^{134}+\lambda_1 X^{234}$ be a generic $3$-form on $\mathfrak{g}$. Then, the endomorphism $H^2$ has the same form as in \eqref{H_squared}. Also, using the splitting in Proposition \ref{propLaplaciano} we write $H=X^4\wedge \omega+\eta$, where $\omega=\lambda_3 X^{12}+\lambda_2 X^{13}+\lambda_1 X^{23}$ and $\eta=\lambda_4 X^{123}$.
	
	First, let us analyze the case $m_{22}=m_{33}$. Assume that $\lambda_1=\lambda_2=\lambda_3=0$. Then, computing $\langle \ric^B(X_1), X_2\rangle$ and using the first equation in \eqref{algebraic soliton}  forces $b_{13}=0$. Similarly, from the computation of $\langle \ric^B(X_4), X_4\rangle$ and $\langle \ric^B(X_3), X_3\rangle$ we deduce that $\lambda=-6\mu^2b_{44}^2, m_{33}=-\frac{\lambda_4^2}{2}+6\mu^2b_{44}^2$. Now the GRS equation $\Delta H=2\lambda H-\rho(D)H$ forces $\lambda_4=\pm 2\mu b_{44}$, meaning that this is the only non-classical soliton we get in this situation.
	
	Assume now that at least one among $\lambda_1,\lambda_2,\lambda_3$ is nonzero. Then, the first equation in \eqref{algebraic soliton} forces $\lambda_4=0$. Moreover, computing $\langle \ric^B(X_4), X_4\rangle$ and $\langle \ric^B(X_3), X_3\rangle$ yields
	\begin{align*}
		\lambda &= \frac{\left((-9\mu^2 - 1)b_{13}^2 - 12\mu^2\right)b_{44}^2}{2} - \frac{\lambda_3^2}{2} - \frac{\lambda_2^2}{2} - \frac{\lambda_1^2}{2},\\
		m_{33} &= 6\mu^2 b_{44}^2 + \frac{\lambda_3^2}{2}.
	\end{align*} 
In particular, 
\begin{equation*}
	2\lambda+2m_{33}=(-9\mu^2-1)b_{44}^2b_{13}^2-\lambda_2^2-\lambda_1^2\leq 0
\end{equation*} and since $\rho(D)\omega=-2m_{33}\omega$, in the equation $\rho(\ad_{X_4})\rho(\ad_{X_4}^T)\omega=(2\lambda-\rho(D))\omega$ we get that a positive semi-definite operator has a nonpositive eigenvalue. This can only happen if this eigenvalue is zero, that is only if $b_{13}=\lambda_1=\lambda_2=0$. But now, $\langle \ric^B(X_1), X_1\rangle$  forces $\lambda_3=0$, thus $H=0$.
	
	If instead $b_{13}=0$, then $\lambda_4=0$ (otherwise we recover the GRS that we already found in the previous case). But then computing $\langle \ric^B(X_1),X_1\rangle$, $\langle \ric^B(X_2),X_2\rangle$ and applying the first equation in \eqref{algebraic soliton} we discover that $\lambda_1=\lambda_2=0$. Equation $\Delta H=2\lambda H-\rho(D)H$ now forces $\lambda_3=0$, thus $H=0$.

	A similar reasoning can be applied to the remaining isometry classes. The second one is defined by brackets
	\begin{align*}
		\mu(X_1,X_4) &= -\mu b_{44}X_1+\frac{b_{44}}{b_{22}}X_2,\\
		\mu(X_2,X_4) &= -b_{22}b_{44}X_1-\mu b_{44}X_2,\\
		\mu(X_3,X_4) &= b_{44}(-3\mu b_{13}-b_{23})X_1+\frac{b_{44}}{b_{22}}(-3\mu b_{23}+b_{13})X_2+2\mu b_{44}X_3.
	\end{align*}
	and constants $b_{44}>0, b_{13},b_{23}\geq0, 0<b_{22}<1$.
	The third isometry class is defined by the same brackets but with $b_{13}>0, b_{23}<0, b_{44}>0, 0<b_{22}<1$.
	
	Since these isometry classes share their bracket relations, a generic derivation takes the same form in both classes:
	\begin{equation*}
		D=\begin{bmatrix}
	m_{22} & 0 & b_{13} \left(m_{22}-m_{33}\right) & 0 
	\\
	0 & m_{22} & \frac{b_{23} \left(m_{22}-m_{33}\right)}{b_{22}} & 0 
	\\
	0 & 0 & m_{33} & 0 
	\\
	0 & 0 & 0 & 0 
		\end{bmatrix}
	\end{equation*}
	and is symmetric if either $b_{13}=b_{23}=0$ (only possible for the second isometry class) or $m_{22}=m_{33}$.
	
	In the case $m_{22}=m_{33}$, following similar computations as before, we get
	\begin{equation*}
		\begin{split}
			\lambda = -\frac{b_{44}^2}{2b_{22}^2}&( 9\mu^2 b_{13}^2 b_{22}^2 + 6\mu b_{13} b_{22}^2 b_{23} + 12\mu^2 b_{22}^2 + 9\mu^2 b_{23}^2 + b_{22}^4+ \\
			&+ b_{22}^2 b_{23}^2 - 6\mu b_{13} b_{23} + b_{13}^2 - 2b_{22}^2 + 1 ) - \frac{\lambda_3^2}{2} - \frac{\lambda_2^2}{2} - \frac{\lambda_1^2}{2}
		\end{split}
	\end{equation*}
and $D=\diag(m_{33},m_{33},m_{33},0)$, where 
\begin{equation*}
m_{33} = \frac{b_{44}^2}{2b_{22}^2} \left( 12\mu^2 b_{22}^2 + b_{22}^4 - 2b_{22}^2 + 1 \right) - \frac{\lambda_4^2}{2} + \frac{\lambda_3^2}{2}.
\end{equation*}
 Thus,
\begin{equation*}
	2\lambda + 2m_{33} = -\frac{b_{44}^2}{b_{22}^2} \left[ b_{22}^2 (3\mu b_{13} + b_{23})^2 + (3\mu b_{23} - b_{13})^2 \right] - \lambda_4^2 - \lambda_2^2 - \lambda_1^2 \leq 0.
\end{equation*}
This inequality, together with the fact that $\rho(D)\omega=-2m_{33}\omega$ and the formula from Proposition \ref{propLaplaciano},  $\rho(\ad_{X_4})\rho(\ad_{X_4}^T)\omega=(2\lambda-\rho(D))\omega$ allow us to conclude that $\lambda_1=\lambda_2=\lambda_4=0$, $b_{23}=-3\mu b_{13}$, $b_{13}=3\mu b_{23}$. The only possibility is $b_{13}=b_{23}=0$, but then, computing $\langle \ric^B(X_2),X_2\rangle$ and using that $\ric^B-\lambda\Id\in \Der(\mathfrak{g})$ forces $b_{44}=0$ or $b_{22}=1$ and both cases are not allowed.

In the case $b_{13}=b_{23}=0$, suppose first that $\lambda_1=\lambda_2=\lambda_3=0$. Then $\langle \ric^B(X_1),X_1\rangle$ means that $b_{22}^2-1=0$ but we require $b_{22}\in (0,1)$. So at least one among $\lambda_1,\lambda_2,\lambda_3$ is nonzero. This forces $\lambda_4=0$. Moreover, computing the off-diagonal terms $\langle \ric^B(X_i),X_j\rangle$ with $i,j\in\{1,2,3\}$ and using the first equation in \eqref{algebraic soliton} we deduce that two terms among $\lambda_1,\lambda_2,\lambda_3$ are zero.
\begin{itemize}
	\item if $\lambda_1=\lambda_3=0$ from $\langle \ric^B(X_1),X_1\rangle$ we obtain that:
	\begin{equation*}
		2b_{22}^2b_{44}^2-b_{22}^2\lambda_2^2-2b_{44}^2=0.
	\end{equation*}
	This second-degree polynomial equation admits no real solution with $b_{22}\in (0,1)$;
	\item if $\lambda_2=\lambda_3=0$ from the second equation in \eqref{algebraic soliton} we obtain
	\begin{equation*}
		\mu b_{44}^2\lambda_1(b_{22}^2 - 1)=0.
	\end{equation*}
	Hence, we do not find solutions with $\lambda_1 \neq 0$, since $b_{22} \in (0,1)$.
	\item if $\lambda_1=\lambda_2=0$, from $\langle \ric^B(X_1),X_1\rangle$ we obtain that $b_{22}^2-1=0$, impossible to solve for $b_{22}\in(0,1)$;
\end{itemize}
\end{proof}

\begin{proposition}
	On the Lie algebra $\mathfrak{r}_{4,\mu,-1-\mu}$ the unique generalized Ricci soliton with nonzero $H$, up to simultaneous positive scaling of the pair $(\langle \cdot,\cdot\rangle, H)$ and replacement of $H$ with $-H$, is $(\mathfrak{r}_{4,\mu,-1-\mu},\langle\cdot,\cdot\rangle, H, D, \lambda )$, where:
	\begin{itemize}
		\item the  bracket relations are, with respect to a $\langle\cdot,\cdot\rangle$-orthonormal basis $X_1, \dots, X_4$ having dual basis $X^1, \dots, X^4$:
		\begin{equation*}
			\mu(X_1, X_4)=-\mu X_1, \quad \mu(X_2, X_4)=-X_2, \quad \mu(X_3, X_4)=(1+\mu)X_3;
		\end{equation*} 
		\item $H=\frac{2}{3}\sqrt{3\mu^2+3\mu+3} X^{123}$;
		\item $D = \frac{4}{3}(\mu^2+\mu+1)\diag(1,1,1,0)$;
		\item $\lambda=(-2\mu^2-2\mu-2)$.
	\end{itemize}
\end{proposition}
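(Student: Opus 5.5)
We follow the same scheme as in Proposition \ref{class1} and the subsequent propositions. First we will invoke the classification of left-invariant metrics on $\mathfrak{r}_{4,\mu,-1-\mu}$ up to isometry from \cite{VanThuong2017}. This gives one (or a few) families of orthonormal bases $X_1,\dots,X_4$ with $X_4$ acting on the abelian nilradical $\mathfrak{n}=\operatorname{span}\{X_1,X_2,X_3\}$. Since the eigenvalues $\mu,1,-1-\mu$ are distinct, we expect $\ad_{X_4}|_\mathfrak{n}$ to be upper triangular: diagonal $b_{44}(-\mu,-1,1+\mu)$ plus off-diagonal parameters $b_{12},b_{13},b_{23}$, with $b_{44}>0$. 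For this family we will compute three things. The first is the Ricci operator, which has a block form with $\ric(X_4)\parallel X_4$. The second is $H^2$, which has the same form as in \eqref{H_squared}. The third is the space of derivations; a derivation is upper triangular on $\mathfrak{n}$, with entries in the $b_{ij}$ determined by differences $m_{ii}-m_{jj}$, and zero last row. Symmetric derivations then force $m_{i4}=0$, and either the relevant $b_{ij}$ vanish or the corresponding $m_{ii}$ coincide. We will list these subcases as in the $\mathfrak{e}(1,1)\oplus\mathbb{R}$ case.

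Write $H=X^4\wedge\omega+\eta$ with $\omega=\lambda_3X^{12}+\lambda_2X^{13}+\lambda_1X^{23}$ and $\eta=\lambda_4X^{123}$; closedness is automatic. In the case $\lambda_1=\lambda_2=\lambda_3=0$, the off-diagonal entries of $\ric^B=\lambda\Id+D$ will force $b_{12}=b_{13}=b_{23}=0$, and then $D=\diag(m,m,m,0)$. The $(4,4)$ entry gives $\lambda=-(\mu^2+1+(1+\mu)^2)b_{44}^2=-2(\mu^2+\mu+1)b_{44}^2$. The $(3,3)$ entry gives $m=-\lambda-\lambda_4^2/2$; the other two diagonal entries must match it, which holds since $\ric|_\mathfrak{n}=0$ when $\ad_{X_4}$ is symmetric and traceless. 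By the Remark, $2\lambda=-\Tr D|_\mathfrak{n}=-3m$. Substituting, $3\lambda_4^2/2=-\lambda$, so $\lambda_4^2=\tfrac43(\mu^2+\mu+1)b_{44}^2$. Scaling to $b_{44}=1$ then yields exactly the stated $H$, $D=\tfrac43(\mu^2+\mu+1)\diag(1,1,1,0)$ and $\lambda=-2(\mu^2+\mu+1)$.

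In the case where some $\lambda_i\neq0$ ($i\le 3$), the off-diagonal entries coupling $X_4$ with $\mathfrak{n}$ in $H^2$ (namely $\lambda_4\lambda_i$) force $\lambda_4=0$. We then use Proposition \ref{propLaplaciano}: $\rho(\ad_{X_4})\rho(\ad_{X_4}^T)\omega=(2\lambda-\rho(D))\omega$, where the left-hand operator is positive semidefinite. When the $m_{ii}$ coincide, $\rho(D)\omega=-2m_{33}\omega$. Computing $2\lambda+2m_{33}$ from the $(4,4)$ and $(3,3)$ entries, we expect a negative sum of squares of the $b_{ij}$ (times $b_{44}^2$) and of $\lambda_1,\lambda_2$. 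This will force those quantities to vanish, after which a diagonal entry of $\ric^B$ kills $\lambda_3$. When instead the $b_{ij}$ vanish but the $m_{ii}$ differ, the metric is diagonal and $D=\diag(m_{11},m_{22},m_{33},0)$. The off-diagonal entries of $H^2$ (such as $\lambda_2\lambda_1$) must vanish, so at most one $\lambda_i$ survives. For each such $\omega=X^{jk}$, the $\omega$-equation reads $(a_j+a_k)^2=2\lambda+m_{jj}+m_{kk}$, where $a_j$ is the $j$th eigenvalue. Combining this with the diagonal Ricci entries, using $\ric|_\mathfrak{n}=0$ and that $H^2$ adds $2\lambda_i^2$ on $X_j,X_k,X_4$, will yield a contradiction unless $\lambda_i=0$.

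The main obstacle is this last mixed subcase, together with the bookkeeping of all symmetric-derivation subcases. Generic $b_{ij}$ make the algebra heavy, and the distinct eigenvalues prevent the simplifications available for $\mathfrak{r}_{4,-1/2,-1/2}$. We will handle it, as in the previous propositions, by reducing every partially degenerate subcase to either the fully equal-$m_{ii}$ case or the diagonal case before computing.
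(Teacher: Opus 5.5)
Your proposal is correct and follows essentially the same route as the paper. The shared steps are: the metric normal form from \cite{VanThuong2017}; diagonal symmetric derivations with $m_{i4}=0$; the case $\omega=0$, giving $b_{12}=b_{13}=b_{23}=0$ and $\lambda_4^2=\frac43(\mu^2+\mu+1)b_{44}^2$ via $2\lambda=-\Tr D|_{\mathfrak n}$; $\lambda_4=0$ from the $X_4$-row of $H^2$ when $\omega\neq0$; the positive-semidefiniteness argument from Proposition \ref{propLaplaciano} when $m_{11}=m_{22}=m_{33}$; and the direct eigenvalue check in the diagonal case. The only imprecision is your plan to \emph{reduce} the mixed derivation subcases (e.g.\ $b_{12}=b_{13}=0\neq b_{23}$, $m_{22}=m_{33}\neq m_{11}$) to those two cases: there $\langle\ric^B X_2,X_3\rangle=0$ forces $\lambda_2\lambda_3=-(\mu+2)^2b_{23}b_{44}^2\neq0$, so no reduction is possible, but the same positivity argument still applies, because $m_{22}=m_{33}$ gives $2(\mu+2)^2b_{23}^2b_{44}^2=\lambda_3^2-\lambda_2^2$ and hence the relevant eigenvalue is $2\lambda+m_{11}+m_{22}=-\frac34(\lambda_2^2+\lambda_3^2)<0$, which is what the paper's ``solved analogously'' covers.
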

	\begin{proof}
	The algebra $\mathfrak{g}:=\mathfrak{r}_{4,\mu,-1-\mu}$ is the Lie algebra defined by bracket relations:
	\begin{equation*}
		\mu(e_4,e_1) = \mu e_1, \qquad
		\mu(e_4,e_2) = e_2, \qquad
		\mu(e_4,e_3) =(-1-\mu)e_3.
	\end{equation*}
	Thanks to \cite[Theorem 4.4]{VanThuong2017} every metric on this Lie algebra is isometric to one with orthonormal basis $X_1 \dots X_4$, corresponding dual basis $X^1,\dots, X^4$ and brackets:
	\begin{align*}
		&\mu(X_1,X_4)=-b_{44}\mu X_1, \qquad  \qquad
		\mu(X_2,X_4)= b_{12}b_{44}(1-\mu)X_1 - b_{44}X_2, \\
		&\mu(X_3,X_4) = b_{44}(b_{12}b_{23}(2 + \mu) - b_{13}(1 + 2\mu))X_1 +b_{23}b_{44}(-\mu-2)X_2 + b_{44}(1+\mu)X_3.
	\end{align*}
	where $b_{44}>0, b_{12}, b_{23}\geq0, b_{13}\in \mathbb{R}$.
	For these brackets the Ricci operator is the symmetric endomorphism with the following nonzero entries:
	\begin{align*}\ric_1^1 &=\frac{b_{44}^2 }{2}\left[ \left(\mu^2 b_{23}^2 + 4\mu b_{23}^2 + \mu^2 + 4b_{23}^2 - 2\mu + 1\right)b_{12}^2 - (2 + 4\mu)(2 + \mu)b_{12}b_{13}b_{23} + 4\left(\frac{1}{2} + \mu\right)^2 b_{13}^2\right], \\
		\ric_1^2 &= -\frac{b_{44}^2 }{2}\left(\left((2 + \mu)^2 b_{23}^2 + (\mu - 1)^2\right)b_{12} - 2\left(\frac{1}{2} + \mu\right)(2 + \mu)b_{13}b_{23}\right), \\
		\ric_1^3 &= b_{44}^2\left(\frac{1}{2} + \mu\right)\left(\mu(b_{12}b_{23} - 2b_{13}) + 2b_{12}b_{23} - b_{13}\right),\\
		\ric_2^2 &= -\frac{b_{44}^2 }{2}\left(\mu(b_{12} + b_{23}) - b_{12} + 2b_{23}\right)\left(\mu(b_{12} - b_{23}) - b_{12} - 2b_{23}\right),\\
		\ric_2^3 &=\frac{b_{44}^2 }{2}\left[ (2 + \mu)(\mu - 1)b_{12}^2 b_{23} + (-2\mu^2 + \mu + 1)b_{12}b_{13} - (2 + \mu)^2 b_{23}\right],\\
		\ric_3^3 &= -\frac{b_{44}^2 }{2}\left((b_{12}^2 + 1)(2 + \mu)^2 b_{23}^2 - (2 + 4\mu)(2 + \mu)b_{12}b_{13}b_{23} + 4\left(\frac{1}{2} + \mu\right)^2 b_{13}^2\right),\\
		\ric_4^4 &=
		-\mu^2 b_{44}^2 - \frac{b_{12}^2 b_{44}^2 (-1 + \mu)^2}{2} - \frac{b_{44}^2 (b_{12} b_{23} (2 + \mu) - b_{13} (1 + 2\mu))^2}{2} \\&- b_{44}^2 - \frac{b_{23}^2 b_{44}^2 (2 + \mu)^2}{2} - b_{44}^2 (1 + \mu)^2.
	\end{align*}
Let $H=\lambda_4 X^{123}+\lambda_3 X^{124}+\lambda_2 X^{134}+\lambda_1 X^{234}$ be a generic $3$-form on $\mathfrak{g}$. Then, the endomorphism $H^2$ has the same form as in \eqref{H_squared}. Also, using the splitting in Proposition \ref{propLaplaciano} we write $H=X^4\wedge \omega+\eta$, where $\omega=\lambda_3 X^{12}+\lambda_2 X^{13}+\lambda_1 X^{23}$ and $\eta=\lambda_4 X^{123}$.

	A generic derivation for this Lie algebra is of the form:
	\begin{equation*}
		D =
		\begin{bmatrix}
			m_{11} & (m_{11}-m_{22}) b_{12} & -b_{23} (m_{22}-m_{33}) b_{12}+b_{13} (m_{11}-m_{33}) & m_{14} \\
			0 & m_{22} & b_{23} (m_{22}-m_{33}) & m_{24} \\
			0 & 0 & m_{33} & m_{34} \\
			0 & 0 & 0 & 0 
        \end{bmatrix}.
	\end{equation*}
In order that $D$ be symmetric it is necessary that $m_{14}=m_{24}=m_{34}=0$. We will analyse the case $m_{11}=m_{22}=m_{33}$ and the case $b_{13}=b_{12}=b_{23}=0$. All the other cases that yield symmetric derivations either reduce to subcases of these two or are solved analogously.
	
  	In the case that $m_{11}=m_{22}=m_{33}$, assume $\lambda_1=\lambda_2=\lambda_3=0$. Then, by computing $\langle \ric^B(X_4),X_4\rangle$, $\langle \ric^B(X_3),X_3\rangle$ and using the first equation in \eqref{algebraic soliton} we obtain:
  	\begin{align*}
  		\lambda &=-\mu^2b_{44}^2-\frac{b_{12}^2b_{44}^2(-1+\mu)^2}{2}-\frac{b_{44}^2(b_{12}b_{23}(2+\mu)-b_{13}(1+2\mu))^2}{2}\\
  		&- \frac{b_{44}^2-b_{23}^2b_{44}^2(2+\mu)^2}{2}-b_{44}^2(1+\mu)^2,\\
  		m_{33} &= \frac{(\mu^2 b_{12}^2 - 2\mu b_{12}^2 + 4\mu^2 + b_{12}^2 + 4\mu + 4)b_{44}^2}{2} - \frac{\lambda_4^2}{2}.
  	\end{align*}
  	Applying the same logic, we compute $\langle \ric^B(X_1),X_3\rangle$ and use equation \eqref{algebraic soliton} to conclude that $b_{13}=\frac{b_{12} b_{23} (2 + \mu)}{1 + 2\mu}$. But now, $\langle \ric^B(X_1),X_2\rangle$ and $\langle \ric^B(X_2),X_3\rangle$ imply that $b_{23}=b_{12}=0$. This means that $b_{13}=0$ and $\lambda=-2b_{44}^2(\mu^2+\mu+1)$.
  	 At this point the GRS equation $\Delta H=2\lambda H-\rho(D)H$ implies that $\lambda_4=\pm \frac{2}{3}\sqrt{3\mu^2+3\mu+3}b_{44}$, so that $m_{33}=\frac{4}{3}b_{44}^2(\mu^2+\mu +1))$. It is easy to check that the equations \eqref{algebraic soliton} hold for these values of $\lambda,\lambda_4, m_{33}$. This proves that we found the only non-classical GRS in this case.
  	 
  	As a result, we can assume that at least one among $\lambda_1,\lambda_2,\lambda_3$ is nonzero. Computing $\langle \ric^B(X_i),X_4\rangle$ for $i\in{1,2,3}$, we see that this implies $\lambda_4=0$. Then, computing $\langle \ric^B(X_4),X_4\rangle$ and $\langle \ric^B(X_3),X_3\rangle$ and using the first equation in \eqref{algebraic soliton} we deduce 
  	\begin{align*}
  		\lambda &= -\mu^2 b_{44}^2 - \frac{b_{12}^2 b_{44}^2 (-1 + \mu)^2}{2} - \frac{b_{44}^2 \left(b_{12}b_{23}(2 + \mu) - b_{13}(1 + 2\mu)\right)^2}{2} - b_{44}^2\\
  		& - \frac{b_{23}^2 b_{44}^2 (2 + \mu)^2}{2} - b_{44}^2 (1 + \mu)^2 - \frac{\lambda_3^2}{2} - \frac{\lambda_2^2}{2} - \frac{\lambda_1^2}{2},\\
  		m_{33} & = \frac{\left(\mu^2 b_{12}^2 - 2\mu b_{12}^2 + 4\mu^2 + b_{12}^2 + 4\mu + 4\right)b_{44}^2}{2} + \frac{\lambda_3^2}{2}.
  	\end{align*}
  In particular,
\begin{equation*}
	2\lambda + 2m_{33}=-b_{44}^2 \left( \left[(\mu+2)b_{12}b_{23}-(2\mu + 1)b_{13} \right]^2+(\mu+2)^2 b_{23}^2 \right)-\lambda_2^2-\lambda_1^2 \leq 0
\end{equation*}
   and since $\rho(D)X^{ij}=-2m_{33}X^{ij}$, if we write $\omega:=\sum \lambda_i X^{jk}$, in the equation $\rho(\ad_{X_4})\rho(\ad_{X_4}^T)\omega=(2\lambda-\rho(D))\omega$ we would get that a positive semi-definite operator has a nonpositive eigenvalue. This can only happen if it is zero, that is only if $\lambda_1=\lambda_2=b_{23}=b_{13}=0$. But now, computing $\langle \ric^B(X_1),X_2\rangle$, $\langle \ric^B(X_1),X_1\rangle$ and using the first equation in \eqref{algebraic soliton} we see that $b_{12}=0$ and  $\lambda_3=0$. This implies $H=0$.
  
  In the case that $b_{13}=b_{12}=b_{23}=0$, then $\lambda_4=0$ (otherwise we go back to the previous case). But then $\Delta H=2\lambda H-\rho(D)H$ forces $\lambda_1=\lambda_2=\lambda_3=0$, meaning $H=0$.
\end{proof}

	\begin{proposition}
	The Lie algebra $\mathfrak{r}_{4,-1/2}$ does not admit any algebraic generalized Ricci soliton with $H\neq 0$.
\end{proposition}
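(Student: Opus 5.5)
We follow the same scheme as in the previous propositions for algebras with abelian nilradical. Recall that $\mathfrak{r}_{4,-1/2}$ is spanned by $e_1,\dots,e_4$ with $\mu(e_4,e_1)=e_1$, $\mu(e_4,e_2)=e_1+e_2$, $\mu(e_4,e_3)=-2e_3$. The key structural difference from the diagonalizable cases is that $\ad_{e_4}|_\mathfrak{n}$ has a nontrivial Jordan block.

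\textbf{Step 1: normal forms and symmetric derivations.} First we invoke the classification of inner products up to isometry and automorphism in \cite{VanThuong2017}. This gives an orthonormal basis $X_1,\dots,X_4$ in which
\begin{equation*}
\ad_{X_4}|_\mathfrak{n}=b_{44}\begin{bmatrix}1&c&*\\0&1&*\\0&0&-2\end{bmatrix},
\end{equation*}
with $c\neq 0$ and finitely many further parameters. In this basis we compute $\ric$ and write a generic $H=X^4\wedge\omega+\eta$ as before, so that $H^2$ is given by \eqref{H_squared}. Next we determine the symmetric derivations. A derivation commutes with $\ad_{X_4}|_\mathfrak{n}$ modulo the $\mathfrak{n}$-valued part. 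The Jordan block forces $D$ to be upper triangular on the generalized eigenspace $\langle X_1,X_2\rangle$, with equal diagonal entries. Symmetry then forces $D=\diag(m,m,m',0)$, with $m=m'$ whenever the $*$ entries are nonzero.

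\textbf{Step 2: the case $\omega=0$, i.e.\ $H=\lambda_4X^{123}$.} Here $H^2|_\mathfrak{n}$ is a multiple of the identity. The equation $\ric^B=\lambda\Id+D$ restricted to $\mathfrak{n}$ then says that $\ric_\mathfrak{n}$ is diagonal with equal entries on $X_1,X_2$. By \eqref{eq:lauret formulas}, $\ric|_\mathfrak{n}=\tfrac12[\ad_{X_4},\ad_{X_4}^T]$. For a genuine Jordan block the off-diagonal entry $c$ makes this commutator's $(1,1)$ and $(2,2)$ entries differ (they are $\pm\tfrac{c^2}{2}b_{44}^2$), or produces a nonzero off-diagonal term, so we obtain a contradiction with $c\neq 0$. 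An alternative, more conceptual route: if $\omega=0$, then $\omega$ is trivially $\mathfrak{a}$-invariant, so Proposition \ref{teoB} gives $\ad_A^T\in\Der(\mathfrak{s})$, and the proof of Proposition \ref{teoB} shows $[\ad_A,\ad_A^T]=0$. Thus $\ad_A|_\mathfrak{n}$ would be normal, hence semisimple, contradicting the Jordan block. I would present this second argument, since it is cleaner.

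\textbf{Step 3: the case $\omega\neq 0$.} The first soliton equation, through the entries $\langle\ric^B X_i,X_4\rangle$, forces $\lambda_4=0$. Then, as in the previous propositions, we compute $\lambda$ from $\langle\ric^B X_4,X_4\rangle$ and $m'$ (or $m$) from a diagonal entry. We then show that $2\lambda-\rho(D)$ acting on $\omega$ has nonpositive eigenvalues, i.e.\ $2\lambda+2m'\le 0$ or $2\lambda+m+m'\le0$. This should come out as a negative sum of squares involving $c$ and the other parameters. Proposition \ref{propLaplaciano} says that the positive semidefinite operator $\rho(\ad_{X_4})\rho(\ad_{X_4}^T)$ applied to $\omega$ equals this. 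Hence both sides vanish, which forces $\rho(\ad_{X_4}^T)\omega=0$ together with the vanishing of the squares. The vanishing of the squares includes $c=0$, which is a contradiction; otherwise it makes $\omega$ invariant and we fall back into the argument of Step 2.

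\textbf{Main obstacle.} The hardest step is verifying in Step 3 that $2\lambda-\rho(D)$ restricted to the relevant components of $\omega$ is indeed a nonpositive sum of squares. Unlike the earlier diagonal cases, $D$ may have $m\neq m'$, so the components $X^{12}$ and $X^{13},X^{23}$ must be treated separately. If this inequality fails in some subcase, I would instead solve the remaining polynomial system in $\lambda_1,\lambda_2,\lambda_3$ directly, using the off-diagonal entries of $\ric^B$.
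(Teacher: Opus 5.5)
Your Steps 1 and 2, and the derivation of $\lambda_4=0$ from the $(i,4)$ entries, are correct and match the paper. Your normality argument for $\omega=0$ is the paper's ``$b_{22}b_{44}=0$'' argument via Proposition \ref{teoB}; your $c$ is the paper's $b_{22}$.

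\textbf{The gap is the end of Step 3}, which is where the substance of the proof lies. You only assert the key inequality (it ``should come out''), and the way you then conclude does not work. Write the $*$ entries of $\ad_{X_4}|_{\mathfrak n}$ as $b_{44}p$ and $b_{44}q$. In the case $m=m'$, the diagonal entry that yields a sign (the one the paper uses) is $\lambda+m=\langle\ric^B X_3,X_3\rangle=-\tfrac{b_{44}^2}{2}(p^2+q^2)-\tfrac12(\lambda_1^2+\lambda_2^2)$. It contains no $c$, so its vanishing gives $p=q=\lambda_1=\lambda_2=0$, not $c=0$. Your fallback also fails. The condition $\rho(\ad_{X_4}^T)\omega=0$ is not $\mathfrak a$-invariance, and it cannot be fed into Proposition \ref{teoB}. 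In Lemma \ref{Lemma} the $\omega$-term is $-2\big(\|\rho(\ad_A^T)\omega\|^2-\|\rho(\ad_A)\omega\|^2\big)$, which has the wrong sign when only $\rho(\ad_A^T)\omega=0$ is known. Two smaller slips: the eigenvalue of $2\lambda-\rho(D)$ on $X^{12}$ is $2\lambda+2m$, not $2\lambda+2m'$; and you need nonpositivity on every component of $\omega$, not one inequality ``or'' the other.

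\textbf{How to close it.} The missing observation is that $\rho(\ad_{X_4}^T)$ is injective on $\Lambda^2\mathfrak n^*$. Its eigenvalues are $-(e_i+e_j)$ for $i<j$, where $e_i\in\{b_{44},b_{44},-2b_{44}\}$ are the eigenvalues of $\ad_{X_4}|_{\mathfrak n}$; these are $-2b_{44},b_{44},b_{44}$, all nonzero. Hence $\rho(\ad_{X_4})\rho(\ad_{X_4}^T)$ is positive definite, and $\rho(\ad_{X_4}^T)\omega=0$ already forces $\omega=0$.

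Alternatively, do as the paper does. Once $p=q=\lambda_1=\lambda_2=0$, compare $\langle\ric^B X_1,X_1\rangle=\tfrac{b_{44}^2c^2}{2}-\tfrac{\lambda_3^2}{2}$ with $\langle\ric^B X_2,X_2\rangle=-\tfrac{b_{44}^2c^2}{2}-\tfrac{\lambda_3^2}{2}$. Both must equal $\lambda+m$, and this is where $c\neq0$ actually produces the contradiction.

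The inequality you were unsure about does hold in the other subcase. If $m\neq m'$, then $p=q=0$, and the entries $\langle\ric^B X_2,X_2\rangle$ and $\langle\ric^B X_3,X_3\rangle$ give $2\lambda+2m=-b_{44}^2c^2-\lambda_1^2-\lambda_3^2<0$ and $2\lambda+m+m'<0$. There $c$ does enter, and the positivity argument finishes at once. The paper instead solves $\Delta H=2\lambda H-\rho(D)H$ directly in that subcase.
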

\begin{proof}
	$\mathfrak{g}=\mathfrak{r}_{4,-1/2}$ is the Lie algebra defined by
	\begin{equation*}
		\mu(e_4,e_1)=e_1, \qquad
		\mu(e_4,e_2)=e_1+e_2, \qquad
		\mu(e_4,e_3)=-2e_3.
	\end{equation*}
	Every metric on $\mathfrak{g}$ is isometric to one defined by an orthonormal basis $X_1,\dots,X_4$ and brackets
	\begin{align*}
		\mu(X_1,X_4)&=-b_{44}X_1,\\
		\mu(X_2,X_4)&=-b_{22}b_{44}X_1-b_{44}X_2,\\
		\mu(X_3,X_4)&=b_{44}(-3b_{13}-b_{23})X_1-3\frac{b_{23}b_{44}}{b_{22}}X_2+2b_{44}X_3,
	\end{align*}
	with $b_{22},b_{44}>0, b_{13}\geq0, b_{23}\in \mathbb{R}$.
	
	Let $H=\lambda_4 X^{123}+\lambda_3 X^{124}+\lambda_2 X^{134}+\lambda_1 X^{234}$ be a generic $3$-form on $\mathfrak{g}$. Then, the endomorphism $H^2$ has the same form as in \eqref{H_squared}. Also, using the splitting in Proposition \ref{propLaplaciano} we write $H=X^4\wedge \omega+\eta$, where $\omega=\lambda_3 X^{12}+\lambda_2 X^{13}+\lambda_1 X^{23}$ and $\eta=\lambda_4 X^{123}$.
	
	A generic derivation of this Lie algebra is of the form:
	\begin{equation*}
		D= \begin{bmatrix}
			m_{22} & m_{12} & \frac{m_{22} b_{22} b_{13}-m_{33} b_{22} b_{13}+m_{12} b_{23}}{b_{22}} & m_{14} \\[6pt]
			0 & m_{22} & \frac{b_{23} (m_{22}-m_{33})}{b_{22}} & m_{24} \\[6pt]
			0 & 0 & m_{33} & m_{34} \\[6pt]
			0 & 0 & 0 & 0 
		\end{bmatrix}
	\end{equation*}
so that we only have symmetric derivations for $m_{12}=0$ and either $m_{22}=m_{33}$ or $b_{23}=b_{13}=0$.
Then $\mathfrak{g}= \langle X_1,X_2,X_3\rangle \oplus X_4\mathbb{R}$ where the first summand is the abelian nilradical $\mathfrak{n}$. Now note that
\begin{equation*}
	\ad_{X_4}|_{\mathfrak{n}}=\begin{bmatrix} 
		   b_{44} & b_{22}b_{44} & b_{44}(3b_{13}+b_{23}) \\
		   0 & b_{44} & 3b_{23}b_{44}b_{22}^{-1} \\
		   0 & 0 & -2b_{44} 
	  \end{bmatrix}
\end{equation*}
In view of Proposition \ref{teoB} we do not have solitons for $H=\lambda_4 X^{123}$, since $\ad_{X_4}^T \in \Der(\mathfrak{g})$ can only happen if $b_{22}b_{44}=0$, which is impossible. In particular we cannot have $\lambda_1=\lambda_2=\lambda_3=0$.

The Ricci operator is represented by the following matrix, with respect to the basis $\{X_1,\dots, X_4\}$:
\begin{equation*}
\frac{b_{44}^2}{2}
\begin{bmatrix}
	b_{22}^{2} + \left(3 b_{13}+b_{23}\right)^{2} & 3 \left(3 b_{13}+b_{23}\right) b_{23} b_{22}^{-1} & -3 \left(3 b_{13}+b_{23}\right) & 0 \\
	3 \left(3 b_{13}+b_{23}\right) b_{23} b_{22}^{-1} & -\left(b_{22}^{4}-9 b_{23}^{2}\right) b_{22}^{-2} & -3 \left(\left(b_{13}+\frac{b_{23}}{3}\right) b_{22}^{2}+3 b_{23}\right) b_{22}^{-1} & 0 \\
	-3 \left(3 b_{13}+b_{23}\right) & -3 \left(\left(b_{13}+\frac{b_{23}}{3}\right) b_{22}^{2}+3 b_{23}\right) b_{22}^{-1} & -9 \left(\left(b_{13}+\frac{b_{23}}{3}\right)^{2} b_{22}^{2}+b_{23}^{2}\right) b_{22}^{-2} & 0 \\
	0 & 0 & 0 & \ast
\end{bmatrix}
\end{equation*}
where the last diagonal entry is $\ric^4_4=-12 - b_{22}^{2} - \left(3 b_{13}+b_{23}\right)^{2} - \frac{9 b_{23}^{2}}{b_{22}^{2}}$.
In order that $\ric-\lambda\Id -\frac{1}{4}H^2$ be a derivation, the last row forces $\lambda_4=0$ because we established that at least one of $\lambda_1,\lambda_2,\lambda_3$ is nonzero.

In case $m_{22}=m_{33}$, we can make use of Proposition \ref{propLaplaciano}, recalling that $\rho(\ad_{X_4})\rho(\ad_{X_4}^T) \omega=(2\lambda-\rho(D))\omega$. In particular, note that $\rho(D)\omega=-2m_{22}\omega$. By computing $\langle \ric^B(X_4),X_4 \rangle$, $\langle \ric^B(X_3),X_3 \rangle$ and using the first equation in \eqref{algebraic soliton} we deduce that
\begin{align*}
	\lambda &= -6 b_{44}^{2} - \frac{b_{22}^{2} b_{44}^{2}}{2} - \frac{b_{44}^{2} \left(3 b_{13} + b_{23}\right)^{2}}{2} - \frac{9 b_{23}^{2} b_{44}^{2}}{2 b_{22}^{2}} - \frac{\lambda_{3}^{2}}{2} - \frac{\lambda_{2}^{2}}{2} - \frac{\lambda_{1}^{2}}{2} \\
	m_{22} &= \frac{\left(b_{22}^{2} + 12\right) b_{44}^{2}}{2} + \frac{\lambda_{3}^{2}}{2}.
\end{align*}
 Since $\rho(\ad_{X_4})\rho(\ad_{X_4}^T)$ is a positive semi-definite operator and
 \begin{equation*}
 	2\lambda+2m_{22}=\frac{\left[-9\left(b_{13} + \frac{b_{23}}{3}\right)^2 b_{44}^2 -\lambda_2^2 - \lambda_1^2\right]b_{22}^2 - 9b_{23}^2 b_{44}^2}{b_{22}^2}\leq 0
 \end{equation*} 
 we deduce that this eigenvalue must be zero and that happens only if $\lambda_1=\lambda_2=b_{23}=b_{13}=0$. But now, computing $\langle \ric^B(X_1),X_1 \rangle$ and $\langle \ric^B(X_2),X_2 \rangle$ it is easy to see that we do not find any solution with positive $b_{22},b_{44}$.
 
 	If instead, $b_{23}=b_{13}=0$, then $\lambda_4=0$ (otherwise we reduce to the previous case) and the equation $\Delta H=2\lambda H-\rho(D)H$ implies $\lambda_1=\lambda_2=\lambda_3=0$ meaning that $H=0$.
\end{proof}

\begin{proposition}
	The Lie algebra $\mathfrak{e}(2) \oplus \mathbb{R}$ does not admit any algebraic generalized Ricci soliton with $H\neq 0$.
\end{proposition}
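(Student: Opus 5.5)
We plan to follow the same scheme as for the other algebras with abelian nilradical. We combine the normal forms of metrics from \cite{VanThuong2017} with Proposition \ref{propLaplaciano} and the positivity argument used above. Recall that $\mathfrak{e}(2)\oplus\mathbb{R}$ is spanned by $e_1,\dots,e_4$ with $\mu(e_4,e_1)=-e_2$ and $\mu(e_4,e_2)=e_1$, that $e_3$ is central, and that the nilradical is $\mathfrak{n}=\langle e_1,e_2,e_3\rangle$. First we take from \cite{VanThuong2017} the list of isometry classes of inner products. We expect an orthonormal basis $X_1,\dots,X_4$ with $\mathfrak{n}=\langle X_1,X_2,X_3\rangle$ and $\ad_{X_4}|_\mathfrak{n}$ of the form $b_{44}$ times a matrix with a rotation-type block on $\langle X_1,X_2\rangle$ (with a squeeze parameter $b_{22}\in(0,1]$), zero eigenvalue along a vector mixing $X_3$ with $X_1,X_2$ (parameters $b_{13},b_{23}$), and $b_{44}>0$. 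For each class we compute $\ric$ by the last formulas in \eqref{eq:lauret formulas}. Since $\mathfrak{s}$ is unimodular with abelian nilradical, $\ric|_\mathfrak{n}=\frac12[\ad_{X_4},\ad_{X_4}^T]$ and $\ric_4^4=-\Tr S(\ad_{X_4}|_\mathfrak{n})^2$. We also use the formula \eqref{H_squared} for $H^2$, with $H=X^4\wedge\omega+\eta$.

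Next we describe the symmetric derivations. Since $D(\mathfrak{s})\subset\mathfrak{n}$ and $\mathfrak{n}$ is abelian, the derivation condition reduces to $D|_\mathfrak{n}$ commuting with $\ad_{X_4}|_\mathfrak{n}$, together with $DX_4\in\mathfrak{n}$. Symmetry then forces $DX_4=0$. Because $\ad_{X_4}|_\mathfrak{n}$ has eigenvalues $\pm ib_{44}$ and $0$, $D|_\mathfrak{n}$ must act on the two-dimensional invariant plane by an element of the commutant of the rotation part. Symmetry kills the skew part, so $D$ acts as a scalar $m$ on that plane and preserves the kernel line with some eigenvalue $m'$. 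If $b_{13},b_{23}\neq 0$ the kernel line and the plane are not orthogonal, and we expect symmetry to force $m=m'$, i.e.\ $D=m\,\diag(1,1,1,0)$. By Theorem \ref{teoA} we may assume $\lambda<0$.

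The main step is to rule out each case for $H$. \emph{Case $\omega=0$, $H=\lambda_4X^{123}$.} The $(4,4)$ entry of the soliton equation gives $\lambda=\ric_4^4=-\Tr S(\ad_{X_4}|_\mathfrak{n})^2$. The remark on $\eta$ gives $2\lambda=-\Tr D|_\mathfrak{n}$. The trace over $\mathfrak{n}$ of the first equation gives $\Tr D|_\mathfrak{n}=-3\lambda-\frac32\lambda_4^2$, since $\Tr\ric|_\mathfrak{n}=0$. Combining these yields $\lambda=-\frac32\lambda_4^2$. For the pure rotation metric, where $\ad_{X_4}$ is skew, we have $\ric_4^4=0$, contradicting $\lambda<0$. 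In general we instead use Proposition \ref{teoB}: $\omega=0$ is $\mathfrak{a}$-invariant, so $\ad_{X_4}^T$ must be a derivation, and we check this fails unless $\ad_{X_4}$ is normal. In the normal case $\ad_{X_4}$ is skew and $\lambda=0$, a contradiction. \emph{Case $\omega\neq0$.} The mixed entries $\langle\ric^B X_i,X_4\rangle=-\frac14\langle H^2X_i,X_4\rangle$ must vanish, and by \eqref{H_squared} this forces $\lambda_4=0$. Then, exactly as in the previous propositions, we compute $\lambda$ from $\langle\ric^BX_4,X_4\rangle$ and $m$ from a diagonal entry on $\mathfrak{n}$. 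We aim to show that $2\lambda+2m\le0$, or more generally that $2\lambda-\rho(D)$ has nonpositive eigenvalues on the relevant components of $\omega$, as a sum of negative squares. Proposition \ref{propLaplaciano} then makes the positive semidefinite operator $\rho(\ad_{X_4})\rho(\ad_{X_4}^T)$ have a nonpositive eigenvalue on $\omega$. This forces equality, which kills the mixing parameters and the components $\lambda_1,\lambda_2$. The remaining diagonal entries of $\ric^B-\lambda\Id-D$ should then force $\lambda_3=0$.

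We expect the obstacle to lie in the case split on $D$: when $b_{13}=b_{23}=0$ we may have $m\neq m'$, so $\rho(D)$ acts on $X^{12}$, $X^{13}$ and $X^{23}$ with different weights. There we plan to treat each component of $\omega$ separately: the off-diagonal entries of $\ric^B$ should force all but one $\lambda_i$ to vanish. For the single surviving component we will compare the eigenvalue $2\lambda+(\text{weight})$ with $\|\rho(\ad_{X_4}^T)X^{jk}\|^2$ directly. We expect $b_{22}<1$, or skew-symmetry when $b_{22}=1$, to produce a sign contradiction, as in the proof for $\mathfrak{r}'_{4,-2\mu,\mu}$.
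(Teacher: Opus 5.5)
Your proposal is correct and follows essentially the same route as the paper. Both use the normal forms of \cite{VanThuong2017}, Proposition \ref{teoB} to exclude $H=\lambda_4X^{123}$, the mixed entries $\langle\ric^B X_i,X_4\rangle$ to force $\lambda_4=0$, the positivity argument from Proposition \ref{propLaplaciano} when $D|_{\mathfrak{n}}$ is scalar, and a separate check when $b_{13}=b_{23}=0$.

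Some of your steps are slightly more structural than the paper's explicit computations:
\begin{itemize}
\item You obtain the symmetric derivations from the commutant of $\ad_{X_4}|_{\mathfrak{n}}$. Here the condition forcing $D|_{\mathfrak{n}}$ to be scalar should read $(b_{13},b_{23})\neq(0,0)$, not both nonzero.
\item For $\omega=0$ you argue normal $\Rightarrow$ skew $\Rightarrow \lambda=\ric_4^4=0$. This handles both isometry classes at once.
\item For $b_{13}=b_{23}=0$, once only one component of $\omega$ survives, $2\lambda+D_{jj}+D_{kk}$ equals $-\lambda_3^2$ or $-\tfrac34\lambda_i^2$ for $i=1,2$. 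So the sign contradiction holds without invoking $b_{22}<1$.
\end{itemize}
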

\begin{proof}  $\mathfrak{g}=\mathfrak{e}(2) \oplus \mathbb{R}$ is the Lie algebra defined by
	\begin{equation*}
		\mu(e_4,e_1) =-e_2, \qquad
		\mu(e_4,e_2) =e_1.
	\end{equation*}
	There are two isometry classes for metrics on $\mathfrak{g}$. The first one admits an orthonormal basis $X_1,\dots,X_4$ for which
	\begin{equation*}
		\mu(X_1,X_4) =\frac{b_{44}}{b_{22}}X_2, \qquad
		\mu(X_2,X_4) =-b_{22}b_{44}X_1, \qquad
		\mu(X_3,X_4) =-b_{23}b_{44}X_1+\frac{b_{13}b_{44}}{b_{22}}X_2.
	\end{equation*}
	where $0<b_{22}<1, b_{44}>0, b_{13},b_{23}\geq0$. 
	
	A generic derivation is of the form
	\begin{equation*}D=
	\begin{bmatrix}
		m_{22} & -m_{21}b_{22}^2 & (m_{22} - m_{33})b_{13} - b_{22}b_{23}m_{21} & m_{14} \\
		m_{21} & m_{22} & \frac{b_{13}b_{22}m_{21} + b_{23}(m_{22} - m_{33})}{b_{22}} & m_{24} \\
		0 & 0 & m_{33} & m_{34} \\
		0 & 0 & 0 & 0
	\end{bmatrix}
	\end{equation*}
and we have symmetric derivations if 
$m_{21}=0$, $m_{14}=m_{24}=m_{34}=0$ and either $m_{22}=m_{33}$ or $b_{23}=b_{13}=0$.

The Ricci operator is represented by the following matrix, with respect to the basis $\{X_1,\dots,X_4\}$:
\begin{equation*}
	\frac{b_{44}^2}{2} \begin{bmatrix}
		\left(b_{22}^{4}+b_{23}^{2} b_{22}^{2}-1\right) b_{22}^{-2} & -b_{23} b_{13} b_{22}^{-1} & -b_{13} b_{22}^{-2} & 0 \\
		-b_{23} b_{13} b_{22}^{-1} & \left(-b_{22}^{4}+b_{13}^{2}+1\right) b_{22}^{-2} & -b_{23} b_{22} & 0 \\
		-b_{13} b_{22}^{-2} & -b_{23} b_{22} & -\left(b_{23}^{2} b_{22}^{2}+b_{13}^{2}\right) b_{22}^{-2} & 0 \\
		0 & 0 & 0 & \ast
	\end{bmatrix}
\end{equation*}
where the last entry is 
\begin{equation*}
	\left(-b_{22}^{4}+\left(-b_{23}^{2} +2 \right) b_{22}^{2}-b_{13}^{2} -1\right) b_{22}^{-2}.
\end{equation*}

The algebra decomposes as $\mathfrak{g}= \langle X_1,X_2,X_3\rangle \oplus X_4\mathbb{R}$ where the first summand is the abelian nilradical $\mathfrak{n}$.

 Let $H=\lambda_4 X^{123}+\lambda_3 X^{124}+\lambda_2 X^{134}+\lambda_1 X^{234}$ be a generic $3$-form on $\mathfrak{g}$. Then, the endomorphism $H^2$ has the same form as in \eqref{H_squared}. Also, using the splitting in Proposition \ref{propLaplaciano} we write $H=X^4\wedge \omega+\eta$, where $\omega=\lambda_3 X^{12}+\lambda_2 X^{13}+\lambda_1 X^{23}$ and $\eta=\lambda_4 X^{123}$.
 
  Now note that
\begin{equation*}
	\ad_{X_4}|_{\mathfrak{n}}=\begin{bmatrix} 
		0 & b_{22}b_{44} & b_{44}b_{23} \\
		-b_{44}b_{22}^{-1} & 0 & -b_{13}b_{44}b_{22}^{-1} \\
		0 & 0 & 0 
	\end{bmatrix}
\end{equation*}
In view of Proposition \ref{teoB} we cannot find solutions for $H=\lambda_4X^{123}\in \Lambda^3\mathfrak{n}^*$. Indeed, $\ad_{X_4}^T \in \Der(\mathfrak{g})$ can only happen if $-b_{44}b_{22}^{-1}=-b_{22}^{3}b_{44}$ and $b_{13}=b_{23}=0$, in particular if $b_{22}^4=1$, which is not allowed, since $b_{22}\in (0,1)$.

Therefore, we can assume that at least one of $\lambda_1,\lambda_2,\lambda_3$ is non-zero. Thus, the last row of $\ric-\frac{1}{4}H^2-\lambda \Id$ forces $\lambda_4=0$.

Suppose first that $m_{21}=0$ and $m_{22}=m_{33}$. Then, by computing $\langle \ric^B(X_4),X_4\rangle$, $\langle \ric^B(X_3),X_3\rangle$ and using the first equation in \eqref{algebraic soliton} we deduce that
\begin{align*}
	\lambda &= -\frac{b_{44}^2}{2b_{22}^2}\left(b_{22}^4 + (b_{23}^2 - 2)b_{22}^2 + b_{13}^2 + 1\right)-\frac{\lambda_1^2+\lambda_2^2+\lambda_3^2}{2}\\
	m_{33} &= \frac{b_{22}^{4} b_{44}^{2} + b_{22}^{2} \lambda_{3}^{2} - 2 b_{22}^{2} b_{44}^2 + b_{44}^2}{2 b_{22}^{2}}.
\end{align*}
In this case $\rho(D)\omega=-2m_{33}\omega$ and we notice that
\begin{equation*}
	2\lambda+2m_{33}=\frac{(-b_{23}^2 b_{44}^2 -\lambda_2^2 -\lambda_1^2)b_{22}^2 - b_{13}^2 b_{44}^2}{b_{22}^2}\leq 0.
\end{equation*} 
 Since $\rho(\ad_{X_4})\rho(\ad_{X_4}^T)$ is a positive semi-definite operator we deduce that this nonpositive eigenvalue must be zero, thus $\lambda_1=\lambda_2=b_{13}=b_{23}=0$. But now by computing $\langle \ric^B(X_1), X_1\rangle$, $\langle \ric^B(X_2), X_2\rangle$, using the first equation in \eqref{algebraic soliton} and summing the resulting equations we see that $\lambda_3=0$ and $b_{44}^2(b_{22}^4-1)=0$. The latter equation cannot be solved since $b_{44}>0$ and $b_{22} \in (0,1)$.

 	If instead, $b_{23}=b_{13}=0$, then $\lambda_4=0$ (otherwise we reduce to the previous case) and the equation $\Delta H=2\lambda H-\rho(D)H$ implies $\lambda_1=\lambda_2=\lambda_3=0$ meaning that $H=0$.

	The second isometry class admits a basis for which
	\begin{equation*}
		\mu(X_1,X_4) = b_{44}X_2,\qquad
		\mu(X_2,X_4) = -b_{44}X_1,\qquad
		\mu(X_3,X_4) = b_{13}b_{44}X_2.
	\end{equation*}
A generic derivation for these brackets is of the form:
	\begin{equation*}
		M = \begin{bmatrix}
			m_{22} & -m_{21} & (m_{22}-m_{33}) b_{13} & m_{14} \\[6pt]
			m_{21} & m_{22} & b_{13} m_{21} & m_{24} \\[6pt]
			0 & 0 & m_{33} & m_{34} \\[6pt]
			0 & 0 & 0 & 0 
		\end{bmatrix}.
	\end{equation*}
This is symmetric if and only if $m_{14}=m_{24}=m_{34}=0$, $m_{21}=0$ and $m_{22}=m_{33}$ ($b_{13}=0$ leads to the same analysis as before with $b_{44}=b_{22}^{-2}$).

The algebra decomposes as $\mathfrak{g}= \langle X_1,X_2,X_3\rangle \oplus X_4\mathbb{R}$ where the first summand is the abelian nilradical $\mathfrak{n}$. Now note that
\begin{equation*}
	\ad_{X_4}|_{\mathfrak{n}}=\begin{bmatrix} 
		0 & b_{44} & 0 \\
		-b_{44} & 0 & -b_{13}b_{44} \\
		0 & 0 & 0 
	\end{bmatrix}
\end{equation*}
In view of Theorem \ref{teoB}, no solitons with $H\in \Lambda^3\mathfrak{n}^*$ exist, since $\ad_{X_4}^T \in \Der(\mathfrak{g})$ can only happen if $b_{13}=0$ and this reduces to the same analysis we did for the previous isometry class. In particular, since the Ricci operator is represented by the matrix
\begin{equation*}
	\begin{bmatrix}
		0 & 0 & -\frac{1}{2}b_{13}b_{44}^2 & 0 \\
		0 & \frac{1}{2}b_{13}^2 b_{44}^2 & 0 & 0 \\
		-\frac{1}{2}b_{13}b_{44}^2 & 0 & -\frac{1}{2}b_{13}^2 b_{44}^2 & 0 \\
		0 & 0 & 0 & -\frac{1}{2}b_{13}^2 b_{44}^2
	\end{bmatrix},
\end{equation*} 
like before we can prove $\lambda_4=0$

Reasoning as in the previous isometry class, we get
\begin{align*}
	\lambda &= -\frac{b_{13}^{2} b_{44}^{2}}{2} - \frac{\lambda_{3}^{2}}{2} - \frac{\lambda_{2}^{2}}{2} - \frac{\lambda_{1}^{2}}{2} \\
	m_{33} &= \frac{\lambda_{3}^{2}}{2}
\end{align*}
and since
\begin{equation*}
	2\lambda+2m_{33}=-b_{13}^2 b_{44}^2 - \lambda_2^2 - \lambda_1^2 \leq 0
\end{equation*}  and $\rho(M)\omega=-2m_{33}\omega$, we conclude $\lambda_1=\lambda_2=b_{13}=0$ by Proposition \ref{propLaplaciano}. Then, computing $\langle\ric^B(X_1),X_1\rangle$ yields $\lambda_3=0$. As a consequence, $H=0$.
\end{proof}

\subsection{Algebras with non-abelian nilradical}
Up to isomorphism, there are two solvable, non-nilpotent, unimodular Lie algebras of dimension 4 with non-abelian, $3$-dimensional nilradical $\mathfrak{h}_3$ (that is the unique nilpotent non-abelian $3$-dimensional Lie algebra) spanned by three basis elements $\{e_1,e_2,e_3\}$ such that $\mu(e_2,e_3)=e_1$. The action of a fourth basis element $e_4$ is given in table \ref{table2}.
\setlength{\tabcolsep}{20pt}
\begin{table}[h]
	\centering
	\caption{Algebras with non-abelian nilradical}
	\begin{tabular}{lcl}
		\toprule
		$\mathfrak{s}$ & \qquad & $\ad_{e_4}|_\mathfrak{n}$ \\
		\midrule
		$\mathfrak{d}_{4}$ & \qquad& $\begin{bmatrix} 0 & 0 & 0 \\ 0 & 1 & 0 \\ 0 & 0 & -1 \end{bmatrix}$ \\ \addlinespace[10pt]
		$\mathfrak{d}'_{4,0}$ & \qquad& $\begin{bmatrix} 0 & 0 & 0 \\ 0 & 0 & 1 \\ 0 & -1 & 0\end{bmatrix}$ \\ 
		\bottomrule
	\end{tabular}
	\label{table2}
\end{table}

	\begin{proposition}
	On the Lie algebra $\mathfrak{d}_{4}$ the unique generalized Ricci soliton with nonzero $H$, up to simultaneous positive scaling of the pair $(\langle \cdot,\cdot\rangle, H)$ and replacement of $H$ with $-H$, is $(\mathfrak{d}_{4},\langle\cdot,\cdot\rangle, H, D, \lambda )$, where:
	\begin{itemize}
		\item  the bracket relations are, with respect to a $\langle \cdot,\cdot \rangle$-orthonormal basis $X_1, \dots, X_4$ and dual basis $X^1, \dots, X^4$:
		\begin{equation*}
			\mu(X_2, X_3)= X_1, \qquad \mu(X_2, X_4)=-X_2, \qquad \mu(X_3, X_4)=X_3;
		\end{equation*} 
		\item $H= X^{123}$;
		\item 
		$D = \diag(2,1,1,0)$.
		\item $\lambda=-2$;
	\end{itemize}
	\end{proposition}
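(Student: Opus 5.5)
The plan follows the same scheme as the abelian-nilradical cases: reduce to a normal form for the metric, write down the Bismut--Ricci operator and the symmetric derivations, and solve the two equations in \eqref{algebraic soliton} case by case. Here Propositions \ref{teoB} and \ref{propLaplaciano} are not available, because the nilradical $\mathfrak{h}_3$ is not abelian. So I will compute $\Delta H$ directly.

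The first step is to use the classification of left-invariant metrics on $\mathfrak{d}_4$ up to isometry and scaling, as in \cite{VanThuong2017}. This gives an orthonormal basis $X_1,\dots,X_4$ with
\begin{equation*}
\mu(X_2,X_3)=b_{11}X_1,\qquad \mu(X_2,X_4)=-b_{44}X_2+\ast,\qquad \mu(X_3,X_4)=b_{44}X_3+\ast,
\end{equation*}
where the $\ast$ are the few off-diagonal parameters allowed by the normal form; after scaling I may take $b_{11}=1$.

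Next I describe the derivations. Any derivation maps into $\mathfrak{n}=\langle X_1,X_2,X_3\rangle$, so $D(X_4)=0$ by symmetry. Moreover $D|_{\mathfrak{n}}$ must preserve $[\mathfrak{n},\mathfrak{n}]=\mathbb{R}X_1$ and commute with $\ad_{X_4}$ up to inner terms. The symmetric ones should then be essentially diagonal, of the form $\diag(m_{22}+m_{33},m_{22},m_{33},0)$, possibly with further restrictions when the off-diagonal bracket parameters are nonzero.

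For a general $H=\lambda_4X^{123}+\lambda_3X^{124}+\lambda_2X^{134}+\lambda_1X^{234}$, I first impose $dH=0$. Since $d X^1=-X^{23}$ up to sign, $dX^{124}$ and $dX^{134}$ contribute terms in $X^{1234}$. This should kill a combination of $\lambda_2,\lambda_3$, and with the diagonal brackets I expect only $\lambda_1$ to be constrained mildly. I then compute $\ric$ from \eqref{Gen ricci curvature}, and $H^2$ from \eqref{H_squared}.

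The $\langle\ric^B X_i,X_4\rangle$ and off-diagonal entries, compared with the diagonal $\lambda\Id+D$, should force the off-diagonal metric parameters to vanish. I expect them also to force $\lambda_1=\lambda_2=\lambda_3=0$, or at least to leave a system incompatible with the $H$-equation; in that case one checks that the $\langle\ric^B X_4,X_4\rangle$ equation together with $\Delta H=2\lambda H-\rho(D)H$ gives $H=0$. Here Theorem \ref{teoA} guarantees $\lambda<0$, which helps exclude sign cases.

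In the remaining case $H=\lambda_4X^{123}$ with diagonal brackets, the computation should be explicit. With $\mu(X_2,X_3)=X_1$ and $\mu(X_2,X_4)=-bX_2$, $\mu(X_3,X_4)=bX_3$, one gets
\begin{equation*}
\ric=\diag\left(\tfrac12,-\tfrac12,-\tfrac12,-2b^2\right),\qquad H^2=2\lambda_4^2\diag(1,1,1,0).
\end{equation*}
The $X_4$ entry gives $\lambda=-2b^2$. The conditions $D_{11}=D_{22}+D_{33}$ and $D_{22}=D_{33}$ then give $\tfrac12-\tfrac{\lambda_4^2}{2}+2b^2=2\left(-\tfrac12-\tfrac{\lambda_4^2}{2}+2b^2\right)$, that is, $\lambda_4^2=4b^2-3$.

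Finally I compute $\Delta H$. The form $X^{123}$ is closed, and $\star X^{123}=\pm X^4$ with $dX^4=0$, so $H$ is harmonic. The $H$-equation becomes $0=2\lambda+\Tr D|_{\mathfrak{n}}$, with $\Tr D|_{\mathfrak{n}}=4D_{22}$, which yields a second relation between $b$ and $\lambda_4$. Solving both relations should give $b=1$ and $\lambda_4=\pm1$, hence $D=\diag(2,1,1,0)$ and $\lambda=-2$.

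The main obstacle I anticipate is the bookkeeping in the general-metric case, namely showing that the off-diagonal metric parameters and the components of $\omega$ must vanish. For that I will first try the trace identity obtained by pairing \eqref{momentmapRic} with $E=D$, mimicking the Cauchy--Schwarz argument of Proposition \ref{teoB}, and fall back on direct entrywise computation if this fails.
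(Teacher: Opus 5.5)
Your computation for the diagonal metric with $H=\lambda_4X^{123}$ is correct. It is the same system the paper solves: there, the $H$-equation gives $\lambda_4^2b_{11}^2=1$ and the $(3,3)$-entry gives $b_{11}^2b_{44}^2=1$, which in your normalization is $b=1$, $\lambda_4=\pm1$. So existence is fine.

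The gap is uniqueness, which is the real content of the statement. For $\omega\neq0$, and for nonzero off-diagonal parameters $b_{12},b_{13},b_{23}$, you only say what the equations ``should'' do, and some of the mechanisms you anticipate are wrong.
\begin{itemize}
\item Closedness imposes nothing. Every $3$-form on a $4$-dimensional Lie algebra is $\iota_X\vol_{\mathfrak s}$, and in the unimodular case $d(\iota_X\vol_{\mathfrak s})=\rho(\ad_X)\vol_{\mathfrak s}=-\Tr(\ad_X)\vol_{\mathfrak s}=0$. So all four $\lambda_i$ remain free.
\item The off-diagonal entries of $\ric$ do not by themselves kill the metric parameters, because they can be balanced by off-diagonal entries of $\frac14H^2$. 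For instance, for $b_{12}=b_{13}=0\neq b_{23}$ the $(2,3)$-entry forces $\lambda_2\lambda_3=-4b_{44}^2b_{23}\neq0$. A nonzero $\omega$ is then forced and has to be excluded through the $H$-equation.
\item Your fallback via Proposition \ref{teoB} cannot work. Lemma \ref{Lemma} uses that $\mathfrak n$ is abelian. For $\mathfrak h_3$, with $E_1$ the projection onto $\mathfrak n$, the identity \eqref{cond2} becomes $3\lambda+\Tr F_{\mathfrak n}=\frac14\langle\theta(E_1)\mu,\mu\rangle=-\frac14\|\mu_{\mathfrak n}\|^2$. Indeed the would-be conclusion $F_{\mathfrak n}=-\lambda\Id_{\mathfrak n}$ is false for the soliton of the statement, where $F_{\mathfrak n}=\diag(\frac52,\frac32,\frac32)$ and $-\lambda=2$.
\end{itemize}

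What closes the gap is an entrywise analysis combined with the positivity $\langle\Delta H,H\rangle=\|d^*H\|^2\geq0$. The paper does this explicitly only for the diagonal metric and calls the other cases similar. Here is one way to run it.
\begin{itemize}
\item Reduction to $b_{12}=b_{13}=0$. In the normal form every symmetric derivation is diagonal, so $\ric_{ij}=\frac14H^2_{ij}$ for $i\neq j$. By \eqref{H_squared}, $H^2_{12}H^2_{34}=H^2_{13}H^2_{24}=H^2_{14}H^2_{23}$, and $\ric_{14}=0$. Hence $\ric_{13}\ric_{24}=0=\ric_{12}\ric_{34}$, which forces $b_{13}=2b_{12}b_{23}$ and then $b_{12}=b_{13}=0$. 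This also disposes of the case $b_{12}=b_{23}=0\neq b_{13}$ with $D=\diag(m,0,m,0)$, which is a symmetric derivation missing from the paper's list.
\item Case $b_{23}\neq0$. The off-diagonal entries give $\lambda_1=\lambda_4=0$ and $D=m\diag(2,1,1,0)$. The $H$-equation then reads $\Delta H=(2\lambda+3m)H$, with
$2\lambda+3m=\ric^B_{11}+\frac12(\ric^B_{22}+\ric^B_{33})=-\frac34(\lambda_2^2+\lambda_3^2)<0$, contradicting positivity.
\item Diagonal metric. At most one $\lambda_i$ is nonzero. For $H=\lambda_kX^{ij4}$ the $H$-equation reads $\Delta H=(\ric^B_{ii}+\ric^B_{jj})H$. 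The three possibilities give $\ric^B_{11}+\ric^B_{22}=-\lambda_3^2$, $\ric^B_{11}+\ric^B_{33}=-\lambda_2^2$ and $\ric^B_{22}+\ric^B_{33}=-b_{11}^{-2}-\lambda_1^2$, all negative. Only your case $H=\lambda_4X^{123}$ survives.
\end{itemize}
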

	\begin{proof}
		This is the Lie algebra defined by
		\begin{equation*}
			\mu(e_2,e_3)=e_1, \qquad
			\mu(e_4,e_2)=e_2, \quad
			\mu(e_4,e_3)=-e_3.
		\end{equation*}
		Thanks to \cite{VanThuong2017}, every metric on $\mathfrak{g}:=\mathfrak{d}_4$ is isometric to one defined by an orthonormal basis $X_1,\dots,X_4$, with dual basis $X^1,\dots, X^4$ and brackets
		\begin{align*}
			\mu(X_2,X_3) &= \frac{1}{b_{11}}X_1,\\
			\mu(X_2,X_4) &= -b_{44}X_2+\frac{b_{44}b_{12}}{b_{11}}X_1,\\
			\mu(X_3,X_4) &= \frac{b_{44}}{b_{11}}\left(2b_{12}b_{23}-b_{13}\right)X_1-2b_{23}b_{44}X_2+b_{44}X_3.
		\end{align*}
		for constants $b_{11},b_{44}>0, b_{12},b_{23}\geq 0, b_{13} \in \mathbb{R}$. For these brackets, the Ricci operator is given by:
	\begin{align*}
		\ric_1^1 &= \frac{1 + b_{44}^2 b_{12}^2 + b_{44}^2 \left(2 b_{12} b_{23} - b_{13}\right)^2}{2 b_{11}^2}, \\
		\ric_1^2 &= -\frac{b_{44}^2 \left(4 b_{12} b_{23}^2 - 2 b_{13} b_{23} + b_{12}\right)}{2 b_{11}}, \\
		\ric_1^3  &= \frac{b_{44}^2 \left(2 b_{12} b_{23} - b_{13}\right)}{2 b_{11}}, \\
		\ric_1^4  &= 0, \\
		\ric_2^2 &= \frac{-1 - b_{44}^2 b_{12}^2 + 4 b_{11}^2 b_{23}^2 b_{44}^2}{2 b_{11}^2}, \\
		\ric_2^3  &= \frac{-b_{12} b_{44}^2 \left(2 b_{12} b_{23} - b_{13}\right) - 4 b_{11}^2 b_{44}^2 b_{23}}{2 b_{11}^2}, \\
		\ric_2^4 &= \frac{b_{44} \left(2 b_{12} b_{23} - b_{13}\right)}{2 b_{11}^2}, \\
		\ric_3^3 &= \frac{-1 - b_{44}^2 \left(2 b_{12} b_{23} - b_{13}\right)^2 - 4 b_{11}^2 b_{23}^2 b_{44}^2}{2 b_{11}^2}, \\
		\ric_3^4  &= -\frac{b_{44} b_{12}}{2 b_{11}^2}, \\
		\ric_4^4 &= \frac{-b_{44}^2 b_{12}^2 - b_{44}^2 \left(2 b_{12} b_{23} - b_{13}\right)^2 - 4 b_{11}^2 b_{44}^2 - 4 b_{11}^2 b_{23}^2 b_{44}^2}{2 b_{11}^2}.
	\end{align*}
		A generic derivation of the Lie algebra is of the form:
		\begin{equation*}
			D= \begin{bmatrix}
				m_{11} & m_{12} & m_{13} & m_{14} \\[6pt]
				0 & m_{22} & -(m_{11}-2 m_{22}) b_{23} & b_{44} [b_{11} (2 b_{23} m_{12}-m_{13})-b_{12} b_{23} m_{11}+b_{13} m_{22}] \\[6pt]
				0 & 0 & -m_{22}+m_{11} & -b_{44} [(-m_{11}+m_{22}) b_{12}+b_{11} m_{12}] \\[6pt]
				0 & 0 & 0 & 0 
			\end{bmatrix}
		\end{equation*}
	and we get a non-trivial symmetric derivation if $m_{12}=m_{13}=m_{14}=0$ and one of the following holds:
	\begin{itemize}
		\item $b_{23}=b_{13}=b_{12}=0$;
		\item $b_{12}=b_{13}=0,$ $m_{11}=2m_{22}$;
		\item $m_{11}=m_{22}$, $b_{23}=b_{13}=0$.
	\end{itemize} 
	
		 Let $H=\lambda_4 X^{123}+\lambda_3 X^{124}+\lambda_2 X^{134}+\lambda_1 X^{234}$ be a generic $3$-form on $\mathfrak{g}$. Then, the endomorphism $H^2$ has the same form as in \eqref{H_squared}. Also, using the splitting in Proposition \ref{propLaplaciano} we write $H=X^4\wedge \omega+\eta$, where $\omega=\lambda_3 X^{12}+\lambda_2 X^{13}+\lambda_1 X^{23}$ and $\eta=\lambda_4 X^{123}$. Then,  $\ric-\lambda\Id-\frac{1}{4}H^2\in \Der(\mathfrak{g})$ only if the constant $\lambda$ is:
		\begin{equation*}
			\begin{split}
				\lambda = -\frac{1}{2 b_{11}^{2}} \Big[ & 4 b_{44}^{2} \left( b_{11}^{2} b_{23}^{2} + b_{12}^{2} b_{23}^{2} - b_{12} b_{13} b_{23} + b_{11}^{2} \right)+ \\
				& + b_{11}^{2} \left( \lambda_3^{2} + \lambda_2^{2} + \lambda_1^{2} \right) + b_{44}^{2} \left( b_{12}^{2} + b_{13}^{2} \right) \Big].
			\end{split}
		\end{equation*}
		
		Let us consider the case $b_{23}=b_{13}=b_{12}=0$. Assume that $\lambda_1=\lambda_2=\lambda_3=0$. Then, computing $\langle \ric^B(X_1),X_1\rangle$, $\langle \ric^B(X_2),X_2\rangle$ and using the first equation in \eqref{algebraic soliton} yields:
		\begin{align*}
			\lambda &= -2 b_{44}^2, \\
			m_{11} &= \frac{1 - \lambda_4^2 b_{11}^2 + 4 b_{11}^2 b_{44}^2}{2 b_{11}^2}, \\
			m_{22} &= \frac{-1 - \lambda_4^2 b_{11}^2 + 4 b_{11}^2 b_{44}^2}{2 b_{11}^2}.
		\end{align*}
		At this point, the GRS equation $\Delta H=2\lambda H-\rho(D)H$ implies that $\lambda_4^2=\frac{1}{b_{11}^2}$ and from the computation of $\langle \ric^B(X_3),X_3\rangle$ we get $b_{44}^2b_{11}^2=1$. This provides a nontrivial soliton as in the statement.
		
		Now we assume that at least one among $\lambda_1,\lambda_2,\lambda_3$ is nonzero. Then, $\lambda_4=0$ and again, if we compute $\langle \ric^B(X_1),X_1\rangle$, $\langle \ric^B(X_2),X_2\rangle$ and use the first equation in \eqref{algebraic soliton} we obtain:
		\begin{align*}
			\lambda &= -2 b_{44}^2 - \frac{1}{2}\left(\lambda_1^2 + \lambda_2^2 + \lambda_3^2\right), \\[6pt]
			m_{11} &= \frac{1 + 4 b_{11}^2 b_{44}^2 + b_{11}^2 \lambda_1^2}{2 b_{11}^2}, \\[6pt]
			m_{22} &= \frac{-1 + 4 b_{11}^2 b_{44}^2 + b_{11}^2 \lambda_2^2}{2 b_{11}^2}.
		\end{align*}
	At this point, from $\Delta H=2\lambda H-\rho(D)H$, we have the following equations:
	\begin{equation*}
		\begin{cases}
			\lambda_1 \left(4b_{11}^2 b_{44}^2 + 2b_{11}^2 \lambda_3^2 + 2b_{11}^2 \lambda_2^2 + b_{11}^2 \lambda_1^2 + 1\right)=0 \\
			2\lambda_3^3 + \left(2b_{44}^2 + \lambda_2^2 + \lambda_1^2\right)\lambda_3=0\\
			\lambda_2 \left(6b_{11}^2 b_{44}^2 + 2b_{11}^2 \lambda_3^2 + 3b_{11}^2 \lambda_2^2 - 3\right)=0.
		\end{cases}
	\end{equation*}
	As a consequence, $\lambda_1=\lambda_3=0$ and $2b_{44}^2b_{11}^2=1-\lambda_2^2b_{11}^2$.
	 But this is not compatible with the fact that $\ric-\frac{1}{4}H^2-\lambda \Id \in \Der(\mathfrak{g})$. Indeed, $\langle \ric^B(X_3),X_3\rangle$ forces $2b_{11}^2b_{44}^2 =- b_{11}^2\lambda_2^2 + 3$. As a result, there are no GRS in the case $b_{23}=b_{13}=b_{12}=0$.
		
	The remaining cases that yield symmetric derivations are treated similarly, noticing that we may reduce to the case we just analyzed.	
	\end{proof}

	\begin{proposition}
		\label{class2}
		On the Lie algebra $\mathfrak{d}'_{4,0}$ there do not exist any algebraic generalized Ricci solitons with $H\neq 0$.
	\end{proposition}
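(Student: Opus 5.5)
We follow the scheme used for $\mathfrak{d}_4$. The algebra $\mathfrak{d}'_{4,0}$ has brackets $\mu(e_2,e_3)=e_1$, $\mu(e_4,e_2)=-e_3$, $\mu(e_4,e_3)=e_2$. The first step is to quote \cite{VanThuong2017} for a normal form of every inner product up to isometry. We expect an orthonormal basis $X_1,\dots,X_4$ in which $\mu(X_2,X_3)=b_{11}^{-1}X_1$, while $\ad_{X_4}$ acts on $\mathfrak{n}=\langle X_1,X_2,X_3\rangle$ by a matrix with a rotation-type block in the $X_2,X_3$ entries. This block is governed by parameters such as $b_{22}\in(0,1]$ and $b_{44}>0$, and there are possible off-diagonal entries into $X_1$ (parameters $b_{12},b_{13}$).

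For each isometry class we then carry out three computations:
\begin{enumerate}
\item compute the Ricci operator via \eqref{Gen ricci curvature};
\item compute $H^2$ for the generic form $H=\lambda_4X^{123}+\lambda_3X^{124}+\lambda_2X^{134}+\lambda_1X^{234}$, which has the shape \eqref{H_squared};
\item compute the space of derivations, and isolate the symmetric ones.
\end{enumerate}
We expect the symmetric derivations to be very restricted. Any derivation preserves $\mathfrak{n}$, has image in $\mathfrak{n}$, and commutes with the rotation part of $\ad_{X_4}$ modulo inner derivations. Hence a symmetric $D$ should take the form $\diag(2m,m,m,0)$, possibly only when the off-diagonal parameters vanish.

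Next we use the block structure of the first soliton equation $\ric^B=\lambda\Id+D$. The last row and column of $D$ vanish, and $D(\mathfrak{s})\subset\mathfrak{n}$. The entries $\langle\ric^B X_i,X_4\rangle$ then involve products $\lambda_4\lambda_i$ and the $b$-parameters, and should force either $\lambda_4=0$ or $\lambda_1=\lambda_2=\lambda_3=0$.

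The entry $\langle\ric^B X_4,X_4\rangle$ determines $\lambda$. Together with the diagonal entries on $\mathfrak{n}$, it should yield the relations $m_{11}=2m_{22}$ and $m_{22}=m_{33}$. The decisive feature is that, unlike $\mathfrak{d}_4$, the action of $X_4$ is "elliptic". In the model metric where $\ad_{X_4}|_\mathfrak{n}$ is skew, the $X_2,X_3$ diagonal Ricci entries equal $-\frac{1}{2b_{11}^2}$ minus $H$-terms. These entries must equal $\lambda+m$. Comparing them with the $X_1$ entry $\frac{1}{2b_{11}^2}$ and with $\lambda$ from the $X_4$ entry should give incompatible constraints. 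In particular we expect $\lambda$ to come out $\ge 0$ or the soliton to collapse to $H=0$, in line with Theorem \ref{teoA}.

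Finally we impose $\Delta H=2\lambda H-\rho(D)H$, computing $d$ and $d^*$ directly, since Proposition \ref{propLaplaciano} requires abelian nilradical. With $D=\diag(2m,m,m,0)$ we have $\rho(D)X^{123}=-4mX^{123}$ and $\rho(D)X^{1j4}=-3mX^{1j4}$. This gives scalar polynomial equations in the $\lambda_i$, as in the $\mathfrak{d}_4$ case. For the $\lambda_4$ component, $\Delta X^{123}$ arises only from $d^*X^{123}$, since $X^{123}$ is closed but not coclosed when $\mathfrak{n}$ is non-abelian. The resulting equation should read $\lambda_4(c+2\lambda+4m)=0$ with $c>0$, and the Ricci relations already determine $2\lambda+4m$. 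We then check that this is inconsistent unless $\lambda_4=0$. For the $\lambda_1,\lambda_2,\lambda_3$ components the equations should factor as $\lambda_i\cdot(\text{positive})=0$.

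The main obstacle is the non-generic isometry classes with off-diagonal parameters. There the derivation algebra shrinks, but the Ricci matrix is messy. We would first try to show that symmetry of $D$ forces the off-diagonal parameters to vanish, reducing to the model case, and otherwise handle them by a sign/sum-of-squares argument on $2\lambda+2m$ analogous to the one used in the abelian-nilradical propositions.
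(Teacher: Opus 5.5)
Your architecture matches the paper's: normal forms of metrics, symmetric derivations $D=\diag(2m,m,m,0)$, the dichotomy $\lambda_4\lambda_i=0$ from the $X_4$-column, then diagonal comparisons and the Laplacian. However, two predictions you rely on are wrong, and they matter.

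First, $X^{123}$ is harmonic, not merely closed. Indeed $\star X^{123}=\pm X^4$, and $dX^4=0$ because $[\mathfrak{s},\mathfrak{s}]\subset\mathfrak{n}\perp X_4$. By self-adjointness of $\Delta$, no other component of $H$ contributes to the $X^{123}$-component of $\Delta H$. So that component of the second equation is $\lambda_4(2\lambda+4m)=0$, i.e.\ your constant is $c=0$. Combined with the $X_1$-entry of the first equation, it only says $\lambda_4^2=\|\mu(X_2,X_3)\|^2$. This is consistent (it is exactly how the $\mathfrak{d}_4$ soliton arises), so the case $\lambda_4\neq0$ cannot be excluded by the Laplacian and must be killed by the first equation alone. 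In the skew class this works as you indicate; there the $X_4$-entry even gives $\lambda=0$, contradicting Theorem~\ref{teoA} at once.

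Second, symmetry of a nonzero $D$ kills $b_{12},b_{13}$ but not the squashing parameter. In the class with $0<b_{33}<1$ (paper's notation), $\ad_{X_4}$ sends $X_2\mapsto-\frac{b_{44}}{b_{33}}X_3$ and $X_3\mapsto b_{33}b_{44}X_2$, which is not skew. So this family never reduces to your model case, and your ``decisive feature'' paragraph does not cover it. Here $\ric_2^2-\ric_3^3=b_{44}^2(b_{33}^4-1)/b_{33}^2\neq0$, which disposes of $\lambda_4\neq0$.

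For $\lambda_4=0$, however, the off-diagonal entries and the $X_2$/$X_3$ comparison force $H=\lambda_2X^{134}$ with $\lambda_2^2=2b_{44}^2(1-b_{33}^4)/b_{33}^2$. The remaining diagonal equations can then be met by a suitable choice of $b_{11}$. So the first soliton equation genuinely has solutions with $H\neq0$, and only the second equation rules them out. The $2\lambda+2m$ sign argument from the abelian case is unavailable: Proposition~\ref{propLaplaciano} uses $d_{\mathfrak{n}}=0$, and $\rho(D)$ is not scalar on $\omega$. What you need is the direct computation $\Delta X^{134}=b_{33}^2b_{44}^2X^{134}$ and $\rho(D)X^{134}=-3mX^{134}$, giving $b_{33}^2b_{44}^2=2\lambda+3m$. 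After inserting the values of $\lambda$, $m$, $\lambda_2^2$ obtained from the diagonal entries, this becomes $2b_{11}^2b_{44}^2+3b_{33}^2=0$, impossible. This is the decisive step of the paper's proof in that class, and your proposal needs it explicitly.

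Finally, $D=0$ is symmetric for all parameter values, so ``symmetry forces the off-diagonal parameters to vanish'' does not cover it. That case is disposed of by Theorem~\ref{teoA} together with $\langle\Delta H,H\rangle\ge0$.
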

	\begin{proof}
		This is the Lie algebra with brackets defined by
		\begin{equation*}
			\mu(e_2,e_3) = e_1, \qquad
			\mu(e_4,e_2) = -e_3, \qquad
			\mu(e_4,e_3) = e_2.
		\end{equation*}
		Thanks to \cite{VanThuong2017} every metric on $\mathfrak{g}=\mathfrak{d}'_{4,0}$ is isometric to one of the following two isometry classes, both defined for an orthonormal basis $X_1,\dots X_4$ with dual basis $X^1,\dots,X^4$. The first one is obtained with brackets relations
		\begin{equation*}
			\mu(X_2,X_3) = \frac{b_{33}}{b_{11}}X_1,\qquad
			\mu(X_2,X_4) = -\frac{b_{13}b_{44}}{b_{11}b_{33}}X_1+\frac{b_{44}}{b_{33}}X_3,\qquad
			\mu(X_3,X_4) = \frac{b_{12}b_{33}b_{44}}{b_{11}}X_1-b_{33}b_{44}X_2.
		\end{equation*}
		and $b_{11},b_{44}>0, b_{12},b_{13}\geq0, 0<b_{33}<1$.
		
		The second one is obtained with brackets
		\begin{equation*}
			\mu(X_2,X_3) = \frac{1}{b_{11}}X_1, \qquad
			\mu(X_2,X_4) = b_{44}X_3, \qquad
			\mu(X_3,X_4) = \frac{b_{12}b_{44}}{b_{11}}X_1-b_{44}X_2.
		\end{equation*}
		and $b_{11},b_{44}>0,b_{12}\geq0$.
		
		Let us consider the first isometry class. For those brackets, the Ricci operator is represented by the following matrix:
		\begin{equation*}
			\ric=\frac{1}{2 b_{11}}
			\resizebox{\textwidth}{!}{$
			\begin{bmatrix}
				\frac{b_{12}^{2} b_{33}^{4} b_{44}^{2}+b_{13}^{2} b_{44}^{2}+b_{33}^{4}}{b_{11} b_{33}^{2}} & -b_{44}^{2} b_{12} b_{33}^{2} & -\frac{b_{13} b_{44}^{2}}{b_{33}^{2}} & 0 \\
				-b_{44}^{2} b_{12} b_{33}^{2} & \frac{b_{44}^{2} \left(b_{33}^{4}-1\right) b_{11}^{2}-b_{13}^{2} b_{44}^{2}-b_{33}^{4}}{b_{11} b_{33}^{2}} & \frac{b_{13} b_{44}^{2} b_{12}}{b_{11}} & \frac{b_{33}^{2} b_{44} b_{12}}{b_{11}} \\
				-\frac{b_{13} b_{44}^{2}}{b_{33}^{2}} & \frac{b_{13} b_{44}^{2} b_{12}}{b_{11}} & \frac{\left(-b_{44}^{2} b_{11}^{2}-b_{12}^{2} b_{44}^{2}-1\right) b_{33}^{4}+b_{44}^{2} b_{11}^{2}}{b_{11} b_{33}^{2}} & \frac{b_{13} b_{44}}{b_{11}} \\
				0 & \frac{b_{33}^{2} b_{44} b_{12}}{b_{11}} & \frac{b_{13} b_{44}}{b_{11}} & -\frac{\left(\left(b_{33}^{4}-2 b_{33}^{2}+1\right) b_{11}^{2}+b_{12}^{2} b_{33}^{4}+b_{13}^{2}\right) b_{44}^{2}}{b_{11} b_{33}^{2}}
			\end{bmatrix}$}.
		\end{equation*}
		
	A generic derivation is
		\begin{equation*}
			D=\begin{bmatrix}
				2 m_{33} & m_{12} & m_{13} & m_{14} \\
				0 & m_{33} & -m_{32} b_{33}^2 & (b_{11}m_{12} - b_{12}m_{33} + b_{13}m_{32})b_{44}\\
				0 & m_{32} & m_{33} & \frac{b_{44}(-b_{12}b_{33}^2m_{32} + b_{11}m_{13} - b_{13}m_{33})}{b_{33}^2} \\
				0 & 0 & 0 & 0
			\end{bmatrix}
		\end{equation*}
		so that $D$ is a nontrivial symmetric derivation if and only if $m_{12}=m_{13}=m_{14}=m_{32}=b_{12}=b_{13}=0$.
		
		Let $H=\lambda_4 X^{123}+\lambda_3 X^{124}+\lambda_2 X^{134}+\lambda_1 X^{234}$ be a generic $3$-form on $\mathfrak{g}$. Then, the endomorphism $H^2$ has the same form as in \eqref{H_squared}. Also, using the splitting in Proposition \ref{propLaplaciano} we write $H=X^4\wedge \omega+\eta$, where $\omega=\lambda_3 X^{12}+\lambda_2 X^{13}+\lambda_1 X^{23}$ and $\eta=\lambda_4 X^{123}$.
		
		As usual, assume $\lambda_1=\lambda_2=\lambda_3=0$. Then, $\ric^B-\lambda\Id \in \Der(\mathfrak{g})$ only if
		\begin{align*}
			\lambda &= -\frac{\left(b_{33}-1\right)^2\left(b_{33} +1\right)^2b_{44}^2}{2 b_{33}^{2}}, \\
			m_{33} &= \frac{b_{11}^{2} b_{33}^{4} b_{44}^{2} - 2 b_{11}^{2} b_{33}^{2} b_{44}^{2} - b_{11}^{2} b_{33}^{2} \lambda_{4}^{2} + b_{11}^{2} b_{44}^{2} + b_{33}^{4}}{4 b_{11}^{2} b_{33}^{2}}.
		\end{align*}
		The second equation in \eqref{algebraic soliton} simplifies to
		\begin{equation*}
		b_{11}^2 \lambda_4^3 - b_{33}^2\lambda_4=0
		\end{equation*} 
		meaning that $b_{33}=\pm b_{11}\lambda_4$.
		At this point, the computations of $\langle\ric^B(X_2),X_2\rangle$ and $\langle\ric^B(X_3),X_3\rangle$ yield
	\begin{align*}
		-b_{44}^2 + 3b_{11}^4 b_{44}^2 \lambda_4^4 + \left(-4\lambda_4^4 - 2b_{44}\lambda_4^2\right) b_{11}^2& =0\\
		 3b_{44}^2 - b_{11}^4 b_{44}^2 \lambda_4^4 + \left(-4\lambda_4^4 - 2b_{44}\lambda_4^2\right) b_{11}^2& = 0.
	\end{align*}
		We cannot find a common solution to both these equations, with both $b_{44},b_{11}>0$. This can be seen, for instance, by subtracting the two equations above.
		
		The case where $\lambda_1,\lambda_2,\lambda_3$ are not all zero follows by reasoning in analogy with the previous isometry class. In order that $\ric^B-\lambda\Id \in \Der(\mathfrak{g})$ it is necessary that $\lambda_4=0$ and exactly two among $\lambda_1,\lambda_2,\lambda_3$ vanish. Every choice leads to a contradiction with the equations in \eqref{algebraic soliton}, indeed:
		\begin{itemize}
			\item if $\lambda_1=\lambda_3=0$, then the computations of $\langle\ric^B(X_i),X_i\rangle$ with $i=4,3,2$ yield:
			\begin{align*}
				\lambda &= -\frac{\left(b_{33}-1\right)^2\left(b_{33} +1\right)^2b_{44}^2}{2 b_{33}^{2}}-\frac{\lambda_2^2}{2}, \\
				m_{33} &= -\frac{  2 b_{11}^{2} b_{33}^{2} b_{44}^{2}-2b_{44}^2b_{11}^{2} + b_{33}^{4}}{2 b_{11}^{2} b_{33}^{2}},\\
				\lambda_2&= \pm \frac{b_{44}\sqrt{2-2b_{33}^4}}{b_{33}}.
			\end{align*}
			Note that $2-2b_{33}^4>0$ since $b_{33}\in (0,1)$, so the square root makes sense. But then, the second equation in \eqref{algebraic soliton} forces $2b_{11}^2b_{44}^2 + 3b_{33}^2=0$ which is not possible.
			
			\item if $\lambda_1=\lambda_2=0$ we reason exactly as before, finding $\lambda, m_{33}$ from $\langle\ric^B(X_i),X_i\rangle$ with $i=4,3$. But now, for $i=2$ we get that $2b_{33}^4b_{44}^2 - b_{33}^2\lambda_3^2 - 2b_{44}^2=0$ and we cannot solve this equation for $\lambda_3$, since $b_{33}\in (0,1)$.
			
			\item if $\lambda_2=\lambda_3=0$ we follow the same reasoning of the previous two cases, computing finding $\lambda, m_{33}$ from $\langle\ric^B(X_i),X_i\rangle$ with $i=4,3$. Then, $i=2$ leads to the equation $b_{44}^2(b_{33}^4-1)=0$ which does not have any real solution.
		\end{itemize}
		In any case, we do not find any algebraic GRS in this isometry class.
		
		Let us consider now the second isometry class. In this case the Ricci operator is represented by the following matrix, with respect to the basis $\{X_1,\dots,X_4\}$:
		\begin{equation*}
			\ric=\frac{1}{2 b_{11}^{2}}
			\begin{bmatrix}
				b_{44}^{2} b_{12}^{2}+1 & -b_{11} b_{44}^{2} b_{12} & 0 & 0 \\[1.5ex]
				-b_{11} b_{44}^{2} b_{12} & -1 & 0 & b_{44} b_{12} \\[1.5ex]
				0 & 0 & -b_{44}^{2} b_{12}^{2}-1 & 0 \\[1.5ex]
				0 & b_{44} b_{12} & 0 & -b_{44}^{2} b_{12}^{2}
			\end{bmatrix}
		\end{equation*}
		and a generic derivation has the form
		\begin{equation*}
			D= \begin{bmatrix}
				2 m_{33} & m_{12} & m_{13} & m_{14} \\[6pt]
				0 & m_{33} & -m_{32} & b_{44}(b_{11}m_{12}- b_{12}m_{33}) \\[6pt]
				0 & m_{32} & m_{33} & b_{44}(b_{11}m_{13}- b_{12}m_{32})\\[6pt]
				0 & 0 & 0 & 0 
			\end{bmatrix}
		\end{equation*}
	so that we only obtain nontrivial symmetric derivations for $m_{12}=m_{13}=m_{14}=m_{32}=0$ and $b_{12}=0$.
	
	Assume that $\lambda_1=\lambda_2=\lambda_3=0$. Then, in order that $\ric^B-\lambda \Id \in \Der(\mathfrak{g})$ we need that
	\begin{align*}
	  \lambda&=0,\\ m_{33}&=-\frac{1}{2b_{11}^2}-\frac{\lambda_4^2}{2}.
	\end{align*}
	 But now, computing $\langle \ric^B(X_1), X_1\rangle$ and using that $\ric^B-\lambda\Id \in \Der(\mathfrak{g})$ we obtain that
	 \begin{equation*}
	 	3+b_{11}^2\lambda_4^2=0
	 \end{equation*}
	 which is impossible.
		
		Therefore, assume that at least one among $\lambda_1,\lambda_2,\lambda_3$ is nonzero. This forces $\lambda_4=0$ and 
		\begin{align*}
			\lambda &= - \frac{1}{2}\left(\lambda_1^2 + \lambda_2^2 + \lambda_3^2\right), \\[6pt]
		m_{33} &= -\frac{1}{2b_{11}^2}+\frac{\lambda_3^2}{2}.
		\end{align*}
		The second equation in \eqref{algebraic soliton} now implies $\lambda_1=0$. Moreover, by computing $\langle \ric^B(X_1), X_1\rangle$ and using that $\ric^B-\lambda\Id \in \Der(\mathfrak{g})$ we deduce that $\lambda_3=\pm \sqrt{\frac{3}{2b_{11}^2}}$ and this forces $\lambda_2=0$ from $\langle\ric^B(X_2),X_3 \rangle$. Now, by computing $\langle \ric^B(X_2), X_2\rangle$ we see that it is impossible to have $\ric^B-\lambda\Id \in \Der(\mathfrak{g})$, since we would need $-\frac{3}{4b_{11}^2}=0$.
		
		As a result no algebraic GRS arise from this isometry class.
	\end{proof}

The examples arising from this classification are summarized in Table \ref{table3} below.

\begin{table}[ht]
	\centering
	\small
	\setlength{\tabcolsep}{4pt}
	\renewcommand{\arraystretch}{1.25}
	\caption{Four-dimensional solvable non-nilpotent algebraic generalized Ricci solitons with $H\neq 0$
		on unimodular Lie algebras.}
	\label{table3}
	\begin{tabular}{c|c|c|c|c}
		\hline
		Lie algebra & Existence & $H$ & $D$ & $\lambda$ \\
		\hline
		$\mathfrak{r}_{4,-\frac12,-\frac12}$
		&
		YES
		&
		$ 2X^{123}$
		&
		$4\,\mathrm{diag}(1,1,1,0)$
		&
		$-6$
		\\[3pt]
		
		$\mathfrak{e}(1,1)\oplus\mathbb R$
		&
		YES
		&
		$\displaystyle \frac{2}{\sqrt3}X^{123}$
		&
		$\displaystyle \frac43\,\mathrm{diag}(1,1,1,0)$
		&
		$-2$
		\\[6pt]
		
		$\mathfrak{r}'_{4,-2\mu,\mu}$, $\mu>0$
		&
        YES
		&
		$2\mu X^{123}$
		&
		$\displaystyle 4\mu^2\,\mathrm{diag}(1,1,1,0)$
		&
		$-6\mu^2$
		\\[6pt]
		
		$\mathfrak{r}_{4,\mu,-1-\mu}$, $\mu>1$
		&
		YES
		&
		$\displaystyle
		\frac23
		\sqrt{3(\mu^2+\mu+1)}\,X^{123}$
		&
		$\displaystyle
		\frac43(\mu^2+\mu+1)\,
		\mathrm{diag}(1,1,1,0)$
		&
		$\displaystyle -2(\mu^2+\mu+1)$
		\\[8pt]
		
		$\mathfrak{r}_{4,-\frac12}$
		&
		NO
		&
		---
		&
		---
		&
		---
		\\[3pt]
		
		$\mathfrak{e}(2)\oplus\mathbb R$
		&
		NO
		&
		---
		&
		---
		&
		---
		\\[3pt]
		
		$\mathfrak{d}_4$
		&
		YES
		&
		$ X^{123}$
		&
		$\mathrm{diag}(2,1,1,0)$
		&
		$-2$
		\\[3pt]
		
		$\mathfrak{d}'_{4,0}$
		&
		NO
		&
		---
		&
		---
		&
		---
		\\
		\hline
	\end{tabular}
\end{table}
	\printbibliography
	
\end{document}